\newtheorem{theorem}{Theorem}
\newtheorem{lemma}{Lemma}
\newtheorem{remark}{Remark}
\newtheorem{definition}{Definition}
\newcommand{\comment}[1]{}        
\newcommand{\comm}[1]{}           
\DeclareMathOperator*{\sinc}{sinc}
\def\Xint#1{\mathchoice
{\XXint\displaystyle\textstyle{#1}}%
{\XXint\textstyle\scriptstyle{#1}}%
{\XXint\scriptstyle\scriptscriptstyle{#1}}%
{\XXint\scriptscriptstyle\scriptscriptstyle{#1}}%
\!\int}
\def\XXint#1#2#3{{\setbox0=\hbox{$#1{#2#3}{\int}$}
\vcenter{\hbox{$#2#3$}}\kern-.5\wd0}}
\def\tildeint{\Xint-}
\begin{document}
\title{On the output of nonlinear systems excited by discrete prolate spheroidal sequences}

\author{Kyle~Q.~Lepage
        and~ShiNung Ching,~\IEEEmembership{Member~IEEE}
\thanks{K. Q. Lepage is with the Department
of Mathematics and Statistics, Boston University, Boston, MA, USA e-mail: (lepage@math.bu.edu).}
\thanks{S. Ching is with the Department of Electrical and Systems Engineering, Washington 
University in St. Louis, St. Louis, MO, USA email (shinung@ese.wustl.edu).}}

\markboth{Preprint submitted to arXiv}%
 {Egapel \MakeLowercase{\textit{et al.}}: Reduced Mean-Square Error 
   Quadratic Inverse Spectrum Estimator}
\maketitle 
 \begin{abstract}
   The discrete prolate spheroidal sequences (DPSSs) -- a set of optimally bandlimited sequences with unique properties --  
are important to applications in both science and engineering. 
In this work, properties of nonlinear system response due to DPSS excitation are reported.
In particular, this output is
shown to be approximately orthogonal after passing through a nonlinear,
multiple-input multiple-output system with memory under quite general conditions.  This work
quantifies these conditions in terms of constraints upon the higher-order generalized transfer
functions characterizing the Volterra expansion of a MIMO system, the Volterra order of the system, 
and the DPSS bandwidth parameter $W$ and time-bandwidth parameter $NW$.  
The approximate system output orthogonality allows
multiple-input, multiple-output parameter identification of edge structure 
in interconnected nonlinear systems using simultaneous, DPSS excitation.
This narrowband method of system identification is particularly appealing when compared to 
classical broadband system excitation in sensitive, neural engineering applications involving
electrical stimulation of multiple brain regions.
{\color{black} Through the comparison of inner-product  and kernel-based narrowband detectors, the 
utility of this work is demonstrated when identifying narrowband system response of 
a third-order Volterra system from noisy observations.}



\comment
{
We show that the 
prolate spheroidal wave functions form an effective excitation in this 
setting due to their frequency concentration and orthogonility.  Specifically, we demonstrate 

To demonstrate the utility of these properties, the theory is applied to 
To demonstrate the efficacy of the excitation, we provide an 
example of parameter identification in a nonlinear, biophysical model of multi-site neural stimulation.

In this work,  
Here, we develop further orthogonality properties of these functions 
for the purpose of multiple-input, multiple-output parameter identification of edge structure 
in interconnected nonlinear systems.  Direct motivation is taken from neural engineering applications 
involving electrical stimulation of multiple brain regions.  The delicate nature of this application makes 
classical identification paradigms involving broadband excitation inappropriate.  We show that the 
prolate spheroidal wave functions form an effective excitation in this 
setting due to their frequency concentration and orthogonility.  Specifically, we demonstrate 
that the functions remain approximately orthogonal, even after tranformation through a 
nonlinear system under quite general conditions.  We quantify these conditions using 
a volterra expansion of the MIMO system.  
The key conditions center on the order of the 
system, the extent of system memory, and the prolate function bandwidth 
parameter $W$.  To demonstrate the efficacy of the excitation, we provide an 
example of paraemter identification in a nonlinear, biophysical model of multi-site neural stimulation.

{{\bf OLD:}
\color{red}
The prolate spheroidal functions -- a set of special functions with unique properties --
are important to a number of areas in both science and engineering.  
In this work, further
orthogonality properties of the prolate spheroidal functions are reported.  In particular,
these functions are shown to be approximately orthogonal after passing through  a nonlinear,
multiple-input multiple-output system with memory under quite general conditions.  This work
quantifies these conditions in terms of constraints upon the higher-order transfer
functions characterizing the volterra expansion of the MIMO system, the order of the system, the
extent of system memory, the prolate function bandwidth parameter $W$, and
the derivative of the spheroidal prolate wave functions.
These effects are detailed and quantified mathematically and provide a general framework upon which
to base application.  The paper ends with an application to two problems in engineering.
}
} 


 \end{abstract}
\section{Introduction}
  \label{intro}
\label{sect:intro}
Characterization of dynamical systems 
is an important aspect of many areas of science and engineering \cite{Astrom1971123,box2005statistics}.  
Many inputs have been employed for system identification.  Perhaps the two
most prominent input categories  are (i) uncorrelated broadband input, or (ii) narrowband input \cite{Haber1990651}.
Broadband inputs distribute input signal energy evenly over the entire frequency domain \cite{lee1965measurement,Astrom1971123}, 
while narrowband inputs focus signal energy into a small frequency interval 
\cite{victor1979nonlinear,worden1997harmonic,bedrosian1971output,evans1992design,schoukens1998parametric,schoukens2005identification}.

Situations exist where the question is simply, ``If a narrowband stimulus is applied as input, is there a narrowband response?'' \cite{worden1997harmonic}. This situation
arises in neuroscience in both the experimental and clinical settings.  In the clinical 
setting 
there is a risk that stimulation will alter human cognitive ability   
\cite{Boggio2005,perlmutter2006deep,Moscrip2006,DeRidder2007,gallentine2009intraoperative,Vanneste2010,Veniero2011}.
In the experimental setting, stimulation may change the neural system under study, rendering subsequent inference invalid \cite{robertson2003studies}, or cause discomfort \cite{mcfadden2011reducing}.  In all cases, it is desirable to minimize the input energy when inferring, or producing, narrowband response.  

In the growing field of network science  it is important to estimate the connectivity between the inputs and outputs of complicated systems 
\cite{Wasserman1994,barabasi1999emergence,amaral2000classes}.  Here knowledge of the network interconnectivity can be informative while
detailed knowledge of the system dynamics is prohibitively difficult to obtain.  
{\color{black} In neuroscience this literature relates network properties to cognitive state, experimental 
condition, and pathology \cite{chouinard2003modulating,Massimini15052007,keeser2011prefrontal,Greicius2003,Matsumoto01102004}.}

When attempting to detect a narrowband connection between system input and output, it is necessary to 
separate the relative influences of the inputs upon any one output.
Generally, the strategies employed have resorted to the excitation of one input channel at a time {\color{black} \cite{matsumoto2004functional,matsumoto2007functional,shafi2012exploration,lepage2013}}.

{\color{black}In this paper, a theoretical characterization of the response of a Volterra MIMO system to DPSS input is 
provided. This characterization enables a strategy based upon multiple, simultaneous stimulation.
The specific contributions are:} 
\begin{enumerate}
\item \label{contrib:1} The discovery that each of the DPSSs, once passed through a nonlinear Volterra MIMO system, can be approximated by a quadratic generalized frequency response
Volterra system representation \cite{schetzen2006volterra} in the frequency domain, and by a linearly-transformed version of the input in the time domain.
\item \label{contrib:3} The verification that the DPSSs, passed through a nonlinear Volterra  MIMO system, remain approximately orthogonal. 
\item  \label{contrib:4}The conditions under which Contributions \ref{contrib:1} and \ref{contrib:3} are valid, along with a quantification of approximation error.
\item  An inner product based identification scheme exploiting Contributions \ref{contrib:1} - \ref{contrib:4} to estimate the 
  linear narrowbannd connectivity of a nonlinear Volterra MIMO system.
\end{enumerate}
Contribution 4  makes use of an inner-product detector to separate the relative influence upon the system output due to multiple 
simultaneous narrowband MIMO system inputs.  This facilitates simultaneous network stimulation and connectivity inference. 

The DPSSs and their continuous time analogues, the prolate spheroidal wave functions, are used in many areas.  Example areas and publications include: time-series analysis \cite{rodbell199915,thomson2000multitaper}, signal processing \cite{thomson1982spectrum,bronez1992performance,he2010canonical,lepage2014reduced,davenport2012compressive}, communication engineering \cite{1433150,sejdic2008channel}, theoretical physics \cite{flammer1957spheroidal,li2004spheroidal}, and
control \cite{li2011ensemble}.  

Orthogonality plays a prominent role in system identification.  In \cite{lee1965measurement} the Wiener G-functionals result from a modified
Gram-Schmidt orthogonalization procedure, facilitating cross-correlation based system identification.  
In \cite{korenberg1988exact}, the kernels are orthogonalized with respect to the observed input.  In \cite{perez_sysid_legendre,Karanam1978,PARASKEVOPOULOS1978, paraskevopoulos1983chebyshev, van2005system} the Volterra kernels are
expanded in terms of an orthogonal basis, allowing the conversion of a system of integrals to a linear-in-parameters algebraic equation.  After conversion, 
parameters can be estimated by least squares.  Standard bases for these expansions are provided by the Laguerre, Walsh, block-pulse, and Chebyshev functions (a generalized basis
is considered in \cite{van2005system}). 



{\color{black}The design of narrowband excitation has received considerable attention: \cite{victor1979nonlinear,worden1997harmonic,bedrosian1971output} 
\cite{evans1992design,schoukens1998parametric,schoukens2005identification,Carassale_volterra2010}.
}
In \cite{victor1979nonlinear,worden1997harmonic,bedrosian1971output,Carassale_volterra2010} full system identification is addressed.
{\color{black} In this work, 
  motivated by 
  the identification of linear narrowband response,
  emphasis is placed upon characterizing Volterra MIMO response to DPSS input.
} 
In \cite{evans1992design,schoukens1998parametric,schoukens2005identification} models are considered where the higher-order nonlinear system response
is small and additive.  This differs from the proposed work in that (i) sinusoidal input with random phases are used as opposed to the optimal in-band energy concentrated
DPSSs and (ii) the {\color{black} nonlinear system response in the proposed work is suppressed by the use of DPSS as input.}

{\color{black} By characterizing the Volterra MIMO response to DPSS input, progress is made towards developing improved methods
  of identifying linear narrowband MIMO Volterra response.
To the best of our knowledge this is the first characterization of the response of a MIMO Volterra system to DPSS input,
and the first use of the DPSSs for the purpose of linear, narrowband system identification of a nonlinear MIMO Volterra system.}
 

The remainder of this paper is organized as follows. Following a review of relevant properties of the DPSS in \ref{sect:dpss}, and nonlinear 
MIMO systems with memory in \ref{sect:mimo}, the main results of the paper are 
presented in \ref{sect:result} \& \ref{sect:ortho}.  In \ref{sect:sim}, DPSS excitation is used, in a comparison with sum of sinusoid and 
uncorrelated input to detect differences between the narrowband response of two $3^{\rm rd}$ order Volterra systems.  
The paper ends with a discussion in \ref{sect:disc}.


\section{Background \& Preliminaries}\label{bg}
For convenience Table \ref{tbl:sys_funcs} summarizes the key parameters used to establish the theory developed in 
Sections \ref{sect:result}, \ref{sect:ortho}.
\begin{table}

	\centering

		\begin{tabular}{ | l | l | p{5.2cm} |}

			\hline
      Eqn. & Symbol & Description  \\ \hline \hline & & \\
      \eqref{eqn:w} & $W \in \mathbb{R}$ & The half-bandwidth of the DPSS energy-concentrated frequency interval. \\ \hline & & \\
      \eqref{eqn:lambda} & $\lambda_{min}$ & The minimum of the input DPSS in-band energy ratios. \\ \hline & & \\
      \eqref{eqn:key_gamma1} &  $\Gamma^{(Q)}_{m,*}$ & Suprema of the $Q^{\rm th}$ order system response. \\ & & \\ \hline & &  \\
      \eqref{eqn:key_V*} & $V_{M,*}$ & Suprema of the Magnitudes of the input DPSWFs \\ \hline & & \\
      \eqref{eqn:in-band-sys-rem} & $\Gamma^{(Q)'}_{m,*}$ & Suprema of the $Q^{\rm th}$ order system repsonse's in-band Taylor remainder. \\ \hline & & \\
      \eqref{eqn:gamma_**} & $\Gamma_{m,**}^{(Q)}( {\bf 0}, f )$ & Suprema of the $Q^{\rm th}$ order input-DC system responses at output frequency $f$. \\ \hline & & \\
		\eqref{eqn:gamma_**_prime} & $\Gamma^{(1)'}_{m,**}$ & Suprema of the derivative of the $1^{\rm st}$ order responses with respect to frequency. \\ \hline

		\end{tabular}\label{tbl:sys_funcs}

	\caption{Key Parameters}

	\label{tab:KeyParameters}

\end{table}

\subsection{Discrete Prolate Spheroidal Sequences (DPSSs)}
  \label{sect:dpss}
  The zeroth-order discrete prolate spheroidal sequence (DPSS), or Slepian sequence \cite{SlepianV}, $ v_t^{(0)}$, is the infinite, real-valued sequence 
  that is index-limited to $[0,N-1]$ and possesses the maximum fractional in-band energy
  concentration of all such sequences \cite{SlepianV}.\footnote{Index-limited to $[0,N-1]$ means that
   any  
   sequence element  
 outside of the index-set $\left\{0, \ 1, \ \ldots, \ N-1\right\}$ is zero.}
  \begin{equation}
    v_t^{(0)} = \arg \max_{\mathbb{V}_N} \frac{\int_{-W}^{W} \left| V(f)\right|^2 df}{\int_{-\frac{1}{2}}^\frac{1}{2} \left| V(f) \right|^2 df} .
    \label{eqn:w}
  \end{equation}
  where $\mathbb{V}_N$ denotes the space of all infinite, real-valued sequences index-limited to $[0, N-1]$, 
$W<\frac{1}{2}$ denotes the half-bandwidth  (where, without loss of generality, a sampling period of 1 is assumed) and 
$V(f)$ is the discrete Fourier transform of an element of $\mathbb{V}_N$.  
The higher-order 
DPSSs are the maximally in-band energy concentrated sequences that are mutually orthogonal and 
are orthogonal to $ v_t^{(0)}$.\footnote{Note, $v^{(0)} \in \ell^2(\infty)$.  By truncation, $v^{(0)}$ defines $\tilde{v}^{(0)} \in \ell^2(N)$. By
  discrete Fourier transform $v^{(0)}$ defines the zeroth order DPSWF, $V \in C^2(-\frac{1}{2}, \frac{1}{2})$, and $\tilde{V} \in C^2(-W,W)$ is defined by the restriction of $V$
  to $(-W,W)$.  Each of these elements satisfy orthogonality relations with respect to the canonical inner-product associated with the 
Hilbert space to which they belong:  $\ell^2(\infty)$, $\ell^2(N)$, $C^2(-W,W)$ and $C^2(-\frac{1}{2},\frac{1}{2})$.}
  The $ k^{th}$ DPSS, $v_t^{(k)}$, satisfies eigenvalue/eigenvector equations in both the time and frequency domains \cite{Slepian1965,Slepian1961,Slepian1978}.  
  The discrete Fourier transform of the $k^{th}$ DPSS   is  $V_k(f)$, the $k^{th}$ discrete prolate spheroidal wave
  function (DPSWF), 
  \begin{eqnarray}
    V_k(f) &=& \sum_{t = -\infty}^\infty v_t^{(k)} e^{-i 2 \pi f t} \ , \\
           &=& \sum_{t=0}^{N-1} v_t^{(k)} e^{-i 2 \pi f t} \ .
  \end{eqnarray}
  The $k^{th}$ DPSWF 
   satisfies the frequency domain eigenfunction equation 
  \begin{equation}
    \lambda_k V_k(f) = \int_{-W}^W D_N(f-f') \ V_{k}(f') \ df' \ .
    \label{eqn:lambda}
  \end{equation}
    Here $D_N(f)$ is a Dirichlet (`sinc')-type kernel,
    \begin{equation}
       D_N( f ) =  
      \frac{ \sin{ N \pi f }}{\sin{ \pi f }} \  e^{-i \pi f (N-1)}  \ ,
    \end{equation}
    and
  $\lambda_k$ is the eigenvalue associated with the $k^{th}$ DPSS. 
  The $ k^{th}$ eigenvalue is near one (i.e., $\lambda_k\approx1$) for $k$ less than approximately $2NW$ (twice the dimensionless 
      time-bandwidth product).  The eigenvalues, $\lambda_k$, $k = 1, \ 2, \ldots$  monotonically decrease with increasing $k$
       and are equal to the fraction of the DPSS
  energy within the $ (-W, W)$ band of frequencies
  \cite{Thomson:1982,SlepianV}.  That is 
  \begin{equation}\label{eqn:inbandconc0}
  \int_{-W}^W \left| V_{k} \right|^2 \ df' = \lambda_k \ .
  \end{equation}
  Thus the in-band signal energy of $v_t^{(k)}$ is $\lambda_k$.
  The DPSWFs are in-band orthogonal, i.e.,
  \begin{equation}
      \int_{-W}^W V_{k}(f) V^*_{k'}(f) \ df = \lambda_k \delta_{k,k'} \ .
      \label{eqn:relevant}
   \end{equation}
  An additional inequality used to bound the in-band inner-product of two DPSWFs is
  \begin{equation}\label{eqn:inbandconc}
     \int_{-W}^W \left| V_{k'}(f') V_k^*(f')\right| df' \leq \sqrt{ \lambda_{k'} \lambda_k } \leq 1 \ .
  \end{equation}
  The DPSSs are functions of two parameters: the length, $N$, of the signal, and the user specified half-bandwidth parameter, $W$.  
  For convenience, these dependencies are implied. 

\subsection{Volterra Expansion of Nonlinear MIMO System} \label{sec:content1}
  \label{sect:mimo}
Let $\mathcal{H}$ be a nonlinear, time-invariant system formulated in discrete time with $M$ inputs and $M'$ outputs.  Assume that $\mathcal{H}$ admits a Volterra expansion \cite{Bussgang1974} such that the $m^{th}$ output, $y_{m,t}$, evaluated at time-index $t$, can be expressed as,
\begin{eqnarray}
  \label{eqn:volterra_td0}
  \lefteqn{y_{m,t}  =}&& \hspace{0.5cm} y_{m}^{(o)} + \nonumber \\
    && \sum_{m'=1}^M \sum_{t'=1}^\infty \gamma^{(1)}_{m,m',t'} \ u_{m',t-t'} \ + \nonumber \\
          && \sum_{m_1,m_2=1}^M \sum_{t_1,t_2=1}^\infty \gamma^{(2)}_{m,m_1,m_2,t_1,t_2} \ u_{m_1,t-t_1} u_{m_2,t-t_2} \ + \nonumber \\
          && \hspace{2cm} \ldots + \nonumber \\
          && \sum_{m_1,\ldots,m_Q=1}^M \sum_{t_1,\ldots,t_Q=1}^\infty \gamma^{(Q)}_{m,m_1,\ldots,m_Q,t_1,\ldots, t_Q} \ \prod_{j=1}^Q u_{m_j,t-t_j}  \ . \nonumber \\
\end{eqnarray}
Here, $\gamma^{(a)}_{m,m_1, \ldots, m_a, t_1, \ldots, t_a} \in \mathbb{R}$, $\left| \gamma^{(a)}_{m,m_1, \ldots, m_a, t_1, \ldots, t_a} \right| < \infty$,
is a finite, order $a$ volterra kernel.  It  relates the product of $a$ inputs from channels $m_1$ to $m_a$ 
to the system output, $y_{m,t}$, on channel $m$ at time-index $t$.  When forming this product input channel $    m_j$ is 
evaluated at time-index $t_j$, for $    j = 1, \ldots, \ a$.
The number of input channels $M$ need not equal the
number of output channels $M'$.  The system, $\mathcal{H}$, is completely characterized by the collection of 
Volterra kernels $\gamma^{(j)}_{m,m_1, \ \ldots,\ m_j, \ t'}$.

To facilitate development, (\ref{eqn:volterra_td0}) is re-written in terms of vectors.  Specifically,
\begin{eqnarray}
  \label{eqn:volterra_td}
  \lefteqn{y_{m,t}  =}&& \hspace{0.5cm} y_{m}^{(o)} + \nonumber \\
    && \sum_{m'=1}^M \sum_{t'=1}^\infty \gamma^{(1)}_{m,m',t'} \ u_{m',t-t'} \ + \nonumber \\
          && \sum_{{\bf m}_2={\bf 1}_2}^M \sum_{{\bf t}_2={\bf 1}_2}^\infty \gamma^{(2)}_{m,{\bf m}_2,{\bf t}_2} \ u_{m_1,t-t_1} u_{m_2,t-t_2} \ + \nonumber \\
          && \hspace{2cm} \ldots + \nonumber \\
          && \sum_{{\bf m}_Q = {\bf 1}_Q}^M \sum_{{\bf t}_Q = {\bf 1}_Q }^\infty \gamma^{(Q)}_{m,{\bf m}_Q,{\bf t}_Q} \ \prod_{j=1}^Q u_{m_j,t-t_j}  \ , \nonumber \\
\end{eqnarray}
where ${\bf m}_Q = [ m_1, \ m_2 \ , \ldots \ ,\ m_Q ]^T$, ${\bf t}_Q = [ t_1, \ t_2 \ , \ldots \ t_Q ]^T$, and ${\bf 1}_Q$ is a $Q$ dimension vector
of ones. For convenience, in the following $y_m^{(0)}$ is set equal to zero {\color{black} (see Remark \ref{rem:DC}).}
In Section \ref{sect:result} investigation focuses upon the nature of $y_{m,t}$ when
the input to the system is specified to be the DPSSs.  This investigation is facilitated by the 
frequency domain representation of (\ref{eqn:volterra_td}).  Following the development in \cite{george1959continuous,Lang2007805},
in Appendix \ref{app:gfrf} it is shown that:
\begin{eqnarray}
  \lefteqn{Y_m(f) =} && \hspace{1cm} \sum_{q=1}^Q T_{m,q}(f)  \ , \hspace{5cm} 
  \label{eqn:volterra_fd}
\end{eqnarray}
where
\begin{eqnarray}
  \lefteqn{T_{m,1}(f) =}  && \hspace{1cm} \sum_{m'=1}^M \Gamma^{(1)}_{m,m'}(f) \ U_{m'}(f) \ , \hspace{3.2cm} 
\end{eqnarray}
\begin{eqnarray}
  \lefteqn{T_{m,2}(f) =}\hspace{1cm} &&
    \sum_{m_1,m_2=1}^M \int_{-\frac{1}{2}}^\frac{1}{2} \Gamma^{(2)}_{m,m_1,m_2}(f_1, f-f_1) \ \times \hspace{1.0cm}  \nonumber \\
          && \hspace{2.5cm} U_{m_1}(f_1) U_{m_2}(f-f_1) \ df_1 \ , \nonumber \\
\end{eqnarray}
and
\begin{eqnarray}
  \lefteqn{T_{m,Q}(f) =}&& \nonumber \\
            && \sum_{ {\bf m}_Q = {\bf 1} }^M \int_{-\frac{1}{2}}^\frac{1}{2} 
                        \Gamma^{(Q)}_{m, {\bf m}_Q}\left( {\bf f}_{Q-1}, f - {\bf f}_{Q-1}^T {\bf 1}_{Q-1} \right) \nonumber \\
        && \hspace{.1cm} U_{m_Q}\left( f - {\bf f}_{Q-1}^T {\bf 1}_{Q-1} \right) 
              \prod_{j=1}^{Q-1} U_{m_j}\left( f_j \right)  \ d{\bf f}_{Q-1} \ . \nonumber \\
              \label{eqn:tmqf}
\end{eqnarray}
Here, ${\bf f}_{Q} = \left[ f_1 \ f_2 \ \ldots \ f_Q \right]^T$, and 
the Volterra kernels are specified to be causal.  The discrete Fourier transform is taken over all time.  Specifically: 
\begin{equation}
  U(f)  = \sum_{t=-\infty}^\infty u_t e^{-i 2 \pi f t} \ .
\end{equation}
As in the time domain, the collection of generalized
frequency response functions, $\Gamma^{(j)}_{m,{\bf m}_j}( {\bf f}_{j-1}, f - {\bf f}_{j-1}^T {\bf 1}_{j-1} )$,
completely specify the nonlinear, time-invariant MIMO system $\mathcal{H}$.

\section{Suppression of Higher-Order MIMO Response to DPSS Input} \label{sec:content2}
  \label{sect:result}
The main results center on investigating the orthogonality of the outputs of the nonlinear system $\mathcal{H}$ when its inputs are set to be the DPSSs.  
Under conditions to be described,
  the higher-order system responses to DPSS inputs are effectively suppressed.  Recall the Volterra expansion of $\mathcal{H}$ (\ref{eqn:volterra_td}) and
set the $k^{th}$ input to $v^{(k)}_t$, the $k^{\rm th}$ order DPSS for $k \leq K$. 
Here, $K$ is chosen such that $v^{(k)}_t$, $k \leq K$ possesses energy concentration within $(-W,W)$ near one.  From \eqref{eqn:inbandconc}, this is equivalent to
specifying that the first $K$ DPSS eigenvalues, $\lambda_k$, are as close to $1$ as possible. Thus, of all DPSSs index-limited to the
interval $[0,N-1]$ with a given time-bandwidth
parameter $NW$, these sequences are the most energy concentrated DPSSs within the frequency interval $(-W,W)$.
When the number of input channels $M$ is greater than $K$, the remaining $M-K$ channel inputs are set to zero.
In this way, each input is specified to be a strongly in-band energy-concentrated DPSS, or is otherwise set
to zero and does not contribute to the output.  Consider the $Q^{\rm th}$ order Volterra kernel frequency response, $T_{m,Q}(f)$, 
due to this DPSS input:
\begin{eqnarray}
  \lefteqn{T_{m,Q}(f) =}&& \nonumber \\
            && \sum_{ {\bf m}_Q = {\bf 1} }^M \int_{-\frac{1}{2}}^\frac{1}{2} 
                        \Gamma^{(Q)}_{m, {\bf m}_Q}\left( {\bf f}_{Q-1}, f - {\bf f}_{Q-1}^T {\bf 1}_{Q-1} \right) \nonumber \\
        && \hspace{.1cm} V_{m_Q}\left( f - {\bf f}_{Q-1}^T {\bf 1}_{Q-1} \right) 
              \prod_{j=1}^{Q-1} V_{m_j}\left( f_j \right)  \ d{\bf f}_{Q-1} \ . \nonumber \\ \label{QorderVolterra}
\end{eqnarray}
Decompose \eqref{QorderVolterra} into in-band and out-of-band components,
\begin{equation}
 T_{m,Q}(f) = T^{(i)}_{m,Q}(f) + T^{(o)}_{m,Q}(f) \ , 
\end{equation}
where
\begin{eqnarray}
  \lefteqn{T^{(i)}_{m,Q}(f) =}&& \nonumber \\
 && \sum_{ {\bf m}_Q = {\bf 1} }^M \int_{-W  }^W
                        \Gamma^{(Q)}_{m, {\bf m}_Q}\left( {\bf f}_{Q-1}, f - {\bf f}_{Q-1}^T {\bf 1}_{Q-1} \right) \nonumber \\
        && \hspace{.1cm} V_{m_Q}\left( f - {\bf f}_{Q-1}^T {\bf 1}_{Q-1} \right) 
              \prod_{j=1}^{Q-1} V_{m_j}\left( f_j \right)  \ d{\bf f}_{Q-1} \ , \nonumber \\
\end{eqnarray}
and
\begin{eqnarray}
  \lefteqn{T^{(o)}_{m,Q}(f) =}&& \nonumber \\
 && \sum_{ {\bf m}_Q = {\bf 1} }^M \tildeint_{-\frac{1}{2} }^\frac{1}{2}
                        \Gamma^{(Q)}_{m, {\bf m}_Q}\left( {\bf f}_{Q-1}, f - {\bf f}_{Q-1}^T {\bf 1}_{Q-1} \right) \nonumber \\
        && \hspace{.1cm} V_{m_Q}\left( f - {\bf f}_{Q-1}^T {\bf 1}_{Q-1} \right) 
              \prod_{j=1}^{Q-1} V_{m_j}\left( f_j \right)  \ d{\bf f}_{Q-1} \ . \nonumber \\
\end{eqnarray}
The symbol $\tildeint_{-\frac{1}{2}}^\frac{1}{2} \ dx$ is used to specify the sum of two integrals.  It is 
equal to $\int_{-\frac{1}{2}}^{-W} \ dx \ + \ \int_W^{\frac{1}{2}} \ dx$.
In Appendix (\ref{app:a}) it is shown that,
\begin{eqnarray}
  \lefteqn{ \left| T_{m,Q}(f) - T^{(i)}_{m,Q}(f) \right| \leq} \hspace{4cm} A_{m,M,Q}(\lambda_{min}, V_{M,*}, \Gamma^{(Q)}_{m,*} ) \ ,\nonumber\\
 \label{eqn:bnd_a}  
\end{eqnarray}
where 
\begin{eqnarray}
  \lefteqn{A_{m,M,Q}(\lambda_{min}, V_{M,*}, \Gamma^{(Q)}_{m,*} ) =}&&\nonumber \\
    && \hspace{1cm}  \left( 1 - \lambda_{min} \right)^{(Q-1)/2} V_{M,*} M^Q \Gamma_{m,*}^{(Q)} \ . \label{Qorderfirstbound}
\end{eqnarray}
Here $V_{M,*}$ and $\Gamma_{m,*}^{(Q)}$ are respectively, bounds on the magnitude of the DPSWF \eqref{eqn:lambda} and on the magnitude of the $Q^{\rm th}$ order
Volterra kernel frequency response \eqref{QorderVolterra}.  These bounds are suprema over the appropriate domains:
\begin{eqnarray}
  \lefteqn{\Gamma^{(Q)}_{m,*} =} && \nonumber \\
    && \sup\limits_{\substack{ {\bf m}_Q \in \left\{1,2,\ldots,M\right\}^Q \\ {\bf f}_{Q-1} \in (-\frac{1}{2},\frac{1}{2})^{Q-1} \\ f \in (-\frac{1}{2},\frac{1}{2})} } \bigg| \Gamma^{(Q)}_{m, {\bf m}_Q}\left( {\bf f}_{Q-1}, f - {\bf f}_{Q-1}^T {\bf 1}_{Q-1} \right) \bigg| \ , \nonumber \\
  \label{eqn:key_gamma1}
\end{eqnarray}
and
\begin{eqnarray}
  V_{M,*} &=& \sup\limits_{\substack{      m_Q \in \left\{1,2,\ldots,M\right\}   \\ {\bf f}_{Q-1} \in (-\frac{1}{2},\frac{1}{2})^{Q-1} \\ f \in (-\frac{1}{2},\frac{1}{2})} } 
  \bigg| V_{m_Q}\left( f - {\bf f}_{Q-1}^T {\bf 1}_{Q-1} \right) \bigg| \ . \nonumber \\
  \label{eqn:key_V*}
\end{eqnarray}
From \eqref{Qorderfirstbound}, when the product $V_{M,*} \Gamma_{m,*}^{(Q)}$ is small relative to $M^Q  \left( 1 - \lambda_{min} \right)^{(Q-1)/2}$ the $Q^{\rm th}$ order
Volterra frequency response is approximated by $T^{(i)}_{m,Q}(f)$.  To offer some perspective, for DPSS with $NW = 5$, 
$N = 200$, $K = 6$, {\color{black} yields $1-\lambda_{min} \approx 7 \times 10^{-5}$}.  Here $W = NW/N$ is equal to $0.025$.

Seeking to approximate the system's $Q^{\rm th}$ order response in terms of it's response at zero input-frequency, 
consider the in-band portion of the system's $Q^{\rm th}$ order frequency response, $T^{(i)}_{m,Q}$.
To this end, let 
\begin{eqnarray}
T^{(i)}_{0,m,Q}(f) &=& \sum_{{\bf m}_Q = {\bf 1}}^M \Gamma_{m,{\bf m}_Q}^{(Q)}({\bf 0}_{Q-1}, f) \ J( W, Q, f, {\bf m}_Q ) \ ,
    \nonumber \\
  \label{eqn:T_approx}
\end{eqnarray}
\comment
{
\begin{eqnarray}
  \lefteqn{ T^{(i)}_{0,m,Q}(f) = } && \nonumber \\
& & 
  \sum_{{\bf m}_Q = {\bf 1}}^M \Gamma_{m,{\bf m}_Q}^{(Q)}({\bf 0}_{Q-1}, f) \ J( W, Q, f, {\bf m}_Q ) \ ,
    \nonumber \\
  \label{eqn:T_approx}
\end{eqnarray}
}
where
\begin{eqnarray}
    \lefteqn{J(W,Q,f,{\bf m_Q}) = }&& \nonumber \\
  && \int_{-W}^W 
    V_{m_Q}(f - {\bf f}_{Q-1}^T {\bf 1}_{Q-1} ) \prod_{j=1}^{Q-1} V_{m_j}(f_j) df_j \ . \nonumber \\
  \label{eqn:JJ}
\end{eqnarray}
It is useful to define the upper-bound, $J_B$:
\begin{eqnarray}
  \lefteqn{\left| J(W,Q,f,{\bf m_Q}) \right| \leq }&& \hspace{2.05cm} J_B(   W,Q,M) \ , \nonumber \\
                                                      &=& \sup_{\substack{{\bf m}_Q \in \left\{1,\ \ldots, \ M\right\}^{Q} \\ f \in (-W,W) }} \nonumber \\
                                                      && \int_{-W}^W \left| \ 
    V_{m_Q}(f - {\bf f}_{Q-1}^T {\bf 1}_{Q-1} ) \prod_{j=1}^{Q-1} V_{m_j}(f_j) \ \right| df_j \ . \nonumber \\
    \label{eqn:abs_JJ_ub}
\end{eqnarray}
Let \begin{eqnarray}
 \lefteqn{\Gamma^{(Q)'}_{m,*} =} && \nonumber \\
  && \sup_{\substack{ {\bf m}_Q \in \left\{1,\ \ldots, \ M\right\}^{Q} \\ f \in (-W,W) \\ {\bf f}_{Q-1} \in \left(-W,W\right)^{Q-1}}} \nonumber \\
  && \int_0^1 \bigg| \nabla_{{\bf f}_{Q-1}}\bigg\{ 
        \Gamma^{(Q)}_{m, {\bf m}_Q}\left( t {\bf f}_{Q-1}, f - t {\bf f}_{Q-1}^T {\bf 1}_{Q-1} \right) \bigg\} \bigg|\ dt \ . \nonumber \\
  \label{eqn:in-band-sys-rem}
\end{eqnarray}
By Taylor expanding (Appendix (\ref{app:b})), it can be seen that,
 \begin{eqnarray}
   \lefteqn{ \left| T^{(i)}_{m,Q}(f) - T^{(i)}_{0,m,Q}(f) \right| \leq} && \hspace{3.5cm} B_{m,M,Q}(W,\Gamma_{m,*}^{(Q)'}) \ , \nonumber \\
     \label{eqn:bnd_b}
 \end{eqnarray}
 where
 \begin{eqnarray}
   \lefteqn{B_{m,M,Q}( W,\Gamma_{m,*}^{(Q)'})  =}&& \nonumber \\
                                                 && \hspace{2.0cm} W^{Q-1} \ \Gamma^{(Q)'}_{m,*} \ M^Q \ J_B(W,Q,M) \ . \nonumber \\
   \label{eqn:B}
 \end{eqnarray}
Combining (\ref{eqn:bnd_a}) and (\ref{eqn:bnd_b}) obtain,
\begin{eqnarray}
  \lefteqn{ \left| T_{m,Q}(f) - T^{(i)}_{0,m,Q}(f) \right| \leq} && \nonumber \\
    && \hspace{.1cm} A_{m,M,Q}(\lambda_{min}, V_{M,*}, \Gamma^{(Q)}_{m,*} ) + B_{m,M,Q}(W,\Gamma_{m,*}^{(Q)'}) \ . \nonumber \\ 
    \label{eqn:Ti_approx}
\end{eqnarray}
Thus, (\ref{eqn:T_approx}) approximates the $Q^{\rm th}$ order Volterra response to DPSS input.
Contributing to (\ref{eqn:T_approx}) is the $Q-1$ dimensional integral over the DPSWFs, $J(W,Q,f,{\bf m}_Q)$
specified in (\ref{eqn:JJ}). Because 
\begin{equation}
  \int_{-W}^W | V_j (f') |^2 \ df' = \lambda_j  \ \approx 1 \ , 
\end{equation}
\begin{eqnarray}
  \left| J(W,Q,f,{\bf m}_Q ) \right| &\leq& J_B(W,Q,M)  \ , \nonumber \\
                                     &\approx& \left( 2 W\right)^{\frac{ Q-2}{2}} \ , \ Q > 1 \ . 
  \label{eqn:JJ_approx}
\end{eqnarray}
{\color{black} Figure (\ref{fig:crude_est}) shows \eqref{eqn:JJ_approx}, $\rm J_B$ and $\max\limits_{f \in (-\frac{1}{2}, \frac{1}{2})} |J(W,Q,f,{\bf m}_Q)|$
for 
 various $Q$ and 
randomly chosen ${\bf m}_Q$.} 
\footnote{Here, $NW = 4$, $N =256$ (black curve) or $N=1000$ (red curve), for $Q = \left\{ 3, \ 4, \ 5,\ 6 \right\}$ and $M = 6$.  Twenty
five random draws (with replacement) from the sequence with elements one through six are made.  The $i^{th}$ draw, ${\bf m}_Q^{(i)}$, is
used to compute $ J(   W,Q,f,{\bf m}_Q^{(i)})$ for $f$ varied over the interval $(-W,W)$.  For each 
$\max\limits_{f \in (-W,W)} \left|  J(   W,Q,f,{\bf m}_Q^{(i)}) \right|$ a mark in Fig. (\ref{fig:crude_est}) is made.}
Eqn. \eqref{eqn:JJ_approx} approximately (exactly in the plot) upper-bounds $\left| J(   W,Q,f, {\bf m}_Q^{(i)}) \right|$ for simulated 
conditions.  
Using \eqref{eqn:JJ_approx}, the in-band $Q^{\rm th}$ order Volterra system response can be bounded:
\begin{figure}
  \begin{center}
  \includegraphics[width=3in]{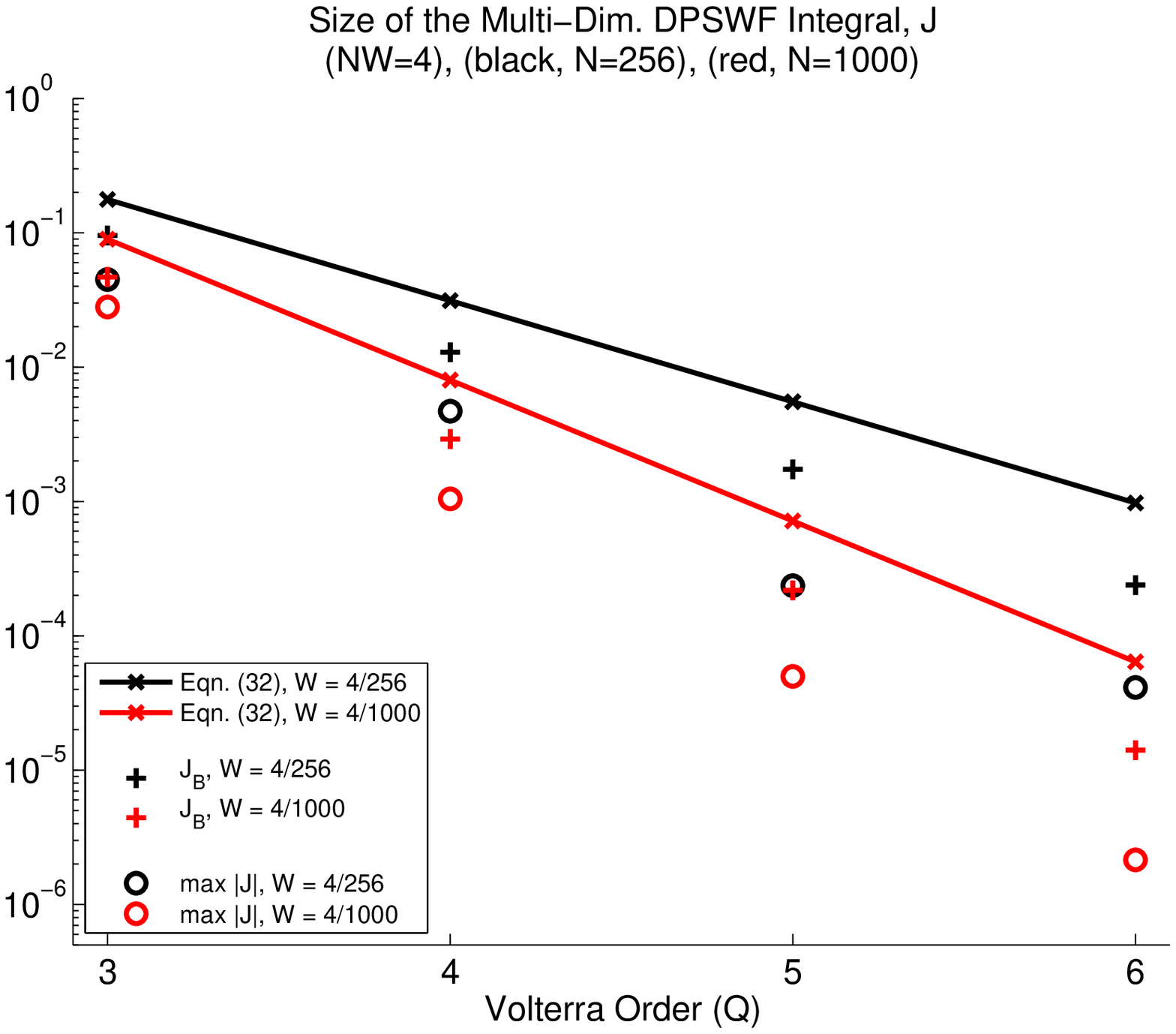}
    \caption{\label{fig:crude_est} {\bf The integral specified in (\ref{eqn:JJ}),  $ J(   W,Q,f,{\bf m}_Q)$, is approximately 
upper-bounded by (\ref{eqn:JJ_approx}).}
      In this simulation $J(   W,Q,f,{\bf m}_Q)$ is numerically computed for $N=256$ (black) and $N=1000$ (red) as a function of Volterra order $Q$.
      The multi-index ${\bf m}_Q$ is picked randomly $25$ times for each order and $N$.
      The frequency $f$ is varied over the interval $(-W,W)$.  Each mark corresponds to the maximum of the absolute value of $J$ (circles) over these
      frequencies, or the value of the bound $J_B$ (pluses).  The solid lines are computed using the approximation (\ref{eqn:JJ_approx}) and usefully approximate an 
    upper-bound for the actual maximum of the absolute integrals.  
    } 
  \end{center}
\end{figure}
\begin{eqnarray}
  \lefteqn{ \left| T^{(i)}_{0,m,Q}(f) \right| \leq } && \nonumber \\
&& 
  \sum_{{\bf m}_Q = {\bf 1}}^M \bigg| \Gamma_{m,{\bf m}_Q}^{(Q)}({\bf 0}, f) \bigg| \bigg| J(W,Q,f,{\bf m}_Q) \bigg| \ , \nonumber \\
&\leq&
   \left( 2W \right)^{\frac{Q-2}{2}} \sum_{{\bf m}_Q = {\bf 1}}^M \bigg| \Gamma_{m,{\bf m}_Q}^{(Q)}({\bf 0}, f) \bigg| + \delta' \ , \nonumber \\
&\leq&
  \left( 2W \right)^{\frac{Q-2}{2}} \ M^Q \ \Gamma_{m,**}^{(Q)}({\bf 0}, f) + \delta' \ . \nonumber \\
  \label{eqn:T_bnd1}
\end{eqnarray}
Here 
\begin{eqnarray}
  \lefteqn{ \Gamma_{m,**}^{(Q)}( {\bf 0}, f ) = }&& \nonumber \\
  && \sup\limits_{{\bf m}_Q \in \left\{1,\ 2, \ldots, \ M\right\}^Q} \ \left| \Gamma_{m,{\bf m}_Q}^{(Q)}({\bf 0}, f ) \right| \ . \nonumber \\
  \label{eqn:gamma_**}
\end{eqnarray}
In the following the case  $\delta' = 0$ is explicitly investigated.
Combining (\ref{eqn:Ti_approx}) and (\ref{eqn:T_bnd1}), 
\begin{eqnarray}
  \lefteqn{ \left| T_{m,Q}(f) \right| \leq } && \hspace{1.2cm} \nonumber \\
    && C_{m,M,Q}(f,\lambda_{min},W,V_{M,*},\Gamma^{(Q)}_{m,*},\Gamma_{m,*}^{(Q)'}, \Gamma^{(Q)}_{m,**}({\bf 0}, f)) \ , \nonumber \\
\end{eqnarray}
\begin{eqnarray}
  \lefteqn{ \left| T_{m,Q}(f) \right| \leq} && \hspace{1.2cm} \nonumber \\
    && \hspace{0.25cm} A_{m,M,Q}(\lambda_{min}, V_{M,*}, \Gamma^{(Q)}_{m,*} )+ 
    B_{m,M,Q}(W,\Gamma_{m,*}^{(Q )'})  \ + \nonumber \\
  && \hspace{1cm} \left( 2 W \right)^\frac{Q-2}{2}\  M^Q \ \Gamma_{m,**}^{(Q)}({\bf 0},f) \ , \nonumber \\
  &=& \left( 1 - \lambda_{min} \right)^{Q/2} V_{M,*} M^Q \Gamma_{m,*}^{(Q)} \ + \nonumber \\
   &&  2^\frac{Q-2}{2}W^{\frac{3Q}{2}-2} M^Q \Gamma_{m,*}^{(Q )'} \ + \nonumber \\
  && \left( 2W \right)^\frac{Q-2}{2} \ M^Q \ \Gamma_{m,**}^{(Q)}({\bf 0},f) \ , \ \ \ Q \geq 2 \ .\nonumber \\
  \label{eqn:tmq_f_bnd}
\end{eqnarray}
Thus the $Q^{\rm th}$ order Volterra kernel system response due to the DPSS inputs is bounded by a sum of 
three positive terms.  The first of these terms is due to the restriction of the integrals to $(-W,W)$.  This term
is largest when the DPSWFs are
poorly concentrated within $(-W,W)$. In this case $\lambda_{min}$ is small. 
The second term in (\ref{eqn:tmq_f_bnd}) results from the
Taylor expansion of the truncated integrals.
 The third term bounds the in-band contribution to the $Q^{\rm th}$ order Volterra kernel response.  
 Both the second and third term feature a further $W$ dependence owing to
the shifted multi-dimensional integral of the product DPSWFs (\ref{eqn:JJ}), independent of system properties.
For a fixed number of system inputs $M$, and a fixed $\lambda_{min}$,
\begin{equation}
  \left| T_{m,Q}(f) \right| = O\left( \left(  W \right)^\frac{Q-2}{2} \right) \ .
\end{equation}
Since $W<\frac{1}{2}$, the higher-order responses are suppressed relative to the linear and quadratic responses when the Volterra system is 
driven with DPSS input.
\comment
{
Recognizing that $D_{1,*}$ is zero, to investigate the effect on $C_{m,M,Q}(N,NW)$ due to the DPSS parameters independent of system effects, consider
the situation where $\Gamma_{m,*}^{(Q)}({\bf 0}, f)$ and $\Gamma_{m,**}({\bf 0},f)$ are equal to one independent of $Q$.  Further, assume that 
the system responses match the bounds.  In this scenario, a plot of 
\begin{eqnarray}
  \lefteqn{C'_{M,Q}(N,NW)  \times \frac{ V_{M,*} \sqrt{1-\lambda_{min}} + V_{M,**}}{M^{Q-1}} = } && \hspace{1.5cm}                 \ , \nonumber \\
    &&  V_{M,*} \left( 1 - \lambda_{min} \right)^{Q/2}  + 
    2^{Q-1}       W^{2Q-2} +     \nonumber \\ 
    && V_{M,**}
    \bigg[ v_0^{(*)} + \sqrt{ 1 - \lambda_{min} } \bigg]^{Q-1}
      \ , \nonumber \\
\label{eqn:cprime}
\end{eqnarray}
displays the relevant contribution to the system output due to the Volterra kernels of differing orders with respect to the system output due
to the linear component ($Q=1$). Here the difference in contribution
is due solely to the response of the system to features of the DPSS inputs.
In Fig. (\ref{fig:c_m_M_Q}), $C'_{M,Q}(N,NW)$ is plotted for various $NW$, $N$, and $M$ as the Volterra kernel order $Q$ is varied.
\begin{figure}
  \begin{center}
    \includegraphics[width=3.5in]{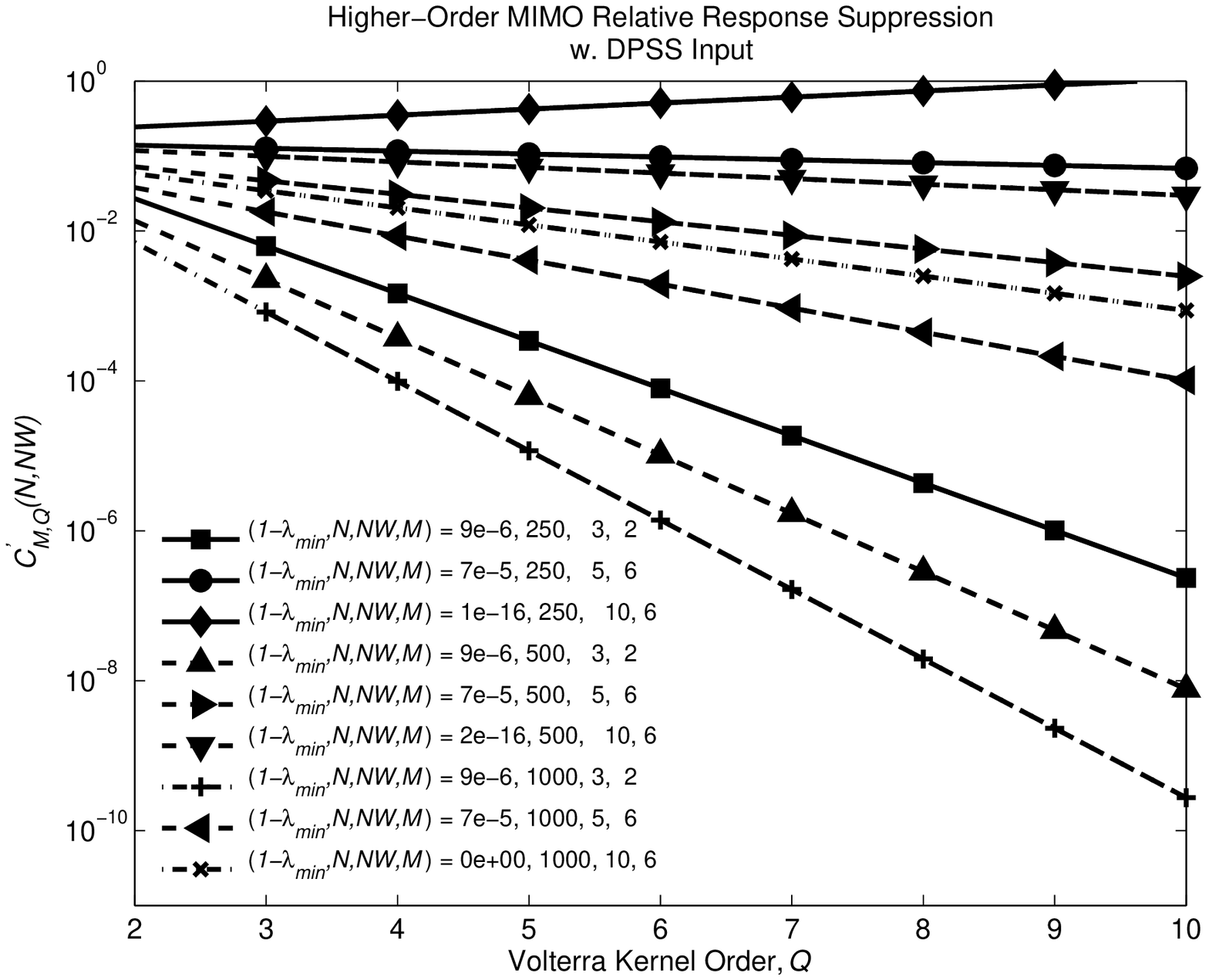}
    \caption{ \label{fig:c_m_M_Q}
      The quantity $C'_{M,Q}(N,NW)$ specified in (\ref{eqn:cprime}) is plotted 
      for various $N$, $NW$, $M$, with changing $Q$.  This quantity summarizes the system independent
contribution to the $Q^{\rm th}$ order Volterra output when the system-dependent quantities are set to unity
and the contribution bounds are assumed to be met.  
      The higher-order system response to DPSS input is an order of magnitude reduced
      relative to the linear system response for certain parameter values.  
      As expected, this suppression increases with increasing input length, $N$, with
      decreasing $W$, decreasing $1-\lambda_{min}$ and decreasing $M$.  Note the saturation effects as 
      different terms dominate.
    } 
  \end{center}
\end{figure}
    The higher-order system response to DPSS input is at least an order of magnitude reduced
      relative to the linear system response (Fig. (\ref{fig:c_m_M_Q})) for suitably chosen $M$.  
As expected, this suppression increases with increasing input length, $N$, with
      decreasing $W$ and with decreasing $1-\lambda_{min}$, though note that there is a saturation effect as different terms dominate.
} 
\comment
{
  Further note that the absolute bound on the $Q^{\rm th}$ order response is related to $C'_{M,Q}(N,NW) \ \times \  \sqrt{1-\lambda_{min}}$.
  Thus the absolue bounds on the system response can be obtained from Fig. (\ref{fig:c_m_M_Q}) by multiplication with $\sqrt{1-\lambda_{min}}$.
}
\comment
{
In fact,
noting that $D_{1,*} = 0$, and ignoring the system specific contributions to the bounds, the ratio of the
$Q = 2$ bound to the $Q=1$ bound is,
\begin{eqnarray}
  \lefteqn{ R_{2,1} = } && \nonumber \\
  && 
    \frac{
  M^2 \left( 1 - \lambda_{min} \right) + 
            2 M^2 W^{2} + 
  M^2  \left(       v_0^{(*)} + \left( 1 - \lambda_{min} \right) \right)}
  { 
      M \left( 1 - \lambda_{min} \right)^{1/2} + 2 M   
  } \ , \nonumber \\
 &&
    \frac{
  M  \bigg[  2 \left( 1 - \lambda_{min} \right) + 
            2 W^{2} + v_0^{(*)} \bigg]
  }{ 
        \left( 1 - \lambda_{min} \right)^{1/2} + 2     
  } \ , \nonumber \\
  &\approx&
  M  \bigg[    \left( 1 - \lambda_{min} \right) + 
              W^{2} + v_0^{(*)}/2 \bigg]
       \ . \nonumber \\
\end{eqnarray}
For $N = 200$, $NW = 4$, and $K=6$, $R_{1,2} \approx $.
} 

\comment
{
Taking the supremum over the system kernel contributions out of the sums and then taking the limit $Q \rightarrow \infty$,
obtain,
{\tiny
\begin{eqnarray}
  \lefteqn{ \lim\limits_{Q\rightarrow \infty} \left| W_m^{(Q)}(f) - T_{m,1}(f) \right| \leq   } && \nonumber \\
  && 
  V_{M,*} \Gamma_{m,*}^{(*)} \frac{1 - \left[ 1 - M \left( 1 - \lambda_{min} \right)^{1/2} \right] - 
      M\sqrt{1-\lambda_{min}}\left(1 - M \sqrt{1 - \lambda_{min}}\right) }
{ 1 - M \left( 1 - \lambda_{min} \right)^{1/2} }              + \nonumber \\
  &&
  D_{*,*} 2^{-1} W^{-2} 
    \frac{ 1 - \left( 1 - 2 MW^2 \right) \left( 1 + 2^2 M^2 W^4 \right)}
    { 1 - 2 M W^2 } + \nonumber \\
  && V_{M,**} \Gamma_{m,**}^{(*)} \bigg[ \nonumber \\
  && \left( v_0^{(*)} \right)^{-1} \frac{ 1 - \left( 1 - M v_0^{(*)} \right) \left( 1 + M^2 \left( v_0^{(*)}\right)^2 \right)}
      { 1 - M  v_0^{(*)} }
             + \nonumber \\
  && ( 1 - \lambda_{min} )^{-1/2} \frac{ 1 - \left( 1 - M \sqrt{ 1 - \lambda_{min}}\right) \left( 1 + M^2 (1-\lambda_{min})\right) }
        { 1 - M \sqrt{1 - \lambda_{min}}} \bigg]
                \ , \nonumber \\
\end{eqnarray}
} 
for values of $W$, $M$, $\lambda_{min}$ such that the geometric series converge.
Simplifying,
{\tiny
\begin{eqnarray}
  \lefteqn{ \lim\limits_{Q\rightarrow \infty} \left| W_m^{(Q)}(f) - T_{m,1}(f) \right| \leq   } && \nonumber \\
  && 
  V_{M,*} \Gamma_{m,*}^{(*)} \frac{ M^2 \left(  1 - \lambda_{min} \right) }
{ 1 - M \left( 1 - \lambda_{min} \right)^{1/2} }              + \nonumber \\
  &&
  D_{*,*} W^{-2} MW^2  
    \frac{   1 - 2   M   W^2 + 4 M^2 W^4 } 
    { 1 - 2 M W^2 } + \nonumber \\
  && V_{M,**} \Gamma_{m,**}^{(*)} \bigg[ \nonumber \\
  && M 
      \frac{ 1 - M v_0^{(*)} + M^2 \left( v_0^{(*)}\right)^2  }
      { 1 - M  v_0^{(*)} }
             + \nonumber \\
  && M \frac{ 1 - M \sqrt{ 1 - \lambda_{min}} + M^2 (1-\lambda_{min})        }
        { 1 - M \sqrt{1 - \lambda_{min}}} \bigg]
                \ , \nonumber \\
  &\approx&
        V_{M,*} \Gamma_{m,*}^{(*)} M^2 \left(  1 - \lambda_{min} \right)  + \nonumber \\
          && D_{*,*}        M + \nonumber \\
        && V_{M,**} \Gamma_{m,**}^{(*)} M \ .
\end{eqnarray}
} 
Thus, linear system response requires conditions on the Volterra kernels as the system order $Q$ tends to infinity.
\end{proof}
} 

\comment
{
\hrule
From Parseval's theorem\footnote{A consequence of the fact that inner products are invariant to unitary transformation.}, the
inner product, $I$, of the $m^{\rm th}$ output with the $m'^{\rm th}$ output is equal to 
\begin{eqnarray}
  I_{m,m'} &=& \int_{-\frac{1}{2}}^\frac{1}{2} W_m(f) W^*_{m'}(f) \ df \ , \\
      &=& \sum_{q,q'=1}^Q \int_{-\frac{1}{2}}^\frac{1}{2} T_{m,q}(f) T^*_{m',q'}(f) \ df \ , \\ 
      &=& \sum_{q,q'=1}^Q J_{m,m',q,q'} \ . 
  \label{eqn:ip}
\end{eqnarray}
Each $J_{m,m',q,q'}$ is the inner-product between the order $q$ volterra system output on channel $m$ with
the order $q'$ volterra system output on channel $m'$. 
{\it
The sense in which $I_{m,m'} \approx 0$ is quantified in terms of $W$ the 
effective support of the discrete prolate spheroidal wave functions (DPSWFs) in the frequency domain,
the extent of history dependence of the volterra kernels, $\gamma$, the order of the volterra expansion, $Q$,
and the number of inputs, $M$.  
}

Eqn. (\ref{eqn:ip}) specifies the inner-product between output channels of a general volterra model of a MIMO system.  
In this paper focus it placed on $I_{m,m'}$ when the system inputs $U_{m_j}$ are specified to equal $V_{m_j}$ the 
order $m_j$ DPSWF.
\begin{eqnarray}
   \lefteqn{J_{m,m',q,q'} =}&& \nonumber \\
       && \int_{-\frac{1}{2}}^\frac{1}{2} \sum_{\substack{ {\bf m}_q ={\bf 1}_q \\ {\bf m}'_{q'} ={\bf 1}_{q' }}}^M 
                              \int_{-\frac{1}{2}}^\frac{1}{2} \int_{-\frac{1}{2}}^\frac{1}{2} \times \nonumber \\
       &&    \Gamma^{(q)}_{m, {\bf m}_q}\left({\bf f}_{q-1}, f - {\bf f}_{q-1}^T {\bf 1}_{q-1} \right) \times \nonumber \\
     &&      \Gamma^{(q')*}_{m',{\bf m}'_{q'-1}}\left( {\bf f'}_{q-1}, f - {\bf f'}_{q-1}^T {\bf 1}_{q'-1}  \right) \times \nonumber \\
       &&                       V_{m_q}\left(  f - {\bf f}_{q-1}^T {\bf 1}_{q-1} \right) \prod_{j=1}^{q-1} V_{m_j}( f_j ) \times \nonumber \\
       &&                       V^*_{m'_{q'}}\left(  f' - {\bf f'}_{q'-1}^T {\bf 1}_{q'-1} \right) \prod_{j'=1}^{q'-1} V_{m'_{j'}}^*( f'_{j'} ) \times \nonumber \\
       &&                       d{\bf f}_{q-1} d{\bf f'}_{q'-1} \ df \ .
        \label{eqn:bigJ}
\end{eqnarray}
Further restrict $m_j \in \mathbb{V}$, the set of DPSWF orders such that $\lambda_{m_j}$ is near one. When the number
of input channels, $M$, exceeds $|\mathbb{V}|$, set the remaining inputs to zero.
With this additional restriction (\ref{eqn:bigJ}) can be bounded. 
Let $Ji_{m,m',q,q',{\bf m}_q,{\bf m'}_{q'}}({\bf f}_q, {\bf f'}_{q'} )$ be the
integral in (\ref{eqn:bigJ}) over $f$,  
\begin{eqnarray}
  \lefteqn{Ji_{m,m',q,q',{\bf m}_q, {\bf m'}_{q'} }( {\bf f}_q, {\bf f'}_{q'} ) = }&& \nonumber \\
    && \int_{-\frac{1}{2}}^\frac{1}{2} 
       \Gamma^{(q)}_{m,{\bf m}_q }\left({ \bf f}_{q-1}, f - {\bf f}_{q-1}^T {\bf 1}_{q-1} \right) \times \nonumber \\
       && \Gamma^{(q')*}_{m',{\bf m'}_{q'} }\left({\bf f'}_{q-1}, f - {\bf f'}_{q'-1}^T {\bf 1}_{q'-1} \right) \times \nonumber \\
            && V_{m_q}( f - {\bf f}_{q-1}^T {\bf 1 } ) V^*_{m'_{q'}}( f - {\bf f'}_{q'-1}^T {\bf 1}_{q'-1} ) \ df \ . \nonumber \\
    \label{eqn:Ji}
\end{eqnarray}
Let 
\begin{eqnarray}
    \lefteqn{Ji^{(W)}_{m,m',q,q',{\bf m}_q, {\bf m'}_{q'}}({\bf f}_q, {\bf f'}_{q'})  = } && \nonumber \\
      &&\kappa_{m,m',q,q',{\bf m}_q, {\bf m'}_{q'} }({\bf f}_q, {\bf f'}_{q'}) \ \times \nonumber \\
      && \int_{-W}^W V_{m_q}(x) V^{*}_{m'_{q'}}(x) \ dx \ ,
  \label{eqn:jiw_kappa}
\end{eqnarray}
where 
\begin{eqnarray}
  \lefteqn{\kappa_{m,m',q,q',{\bf m}_q, {\bf m'}_{q'}} ({\bf f}_q, {\bf f'}_{q'} )  =} && \nonumber \\
    && \Gamma^{(q)}_{m,{\bf m}_q }\left( {\bf f}_{q-1}, -{\bf f}_{q-1}^T {\bf 1}_{q-1} \right) \times \nonumber \\
    && \hspace{.2cm} \Gamma^{(q')*}_{m',{\bf m'}_{q'}  }\left({\bf f'}_{q'-1},  {\bf f}_{q-1}^T{\bf 1}_{q-1} -  {\bf f'}_{q'-1}^T {\bf 1}_{q'-1} \right)  \ .
      \nonumber \\
\end{eqnarray}

We are now in a position to relate the inner-integral in (\ref{eqn:bigJ}) to an inner product between the DPSWFs input on 
channels $m_q$ and $m'_{q'}$. 
\begin{theorem}{$Ji^{(W)}$ approximates $Ji$\newline}
  \label{th:Ji_approx}
  Let $B_{m'_{q'},1}$ be the bound specified in (\ref{app:deriv_bnd}) for the derivative of the magnitude of the $m'_{q'}$ order DPSWF, and
  let 
$C_{m,m', {\bf m}_q, {\bf m'}_{q'}  }({\bf f}_q, {\bf f'}_{q'}, W)$ be as specified in (\ref{eqn:Cbig}).
  Then,
\begin{eqnarray}
    \lefteqn{\left| Ji_{m,m',q,q',{\bf m}_q, {\bf m'}_{q'}}({\bf f}_q,{\bf f'}_{q'})
           - Ji^{(W)}_{m,m',q,q',{\bf m}_q, {\bf m'}_{q'}}({\bf f}_q,{\bf f'}_{q'} )\right| \leq } &  \hspace{2cm}  \nonumber \\
      &&  B_{m'_{q'},1} \ \sqrt{\lambda_{m_q}} \ \left( e - 1 \right) \ \times \nonumber \\
      &&  \hspace{1cm} C_{m,m', {\bf m}_q, {\bf m'}_{q'}}({\bf f}_q, {\bf f'}_{q'}, W) \ . \nonumber \\
\end{eqnarray}
\end{theorem}
The proof of Thm. (\ref{th:Ji_approx}) is provided in 
(\ref{app:ji_w}).
\begin{figure*}[h]
\begin{eqnarray}
  \lefteqn{C_{m,m', {\bf m}_q, {\bf m'}_{q'} }( {\bf f}_q, {\bf f'}_{q'}, W ) = } &&\hspace{2cm}  \hspace{0cm}     \nonumber \\
      &  \bigg[ 
                \left| 
                  \Gamma_{m,{\bf m}_q }\left( {\bf f}_{q-1}, \ -{\bf f}_{q-1}^T {\bf 1}_{q-1}  \right) 
                  \Gamma^*_{m',{\bf m'}_{q'} }\left( {\bf f}_{q'-1}, \ -{\bf f}_{q'-1}^T {\bf 1}_{q'-1}  \right) 
                \right|^2 + \nonumber \\
      &         \left| 
                  \Gamma_{m,{\bf m}_q }\left( {\bf f}_{q-1}, W - {\bf f}_{q-1}^T {\bf 1}_{q-1} \right) 
                  \Gamma^*_{m',{\bf m'}_{q'} }\left( {\bf f'}_{q'-1}, W - {\bf f'}_{q'-1}^T {\bf 1}_{q'-1} \right) 
                \right|^2 - \nonumber \\
      &         2 \cos{ \left( \phi(W) - \phi(0) \right) } \times \nonumber \\
      &         \left| 
                  \Gamma_{m,{\bf m}_q }\left( {\bf f}_{q-1}, \ -{\bf f}_{q-1}^T {\bf 1}_{q-1} \right)
                  \Gamma^*_{m',{\bf m'}_{q'} }\left( {\bf f'}_{q'-1}, \ -{\bf f'}_{q'-1}^T {\bf 1}_{q'-1} \right)
                 \right| \times \nonumber \\
      &         \left| \Gamma_{m,{\bf m}_q }\left( {\bf f}_{q-1}, W-{\bf f}_{q-1}^T {\bf 1}_{q-1} \right) 
                        \Gamma^*_{m',{\bf m'}_{q'} }\left( {\bf f'}_{q'-1}, W - {\bf f}_{q'-1}^T {\bf 1}_{q'-1} \right) 
                 \right| \bigg]^\frac{1}{2} \ .\nonumber \\
\end{eqnarray}
\caption{ \label{eqn:Cbig}
      This is the distance in the complex plane between a vector associated with frequency ${\bf 0}$ and a vector associated with 
frequency ${\bf W}$.  If the product transfer function doesn't change between ${\bf 0}\ {\rm cps}$ and ${\bf W} \ {\rm cps}$, then
$C$ is zero.  Hence, $C$ tends to zero as $W$ tends to zero.  
} 
\end{figure*}
Some comments on the size of this bound are appropriate.  
When $m_q \neq m'_{q'}$, the inner-integral is zero in the sense specified by Thm. (\ref{th:Ji_approx}).
While this prevents the different input channels from mixing via the inner integral, the outer integrals also contribute to $J_{m,m',q,q'}$:
\begin{eqnarray}
   \lefteqn{J_{m,m',q,q'} \approx}&& \nonumber \\
       && \sum_{\substack{ {\bf m}_q ={\bf 1} \\ {\bf m}'_{q'} ={\bf 1}}}^M \int_{-W}^W V_{m_q}(x) V^{*}_{m'_{q'}}(x) \ dx \times \nonumber \\
        && \int_{-\frac{1}{2}}^\frac{1}{2} \int_{-\frac{1}{2}}^\frac{1}{2} 
        \kappa_{m,m',q,q',{\bf m}_q, {\bf m'}_{q'} }({\bf f}_q, {\bf f'}_{q'}) \ \times \nonumber \\
      && \prod_{j=1}^{q-1} V_{m_j}( f_j ) \prod_{j'=1}^{q'-1} V_{m'_{j'}}^*( f'_{j'} ) \ d{\bf f}_{q-1} d{\bf f'}_{q'-1} \ .
        \label{eqn:bigJ2}
\end{eqnarray}
In general, the higher-order Volterra kernels, $q,q' > 1$, involve all of the input channels.  This inclusion allows any of the 
input channels to contribute to $J_{m,m',q,q'}$.  Because of the 
in-band energy concentration of the DPSWFs the integrals in (\ref{eqn:bigJ2}) can be taken over the interval $(-W,W)$ to obtain,
\begin{eqnarray}
   \lefteqn{J_{m,m',q,q'} \approx}&& \nonumber \\
       && \sum_{\substack{ {\bf m}_q ={\bf 1} \\ {\bf m}'_{q'} ={\bf 1}}}^M \int_{-W}^W V_{m_q}(x) V^{*}_{m'_{q'}}(x) \ dx \times \nonumber \\
        && \int_{-W}^W \int_{-W}^W 
        \kappa_{m,m',q,q',{\bf m}_q, {\bf m'}_{q'} }({\bf f}_q, {\bf f'}_{q'}) \ \times \nonumber \\
      && \prod_{j=1}^{q-1} V_{m_j}( f_j ) \prod_{j'=1}^{q'-1} V_{m'_{j'}}^*( f'_{j'} ) \ d{\bf f}_{q-1} d{\bf f'}_{q'-1} \ .
        \label{eqn:bigJ3}
\end{eqnarray}
When $\kappa_{m,m',q,q',{\bf m}_q,{\bf m'}_{q'}}({\bf f}_q, {\bf f'}_{q'} )$ is slowly varying over the hyper-cube, $(-W,W)^{q+q'-2}$, 
$\left| J_{m,m',q,q'} \right|$ is small for $q,q' > 2$, restricting the influence of input channels on $I_{m,m'}$.
\begin{theorem}{Higher-order Volterra Kernel Suppression\newline}
  \label{thm:ho_supp}
Let 
\begin{equation}
   B^{(J)}_{m,m',q,q'} = \alpha_* M^{q+q'} s_{q+q'}(N, NW) \ , \nonumber \\
  \label{eqn:jbnded}
\end{equation}
as defined in Appendix (\ref{app:ho_supp}).
Then
  \begin{equation}
    \left| J_{m,m',q,q'} \right| \leq  B^{(J)}_{m,m',q,q'} \ .
  \end{equation}
\end{theorem}
The proof, provided in Appendix (\ref{app:ho_supp}), follows by Taylor expanding and truncating the integrals to the frequency interval $(-W,W)$.  
Theorem (\ref{thm:ho_supp}) provides a bound on the contribution to the inner product between output channels $m$ and $m'$ by
the interaction between the $q^{\rm th}$ and $q^{'{\rm th}}$ order Volterra kernels.  This bound, $B^{(J)}_{m,m',q,q'}$, depends upon
a kernel specific quantity, $\alpha_*$, the number of input channels $M$, and a quantity, $s_{q,q'}(N,NW)$ related to the DPSWFs.
A plot of  $s_{q,q'}(N,NW)$ as a function of $q+q'$ for various $N$ and $NW$ is provided in Fig. \ref{fig:s}.
\begin{figure*}
  \begin{center}
     \includegraphics[width=3.5in]{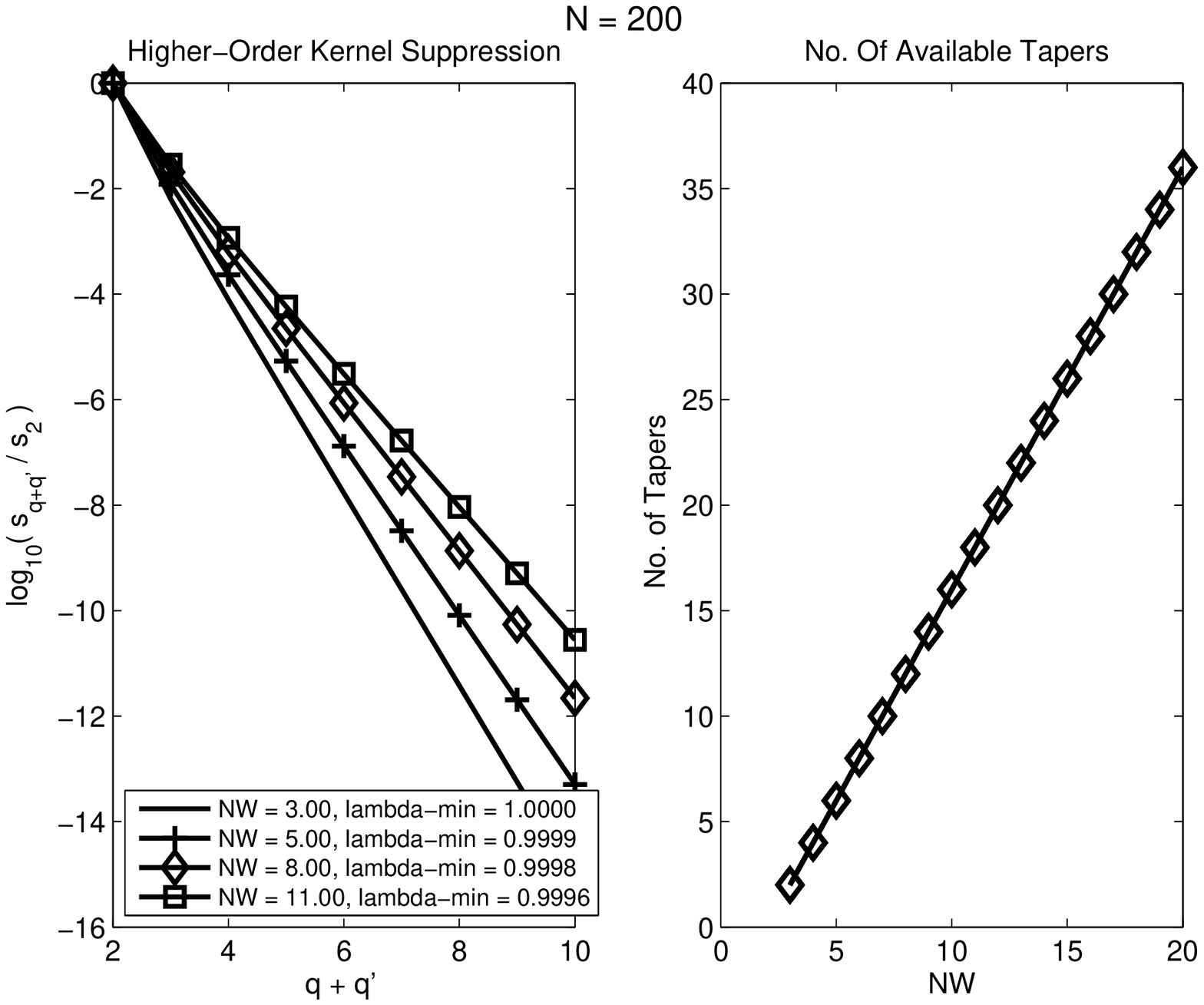}
     \includegraphics[width=3.5in]{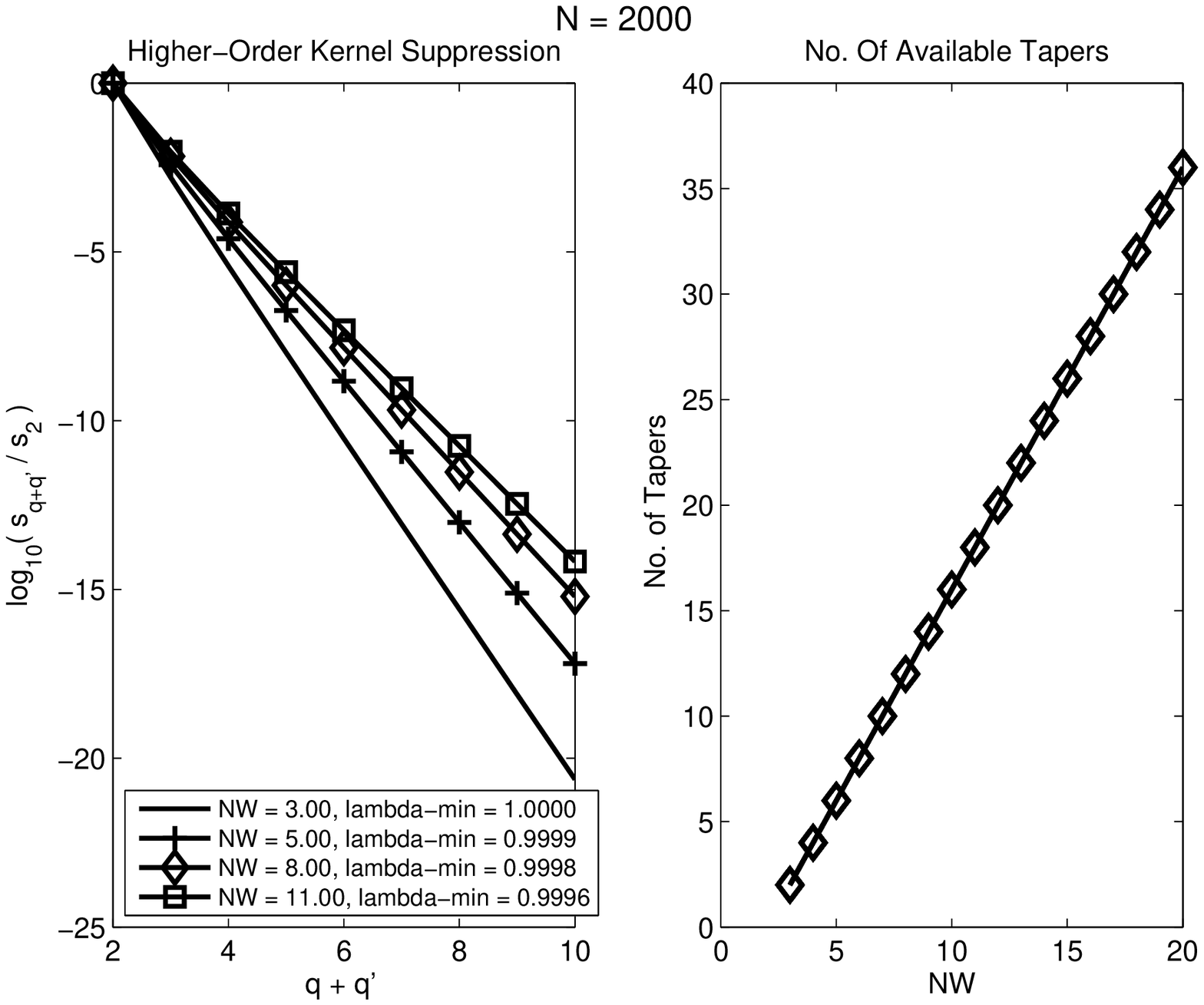}
    \caption{ \label{fig:s}
        The bound, $s_{q+q'}(N,NW)$, specified by (\ref{eqn:dpswf_bnd}) on the contribution, $|J_{m,m',q,q'}|$ to the inner-product, $I_{m,m'}$, 
      normalized by the bound on the 
        linear kernel ($q+q'=2$).  The second order kernel contribution is suppressed by two orders of magnitude due
        to the effects of the DPSWFs relative to the first-order kernel contribution.
    }  
  \end{center}
\end{figure*}
From Fig. (\ref{fig:s}) the $q+q'=3$ contributions to $I_{m,m'}$ are two orders of magnitude less than the $q+q'=2$ term, due
solely to the nature of the DPSS input.  This factor is scaled by the number of input channels, $M$, and the system dependent quantity, $\alpha_*$, specified in 
(\ref{eqn:alpha*}).
Thus the orthogonality of the output channel signals depends upon the number of non-zero DPSS inputs and the nature of the system. 
Using (\ref{eqn:bigJ2}) and focusing upon the dominating $q +q'= 2$ term in (\ref{eqn:ip}),
\begin{eqnarray}
   \lefteqn{J_{m,m',1,1} \approx}&& \nonumber \\
       && \sum_{\substack{ {\bf m}_1 ={\bf 1} \\ {\bf m}'_{1} ={\bf 1}}}^M \int_{-W}^W V_{m_1}(x) V^{*}_{m'_{1 }}(x) \ dx \times \nonumber \\
        && \Gamma_{m,m_1}^{(1)}( 0 ) \Gamma_{m',m'_1}^{(1)*}( 0 ) \ , \nonumber \\
  &=& 
       \sum_{\substack{ {\bf m}_1 ={\bf 1} \\ {\bf m}'_{1} ={\bf 1}}}^M \delta_{m_1,m'_1} \lambda_{m_1} 
        \Gamma_{m,m_1}^{(1)}( 0 ) \Gamma_{m',m'_1}^{(1)*}( 0 )  \ , \nonumber \\
  &=& 
       \sum_{\substack{ {\bf m}_1 ={\bf 1} \\ {\bf m}'_{1} ={\bf 1}}}^M \delta_{m_1,m'_1} \lambda_{m_1} 
        \Gamma_{m,m_1}^{(1)}( 0 ) \Gamma_{m',m'_1}^{(1)*}( 0 ) \ , \nonumber \\
  &=& 
       \sum_{m_1=1}^M \lambda_{m_1} \Gamma_{m,m_1}^{(1)}( 0 ) \Gamma_{m',m_1}^{(1)*}( 0 ) \ , \nonumber \\
        \label{eqn:bigJ_11}
\end{eqnarray}
Thus the inner-product, $I_{m,m'}$, between output channels $m$ and $m'$ tends to be dominated by the first order
Volterra kernels when the average of
all average system impulse responses linking the input channels to the output channels $m$ and $m'$ is also non-zero.
In applications such as sounding, one ones to use the inner-product to determine which system connections connect a given
input channel, $i_*$, to which output channels.  
Here
the inner product between the output channels is used to probe system connections, consistent with taking the inner-product between
 copies of the inputs, with the system outputs and monitoring the system responses.  This task can be accomodated in the
developed theory by specifying system connections between input channels $m_1$ and the output channel $m'$ to be zero unless they match.
Specifically, consider the situation where $\Gamma_{m',m_1}^{(1)*} = \delta_{m', m_1}$, where $\delta_{m',m_1}$ is the Kronecher delta function,
zero when $m' \neq m_1$ and one otherwise.  With this specification $m'$ selects the input channel to study and 
$m$ selects the ouput channel to study and the inner-product provides the relevant information, 
\begin{equation}
   J_{m,m',1,1} \approx
       \lambda_{m'} \Gamma_{m,m'}^{(1)}( 0 ) \ . 
        \label{eqn:bigJ_121}
\end{equation}
To recap, the probing operation whereby DPSSs is applied to the system inputs and the system ouputs are queried by resolving the
output waveforms onto individual DPSS, each matching one of the system inputs, is equivalent to $I_{m,m'}$ in the augmented
system where the $m'$ output channels each pass a single input DPSS without modification and block the other input channels.  In that
scenario, (\ref{eqn:bigJ_121}) is approximately equal to the inner-product $I_{m,m'}$.  The development up to now can be used to
provide the quantitative sense in which this favourable system property is valid.   


\hrule
\comment
{
In (\ref{eqn:bigJ1}) the contribution of the inner integral, $Ji$, to $J_{m,m',q,q'}$ differs if the input channel, $m_q$ equals the
input channel $m'_{q'}$, somewhat independent of the remaining input channels, $m_1,m'_1,\ldots,m_q,m'_{q'}$. To explore this
property of the inner-product of the system output, re-write (\ref{eqn:bigJ1}), as,
\begin{eqnarray}
   \lefteqn{J_{m,m',q,q'} =}&& \nonumber \\
       && \sum_{\substack{m_1,\ldots,m_q =1 \\ m'_1,\ldots, m'_{q'} =1  \\ m_q = m'_{q'} }}^M  J^{(if)}_{m,m',q,q',m_1,m'_1,\ldots,m_q,m'_{q'}}
        + \nonumber \\
       && \sum_{\substack{m_1,\ldots,m_q =1 \\ m'_1,\ldots, m'_{q'} =1  \\ m_q \neq m'_{q'} }}^M  J^{(if)}_{m,m',q,q',m_1,m'_1,\ldots,m_q,m'_{q'}}                                \ , \nonumber \\
        &=& J^{( \ell )}_{m,m',q,q'} + J^{(r)}_{m,m',q,q'} \ ,
        \label{eqn:bigJ2}
\end{eqnarray}
where
\begin{eqnarray}
   \lefteqn{ J^{(if)}_{m,m',q,q',m_1,m'_1,\ldots,m_q,m'_{q'}} = } && \hspace{4cm} \nonumber \\
      && \int_{-\frac{1}{2}}^\frac{1}{2} \ldots \int_{-\frac{1}{2}}^\frac{1}{2} \times \nonumber \\
       && \hspace{.5cm} Ji_{m,m',q,q',m_1,m'_1,\ldots,m_q,m'_{q'}}(  f_1,f'_1,\ldots,f_q,f'_{q'}) \times \nonumber \\
        && \hspace{.5cm} df_1 df'_1 \ldots df_q df'_{q'} \ .
\end{eqnarray}

For $m_q \neq m'_{q'}$, $Ji^{(W)} = 0$ and the contribution of $J^{(r)}_{m,m',q,q'}$, the second term in (\ref{eqn:bigJ2}), is bounded.
For integer $q,q' > 0$, and the half-bandwidth $W$, let
\begin{eqnarray}
  \lefteqn{C^{(r)}_{*,m,m'}(W) =} & \hspace{2cm} \nonumber \\
    &  \sup\limits_{\substack{m_1,m'_1,\ldots,m_q,m'_{q'} \ \in \ [1,M]^q \ \cap \ \mathbb{N}^q \ /  \ \left\{ m_q \ : \ m_q \ = \ m'_{q'}\right\} 
                      \\ f_1,f'_1,\ldots,f_{q},f'_{q'} \ \in \ (-W,W)^q \times (-W,W)^{q'}}}
        \hspace{1cm}
        \nonumber \\
    & \hspace{1cm} C_{m,m', m_1, m'_1, \ldots,m_q,m'_q}(f_1,f'_1,\ldots,f_{q},f'_{q'},W) \ . \nonumber \\
\end{eqnarray}
Since in (\ref{eqn:bnd22}) $q$ and $q'$ are interchangeable set $J^{(r)}_{*,m,m',q,q'}$ to the tighter bound:
\begin{theorem}{Bounded $\big| J^{(r)}_{m,m',q,q'} \big|$\newline} 
  \begin{eqnarray}
    \lefteqn{\left| J^{(r)}_{m,m',q,q'} \right| \leq}&& \hspace{1.5cm} \nonumber \\
      && C^{(r)}_{*,m,m'}(W) \ \left( M^{q+q'} - M^{q+q'-1}\right) \ \times \nonumber \\
      && \hspace{1.5cm} \min\left\{ B_{m'_{q'},1} \sqrt{\lambda_{m'_{q'}}} , \ B_{m_q,1} \sqrt{\lambda_{m_q}} \right\} \ ,
            \nonumber \\
      & &= \hspace{.1cm}  J^{(r)}_{*,m, m', q,q'} \ .
  \end{eqnarray}
\end{theorem}
\begin{proof}
The proof is established by direct calculation.
\begin{eqnarray}
  \lefteqn{\left| J^{(r)}_{m,m',q,q'} \right| \leq } &\hspace{2.00cm} B_{m'_{q'},1} \ \sqrt{\lambda_{m_q}} \ C^{(r)}_{*,m,m'}(W) \ \times \hspace{3.5cm} \nonumber \\
        & \sum\limits_{\substack{m_1,\ldots,m_q =1 \\ m'_1,\ldots, m'_{q'} =1 \\ m_q \neq m'_{q'} }}^M 
                              \int_{-\frac{1}{2}}^\frac{1}{2} \ldots \int_{-\frac{1}{2}}^\frac{1}{2} \ \times \nonumber \\
        & \left| \prod\limits_{j=1}^{q-1} V_{m_j}( f_j ) \prod\limits_{j'=1}^{q'-1} V^*_{m'_{q'}}( f'_{j'} ) \right| \times \nonumber \\
       &   \hspace{1cm}                     df_1 \ldots df_{q-1} \ 
                                            df'_1 \ldots df'_{q'-1} \ , \nonumber \\
       &=   B_{m'_{q'},1} \ \sqrt{\lambda_{m_q}} \ C^{(r)}_{*,m,m'}(W) \ \left( M^{q+q'} - M^{q+q'-1}\right) \times \hspace{3.5cm} \nonumber \\
       &                       \int_{-\frac{1}{2}}^\frac{1}{2} \ldots \int_{-\frac{1}{2}}^\frac{1}{2} 
              \left| \prod\limits_{j=1}^{q-1} V_{m_q}( f_j ) \prod\limits_{j'=1}^{q'-1} V^*_{m'_{q'}}( f'_{j'} ) \right| \ \times \nonumber \\
       &            \hspace{1cm} df_1 \ldots df_{q-1} \ 
                                            df'_1 \ldots df'_{q'-1} \ , \nonumber \\
       &\leq   B_{m'_{q'},1} \ \sqrt{\lambda_{m_q}} \ C^{(r)}_{*,m,m'}(W) \ \left( M^{q+q'} - M^{q+q'-1}\right) \times \hspace{3.5cm} \nonumber \\
       &                       \int_{-\frac{1}{2}}^\frac{1}{2} \ldots \int_{-\frac{1}{2}}^\frac{1}{2} 
              \left| \prod\limits_{j=1}^{q-1} V_{m_q}( f_j ) \prod\limits_{j'=1}^{q'-1} V^*_{m'_{q'}}( f'_{j'} ) \right| \ \times \nonumber \\
       &            \hspace{1cm} df_1 \ldots df_{q-1} \ 
                                            df'_1 \ldots df'_{q'-1} \ , \nonumber \\
       &\leq   B_{m'_{q'},1} \ \sqrt{\lambda_{m_q}} \ C^{(r)}_{*,m,m'}(W) \ \left( M^{q+q'} - M^{q+q'-1}\right) \ , \hspace{3.6cm}
  \label{eqn:bnd22}
\end{eqnarray}
since 
  \begin{eqnarray}
    \lefteqn{ \int_{-\frac{1}{2}}^\frac{1}{2} \left| V_k(f) \right| \ df \leq} && \nonumber \\
      && \sqrt{ \int_{-\frac{1}{2}}^\frac{1}{2} \left| V_k(f') \right|^2 df' \ \int_{-\frac{1}{2}}^\frac{1}{2} df''} \ , \nonumber \\
      &=& 1 \ , \ k = 1, \ 2, \ \ldots \ .
  \end{eqnarray}
\end{proof}
\hrulefill
\newline
The contribution to the inner-product, $I_{m,m'}$, due to terms where $m_q \neq m'_{q'}$ can now be bounded. 
\begin{eqnarray}
  I_{m,m'} &=& I^{(x)}_{m,m'} + I^{(o)}_{m,m'} \ , \nonumber \\
      &=& \sum_{q,q'=1}^Q J^{(\ell)}_{m,m',q,q'}  + \sum_{q,q'=1}^Q J^{(r)}_{m,m',q,q'} \ .
\end{eqnarray}
Now the contribution to $I_{m,m'}$ due to the cross-orders, $q \neq q'$, can be bounded:
\begin{theorem}{Contribution of $I^{(x)}_{m,m'}$ to $I_{m,m'}$\newline}
Let 
\begin{equation}
  J^{(x)}_{*,m,m'} = \sup\limits_{\substack{ q,q' \ \in \ \left\{1, \ 2, \ldots, Q\right\}^2 \\ q \neq q'}} \ J_{*, m,m',q,q'} \ .
\end{equation}
Then,
\begin{equation}
   \left| I^{(x)}_{m,m'} \right| \leq \left( Q^2-Q \right)   \ J_{*,m,m'} \ .
\end{equation}
\end{theorem}
\begin{proof}
 The omitted proof follows from direct calculation.
\end{proof}
but for $q = q'$,
\begin{equation}
  \left| J_{m,m',q,q} - \lambda_q \right| \leq  B_{q,1} \ \sqrt{\lambda_q} \ C_* \ M^{2q} \ suspect.
\end{equation}
Then,
\begin{equation}
   \left| I_{m,m'} \right| \leq
    \left( Q^2 - Q \right) \ \times \ \max\limits_{q \neq  q'} J_{*,q,q'} + + Q \lambda_q \ .
\end{equation}

By developing an approximation for
$Ji$ 
\begin{theorem}{J-bound\newline}
  \begin{eqnarray}
    \lefteqn{ \left| J_{m,m',q,q'} - \tilde{J}_{m,m',q,q'} \right|}
  \end{eqnarray}
\end{theorem}
\begin{proof}\newline
  There exist functions, $\tilde{\Gamma}^{(q)}$, and $g_q$, such that 
  \begin{eqnarray}
   \lefteqn{\Gamma^{(q)}(f_1, \ldots, f_{q-1}, f - \sum_{j=1}^{q-1} f_j ) =} && \nonumber \\ 
            && \tilde{\Gamma}^{(q)}(f_1,\ldots,f_{q-1}) g_q\left(f - \sum_{j=1}^{q-1} f_j \right) \ .
  \end{eqnarray}
  Let 
  \begin{eqnarray}
   \lefteqn{\tilde{J}_{m,m',q,q'} =}&& \nonumber \\
       && \int_{-\frac{1}{2}}^\frac{1}{2} \sum_{\substack{m_1,\ldots,m_q =1 \\ m'_1,\ldots, m'_{q'} =1}}^M 
                              \int_{-\frac{1}{2}}^\frac{1}{2} \ldots \int_{-\frac{1}{2}}^\frac{1}{2} \times \nonumber \\
       &&                       \Gamma^{(q)}_{m,m_1, \ldots,m_q}\left(f_1,\ldots,f_{q-1}, f - \sum_{j=1}^{q-1} f_j \right) \times \nonumber \\
       &&                       \Gamma^{(q')*}_{m',m'_1, \ldots,m'_{q'}}\left(f'_1,\ldots,f'_{q-1}, f - \sum_{j'=1}^{q'-1} f'_{j'} \right) \times \nonumber \\
       &&                       U_q\left(  f - \sum_{j=1}^{q-1} f_j \right) \prod_{j=1}^{q-1} U( f_j ) \times \nonumber \\
       &&                       U^*_{q'}\left(  f' - \sum_{j'=1}^{q'-1} f'_{j'} \right) \prod_{j'=1}^{q'-1} U^*( f'_{j'} ) \times \nonumber \\
       &&                       df_1 \ldots df_{q-1} \ \ df \ .
  \end{eqnarray}

\end{proof}
} 
\comment
{
\begin{definition}{$W$ -- Limited Transfer}
\end{definition}

\begin{definition}{$\epsilon$ -- DPSS Conformal System\newline}
  Let $S$ be the $Q^{\rm th}$ order system specified by (\ref{eqn:volterra_td}) 
with $M'$ outputs, and parameterized by $\gamma^{(q)}$, $q = 1, \ldots, Q$.
  Let $K$ of the $M$ channel inputs at time-index $t$ be $v^{(j)}_t$, $j = 1, \ldots, K$. For the $M-K$ remaining channels, the input
  is set to zero for all $t$.
  Further, let $\epsilon > 0$, such that $\left| I_{m,m'} \right| < \epsilon \left| I_{m,m} \right|$, for all $m,m' = 1, \ldots, M'$, $m \neq m'$.
  Then $S$ is an $\epsilon$ -- DPSS conformal system.
\end{definition}
\begin{definition}{$q^{\rm th}$ order, $W, \eta$ Constant Volterra Kernel}
  Let
  \begin{equation} 
  \end{equation}
\end{definition}
\begin{theorem}{Single $\epsilon$ -- DPSS Conformal System Equivalence\newline}
  Sets of $\epsilon$ -- DPSS conformal systems are equivalent to a single $\epsilon$ -- DPSS conformal system. 
\end{theorem}
\begin{definition}{$\epsilon$ -- DPSS orthogonal}
\end{definition}
\begin{theorem}{An $\epsilon$ -- DPSS Conformal System is $\epsilon$ -- DPSS Orthogonal\newline}
  Output is orthogonal to DPSSs to level $\epsilon$.
\end{theorem}
} 

} 

\section{Orthogonality of MIMO Response to DPSS Input} 
  \label{sect:ortho}
The system output $Y_m(f)$ is approximated by the contributions from 
$T_{m,1}(f)$ and $T_{m,2}(f)$,
\begin{eqnarray}
  \lefteqn{ \left| Y_m(f) - T_{m,1}(f) - T_{m,2}(f) \right| \leq   } && \hspace{4.25cm} y_m^{(0)} + \sum_{j=3}^Q \left| T_{m,j}(f) \right| \ . \nonumber \\
  \label{eqn:azaz}
\end{eqnarray}
As in \ref{sec:content2}, consider the situation where 
the DC response $y_m^{(0)}$ is set to zero and the bound \eqref{eqn:JJ_approx} is exact (i.e. $\rm \delta' = 0$).
As a consequence of the higher order Volterra kernel suppression demonstrated 
in \ref{sec:content2}, the system output $Y_m(f)$ can be further bounded:
\begin{eqnarray}
  \lefteqn{ \left| Y_m(f) - T_{m,1}(f) - T_{m,2}(f) \right| \leq   } && \hspace{4.0cm} \sum_{j=3}^Q \left| T_{m,j}(f) \right| \ , \nonumber \\
  &\leq& \sum_{j=3}^Q C_{m,M,j}(f,\lambda_{min},W,V_{M,*},\Gamma^{(j)}_{m,*},\Gamma_{m,*}^{(j )'}, \Gamma^{(j)}_{m,**}({\bf 0},f)) \ . \nonumber \\
\end{eqnarray}

In general the Volterra expansion of $\mathcal{H}$ may have a finite or infinite number of terms. Any Volterra
expansion, however, may be represented as an infinite series by adding kernels of value zero if needed.
This representation is referred to as the infinite Volterra expansion.
\begin{definition}{Exponential DPSS System\newline}
  \label{def:exp_dpss_sys}
  Let $\mathcal{H}$ be a nonlinear, time-invariant MIMO system equal to the Volterra expansion \eqref{eqn:volterra_td0} in the limit as
  $Q \rightarrow \infty$. 
  Then the generalized frequency response functions 
of $\mathcal{H}$ define functions $\Gamma_{m,*}^{(j)}$ \eqref{eqn:key_gamma1}, $\Gamma_{m,**}^{(j)}$ \eqref{eqn:gamma_**}, 
and $\Gamma_{m,*}^{(j)'}$  \eqref{eqn:in-band-sys-rem}. $\mathcal{H}$ is an {\bf exponential DPSS system} if there exist 
constants $\alpha$, $\beta$ and $\gamma$ such that 
  \begin{eqnarray}
    \Gamma_{m,*}^{(j)} &\leq& \alpha^j / j! \ , \nonumber \\
    \Gamma_{m,**}^{(j)}({\bf 0},f) &\leq& \beta^j / j! \ , \nonumber \\
    \Gamma_{m,*}^{(j )'}       &\leq & \gamma^j / j! \ , \nonumber \\
    \label{eqn:alpha-exp-constraints}
  \end{eqnarray}
  for all $j \in \mathbb{Z}^+$ and all $m \in \{ 1, \ \ldots, \ M' \}$.
\end{definition}
\begin{definition}{$\epsilon$-Quadratic System\newline}
Fix an $\epsilon>0$ and a Volterra system $\mathcal{H}$. Let $T_{m,1}(f)$ and $T_{m,2}(f)$ be the first two Volterra 
kernel responses contributing to the $m^{th}$ output channel of $\mathcal{H}$, as defined in Equation \eqref{eqn:volterra_fd}. 
Then $\mathcal{H}$ is an {\bf $\bm \epsilon$-Quadratic System} if for every output channel $m\in \{1, ..., M'\}$ the channel output $Y_m(f)$ satisfies
  \begin{equation}
     \left| Y_m(f) - T_{m,1}(f) - T_{m,2}(f) \right| \leq \epsilon. \nonumber
  \end{equation}
\end{definition}
\begin{theorem}{Exponential DPSS System, $\epsilon$-Quadratic Equivalence\newline}
  Let $\mathcal{H}$ be an exponential DPSS System. Then there exists an $\epsilon>0$ such that $\mathcal{H}$ is an $\epsilon$-quadratic system.
  \label{lemma:exp_dpss_eps_quad}
\end{theorem}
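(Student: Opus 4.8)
The plan is to push the per-order estimate \eqref{eqn:tmq_f_bnd} through the tail of the infinite Volterra expansion and exploit the $1/j!$ decay built into Definition~\ref{def:exp_dpss_sys}: the factorial overwhelms the $M^{j}$ and geometric growth in that bound, so the tail $\sum_{j\ge 3}\big|T_{m,j}(f)\big|$ is dominated by pieces of exponential series and is therefore finite, uniformly in $m$ and $f$. That finite value serves as $\epsilon$.

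First I would record that, since $\mathcal{H}$ is the infinite Volterra expansion and (as in Section~\ref{sect:ortho}) $y_m^{(0)}=0$ and $\delta'=0$,
\[
\big|Y_m(f)-T_{m,1}(f)-T_{m,2}(f)\big|\ \le\ \sum_{j=3}^{\infty}\big|T_{m,j}(f)\big|\ \le\ \sum_{j=3}^{\infty} C_{m,M,j}\big(f,\lambda_{min},W,V_{M,*},\Gamma^{(j)}_{m,*},\Gamma^{(j)'}_{m,*},\Gamma^{(j)}_{m,**}(\mathbf{0},f)\big),
\]
and then substitute the three-term form of $C_{m,M,j}$ displayed on the right of \eqref{eqn:tmq_f_bnd}, namely a bound of the shape $(1-\lambda_{min})^{j/2}V_{M,*}M^{j}\Gamma^{(j)}_{m,*}+2^{(j-2)/2}W^{3j/2-2}M^{j}\Gamma^{(j)'}_{m,*}+(2W)^{(j-2)/2}M^{j}\Gamma^{(j)}_{m,**}(\mathbf{0},f)$, together with the exponential-DPSS inequalities $\Gamma^{(j)}_{m,*}\le\alpha^{j}/j!$, $\Gamma^{(j)'}_{m,*}\le\gamma^{j}/j!$, $\Gamma^{(j)}_{m,**}(\mathbf{0},f)\le\beta^{j}/j!$ of Definition~\ref{def:exp_dpss_sys}, which hold for every $j$, every $f$, and every $m$.

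The one computational step is to regroup the $W$ and $2$ powers so that each resulting sum is a tail of $e^{x}=\sum_{j\ge0}x^{j}/j!$. Using $2^{(j-2)/2}W^{3j/2-2}=\tfrac12 W^{-2}\,(\sqrt2\,W^{3/2})^{j}$ and $(2W)^{(j-2)/2}=(2W)^{-1}(\sqrt{2W})^{j}$, and bounding $\sum_{j\ge3}$ by $\sum_{j\ge0}$ (all terms nonnegative), one gets
\[
\sum_{j=3}^{\infty}\big|T_{m,j}(f)\big|\ \le\ V_{M,*}\,e^{M\alpha\sqrt{1-\lambda_{min}}}\;+\;\frac{1}{2W^{2}}\,e^{\sqrt2\,W^{3/2}M\gamma}\;+\;\frac{1}{2W}\,e^{\sqrt{2W}\,M\beta}\;=:\;\epsilon_{0}.
\]
Because $0<W<\tfrac12$, $0\le 1-\lambda_{min}\le 1$, and $M,M',\alpha,\beta,\gamma$ are finite, $\epsilon_{0}$ is a finite positive constant independent of $f$ and $m$. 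Taking $\epsilon=\epsilon_{0}$ (equivalently the maximum of the right-hand side over $m\in\{1,\dots,M'\}$, which is the same since the hypotheses are $m$-uniform) gives $\big|Y_m(f)-T_{m,1}(f)-T_{m,2}(f)\big|\le\epsilon$ for all $m$ and $f$, i.e. $\mathcal{H}$ is $\epsilon$-quadratic.

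There is no serious obstacle here: the entire content is the observation that the $1/j!$ factor in the exponential-DPSS constraints dominates the $M^{j}$ and geometric factors in \eqref{eqn:tmq_f_bnd}. The only care points are (i) starting the sum at $j=3$, so the smallest power of $W$ ever produced is $W^{5/2}$ and no negative power of a small quantity appears inside a series; and (ii) observing that the $f$-dependence enters the bound only through $\Gamma^{(j)}_{m,**}(\mathbf{0},f)$, which is controlled uniformly in $f$ by $\beta^{j}/j!$, so $\epsilon$ is genuinely a constant rather than a function of $f$. The absolute convergence just established also retroactively justifies writing $Y_m(f)=\sum_{q\ge1}T_{m,q}(f)$ and manipulating its tail term by term.
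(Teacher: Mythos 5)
Your proposal is correct and follows essentially the same route as the paper's proof: sum the per-order bound \eqref{eqn:tmq_f_bnd} over $j\ge 3$, insert the factorial constraints of Definition~\ref{def:exp_dpss_sys}, and regroup the powers of $2$ and $W$ so each sum is controlled by an exponential series, uniformly in $m$ and $f$. The only difference is cosmetic: you bound the tails by the full exponentials $e^{x}$, whereas the paper retains the tails exactly as $e^{x}$ minus the first few terms of the series, giving a slightly tighter (but equally qualitative) value of $\epsilon$.
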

\begin{proof}
The proof follows from direct calcuation.  Consider,
\begin{eqnarray}
  \lefteqn{ \left| Y_m(f) - T_{m,1}(f) - T_{m,2} \right| \leq   } && \nonumber \\
  && 
  V_{M,*} 
    \sum_{j=3}^\infty \Gamma_{m,*}^{(j)} M^j \left( 1 - \lambda_{min} \right)^{j/2}  \ + \nonumber \\
    && W^{-2} \sum_{j=3}^\infty \Gamma_{m,*}^{(j )'} \ 2^\frac{j-1}{2} \ M^j \ W^\frac{3j}{2}    \ + \nonumber \\
  &&          \sum_{j=3}^\infty \Gamma_{m,**}^{(j)}({\bf 0},f) \ M^j \ \left( 2 W \right)^{\frac{j-2}{2}} \ , \nonumber \\
  &\leq&
  V_{M,*} 
    \sum_{j=3}^\infty 
        \alpha^j / j! M^j \left( 1 - \lambda_{min} \right)^{j/2}  \ + \nonumber \\
    && 2^{-\frac{1}{2}} W^{-2} \sum_{j=3}^\infty \gamma^j/j! 2^\frac{j}{2} \ M^j \ W^\frac{3j}{2}    \ + \nonumber \\
  &&    (2 W)^{-1}  \sum_{j=3}^\infty \beta^j / j! M^j \left( 2 W \right)^{\frac{j}{2}} \ . \nonumber \\
\end{eqnarray}
Continuing,
\begin{eqnarray}
  \lefteqn{ \left| Y_m(f) - T_{m,1}(f) - T_{m,2}(f) \right| \leq } && \nonumber \\
  && V_{M,*} \bigg[ e^{ \alpha M \sqrt{ 1 - \lambda_{min}}} - 
              \left( 1 + \alpha M \sqrt{ 1 - \lambda_{min}} + \right.\nonumber \\
        && \left. \alpha^2 M^2 (1-\lambda_{min}) \right) \bigg] + \nonumber \\
  && W^{-2}2^{-\frac{1}{2}} \bigg[ e^{ \sqrt{2} \gamma M W^\frac{3}{2}} \ - \nonumber \\
  && \left( 1 + \sqrt{2} \gamma M W^\frac{3}{2} + 2 \gamma^2 M^2 W^3 \right) \bigg] \ + \nonumber \\
  && (2W)^{-1} \bigg[ e^{ \sqrt{2W} \beta M } \ - \nonumber \\
  && \left( 1 + \sqrt{2W} \beta M + 2W \beta^2 M^2 \right) \bigg] \ , \nonumber \\
  &\equiv& \epsilon \ .
\end{eqnarray}
\end{proof}
\begin{remark}{Linear System Response\newline}
  An $\epsilon$-Quadratic System driven by DPSSs approximates a linear system for $W \ll 1$.  The
  system response in the time-domain due to the first-order Volterra kernel is,
  \begin{eqnarray}
      \lefteqn{\left| \mathcal{F}^{-1}\big\{ T_{m,1}(f) \big\}_t \right| =} && \nonumber \\
        && \bigg| \sum_{m_1=1}^M 
              \int_{-\frac{1}{2}}^\frac{1}{2} 
                \Gamma^{(1)}_{m,m_1}( f ) V_{m_1}(f ) e^{i 2 \pi f t} \ df \bigg| \ , \nonumber \\
        &\approx& \bigg| \sum_{m_1=1}^M \Gamma^{(1)}_{m,m_1  }( 0 ) \int_{-W}^W  V_{m_1}(  f  ) e^{i 2 \pi f t} \ df \bigg| \ , \nonumber \\
        &\leq&  \sum_{m_1=1}^M \bigg| \Gamma^{(1)}_{m,m_1  }( 0 )\bigg| \ \int_{-W}^W  \ \bigg| V_{m_1}(  f  ) \bigg|  \ df \ , \nonumber \\
        &=& O\left( \sqrt{W} \right) \ ,
  \end{eqnarray}
  since
  \begin{eqnarray}
    \int_{-W}^W \left| V_{m_1}(f) \right|^2 \ df  &\approx& 1 \ , \nonumber \\
    \left|V_{m_1}(f)  \right|^2 &\approx& \frac{1}{2W} \ , \nonumber \\
    \left|V_{m_1}(f) \right|    &\approx& \left( 2W \right)^{-\frac{1}{2}} \ , \nonumber \\
  \end{eqnarray}
  and
  \begin{eqnarray}
\int_{-W}^W \bigg| V_{m_1}( f) \bigg| \ df &\approx&  2W \big( 2W\big)^{-\frac{1}{2}} \ , \nonumber \\
                                              &=& \sqrt{2W} \ .
  \end{eqnarray}
  Similarly,
  \begin{eqnarray}
      \lefteqn{\left| \mathcal{F}^{-1}\big\{ T_{m,2}(f) \big\}_t \right| =} && \nonumber \\
        && \bigg| \sum_{m_1,m_2=1}^M 
              \int_{-\frac{1}{2}}^\frac{1}{2} \int_{-\frac{1}{2}}^\frac{1}{2} 
                \Gamma^{(2)}_{m,m_1,m_2}( f_1, f-f_1) \times \nonumber \\
            && \hspace{.2cm} V_{m_1}(f-f_1) V_{m_2}(f_1) e^{i 2 \pi f t} \ df_1 df \bigg| \ , \nonumber \\
        &\approx& \bigg| \sum_{m_1,m_2=1}^M \Gamma^{(2)}_{m,m_1,m_2}( 0, 0 ) \times \nonumber \\
          && \int_{-W}^W \int_{-W}^W  V_{m_1}(f-f_1) V_{m_2}(f_1)  e^{i 2 \pi f t} \ df_1 df \bigg| \ , \nonumber \\
        &\leq& \sum_{m_1,m_2=1}^M \bigg| \Gamma^{(2)}_{m,m_1,m_2}( 0, 0 ) \bigg| \times \nonumber \\
          && \int_{-W}^W \int_{-W}^W  \ \bigg| V_{m_1}(f-f_1) V_{m_2}(f_1)  \bigg| \ df_1 df \ , \nonumber \\
         &=& O\left( W \right) \ .
  \end{eqnarray}   
 \qed
\end{remark}

\comment
{
An Exponential DPSS 
\begin{definition}{$\epsilon$-Time Domain Linear System\newline}
Fix an $\epsilon>0$ and a Volterra system $\mathcal{H}$. Let $\gamma_{m,1}(t)$ 
be the first Volterra 
kernel response  contributing to the $m^{th}$ output channel of $\mathcal{H}$, as defined in Equation \eqref{eqn:volterra_td0}. 
Then $\mathcal{H}$ is an {\bf $\bm \epsilon$-Time Domain Linear System} if for every output channel $m\in \{1, ..., M'\}$ the channel output $y_{m,t}$ satisfies
  \begin{equation}
     \left| y_{m,t}  - \sum_{m'=1}^M \sum_{t'=1}^\infty \gamma^{(1)}_{m,m',t'} u_{m',t-t'} \right| \leq \epsilon. \nonumber
  \end{equation}
\end{definition}
\begin{theorem}{$\epsilon$-Quadratic System, $\epsilon'$ Time-Domain Linear System Equivalence} 
  Let $\mathcal{H}$ be an $\epsilon$-Quadratic System. Then there exists an $\epsilon>0$ such that $\mathcal{H}$ is an $\epsilon$-quadratic system.
  \label{lemma:quad_linear_equiv}
\end{theorem}
\begin{proof}
\end{proof}
} 

{\color{black}As motivated in the introduction, }
in parameter identification applications aimed at determining the existence of connections between MIMO system inputs and outputs, 
the focus is upon the inner-product, $I_{m,m'}$, 
between the $m$ channel ouput, $Y_m(f)$, of the $\epsilon$--Exponential DPSS system and the $m'$-order test DPSS.
Let,
\begin{eqnarray}
  I_{m,m'} = \int_{-\frac{1}{2}}^\frac{1}{2} Y_m(f) V_{m'}^*(f) \ df \ .
\end{eqnarray}
Once again, split the integral into in-band and out-of-band components:
\begin{eqnarray}
  \lefteqn{I_{m,m'} =}&& \hspace{1.0cm} \int_{-\frac{1}{2}}^\frac{1}{2} Y_{m  }(f) V_{m'}^*(f) \ df \ , \nonumber \\
    &=& 
        \int_{-W}^W Y_m(f) V_{m'}^*(f) \ df \ + 
    \tildeint_{-\frac{1}{2}}^\frac{1}{2} Y_m(f) V_{m'}^*(f) \ df \ , \nonumber \\ 
    &=& I^{(i)}_{m,m'} + I^{(o)}_{m,m'} \ .
\end{eqnarray}
 Because $T_{m,1}(f)$ + $T_{m,2}(f)$ approximates the output of an $\epsilon$-exponential DPSS system, we can now state the following Lemma.
\begin{lemma}
The inner-product, $I_{m,m'}$, can be approximated by the inner product of $V_{m'}$ with the first and second order Volterra responses of $\mathcal{H}$.
\end{lemma}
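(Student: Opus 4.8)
The plan is to reduce the claim to the uniform $\epsilon$-quadratic bound already established. Write $S_m(f) = T_{m,1}(f) + T_{m,2}(f)$. By linearity of the inner product, $I_{m,m'} - \int_{-\frac{1}{2}}^{\frac{1}{2}} S_m(f) V_{m'}^*(f)\, df = \int_{-\frac{1}{2}}^{\frac{1}{2}}\big(Y_m(f) - S_m(f)\big) V_{m'}^*(f)\, df$, so that, pulling the absolute value inside the integral, the approximation error is bounded by $\int_{-\frac{1}{2}}^{\frac{1}{2}} |Y_m(f) - S_m(f)|\,|V_{m'}(f)|\, df$. First I would invoke Theorem \ref{lemma:exp_dpss_eps_quad} (or, for a finite-order system, the bound \eqref{eqn:tmq_f_bnd} summed over $j = 3, \ldots, Q$), which supplies a constant $\epsilon \ge 0$ with $|Y_m(f) - S_m(f)| \le \epsilon$ uniformly in $f \in (-\frac{1}{2},\frac{1}{2})$; substituting gives the bound $\epsilon \int_{-\frac{1}{2}}^{\frac{1}{2}} |V_{m'}(f)|\, df$.

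Second, I would estimate the remaining integral by Cauchy--Schwarz together with the unit-energy normalization of the DPSS, $\int_{-\frac{1}{2}}^{\frac{1}{2}}|V_{m'}(f)|^2\, df = 1$: namely $\int_{-\frac{1}{2}}^{\frac{1}{2}} |V_{m'}(f)|\, df \le \big(\int_{-\frac{1}{2}}^{\frac{1}{2}}|V_{m'}|^2 df\big)^{1/2}\big(\int_{-\frac{1}{2}}^{\frac{1}{2}} df\big)^{1/2} = 1$. Hence $\big| I_{m,m'} - \int_{-\frac{1}{2}}^{\frac{1}{2}}(T_{m,1}(f) + T_{m,2}(f)) V_{m'}^*(f)\, df \big| \le \epsilon$, with $\epsilon$ the explicit constant of Theorem \ref{lemma:exp_dpss_eps_quad}. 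Because that constant is $O(\sqrt{W})$ and shrinks as $\lambda_{min} \to 1$ and as $M$ decreases, this makes precise the sense in which $I_{m,m'}$ is approximated by the first- and second-order Volterra responses in the narrowband regime. If a sharper statement is wanted I would split $I_{m,m'} = I^{(i)}_{m,m'} + I^{(o)}_{m,m'}$ and note the same argument yields $\epsilon\sqrt{2W}$ for the in-band part (using $\int_{-W}^W |V_{m'}|\,df \le \sqrt{2W}$) and an extra $\sqrt{1-\lambda_{m'}}$ factor for the out-of-band part.

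The main obstacle is not this estimate --- which is one application each of the triangle inequality and Cauchy--Schwarz --- but securing the hypotheses under which $\epsilon$ is finite and small. For a genuinely infinite Volterra expansion this requires $\mathcal{H}$ to be an exponential DPSS system in the sense of Definition \ref{def:exp_dpss_sys}, so that the tail $\sum_{j\ge 3}|T_{m,j}(f)|$ converges and is dominated by the closed form appearing in the proof of Theorem \ref{lemma:exp_dpss_eps_quad}; for a finite-order system the argument is unconditional with $\epsilon = \sum_{j=3}^Q C_{m,M,j}(f,\lambda_{min},W,V_{M,*},\Gamma^{(j)}_{m,*},\Gamma_{m,*}^{(j)'},\Gamma^{(j)}_{m,**}({\bf 0},f))$. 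In both cases the dependence of the approximation error on $W$, $M$, and $\lambda_{min}$ is exactly that inherited from \eqref{eqn:tmq_f_bnd}.
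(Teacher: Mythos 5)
Your proposal is correct, and its core estimate is exactly the paper's closing step: the paper also writes the error as $\int_{-\frac{1}{2}}^{\frac{1}{2}}\left|Y_m(f)-T_{m,1}(f)-T_{m,2}(f)\right|\left|V_{m'}^*(f)\right|df$ and controls it with the $\epsilon$-quadratic property, see \eqref{eqn:ii_jj__bnd} (your constant $\epsilon$, obtained from Cauchy--Schwarz with the full-band unit energy of $V_{m'}$, is in fact the rigorous one; the paper's stated $\epsilon\sqrt{\lambda_{m'}}$ implicitly discounts the out-of-band mass of $V_{m'}$ and is only approximately justified). The difference is one of scope: the bulk of the paper's proof is spent not on $\left|I_{m,m'}-X_{m,m'}\right|$ but on quantifying $X_{m,m'}$ itself --- splitting it into in-band and out-of-band, first- and second-order pieces, Taylor expanding $\Gamma^{(1)}_{m,m''}(f)$ about $f=0$, and invoking the in-band DPSWF orthogonality \eqref{eqn:relevant} so that the dominant term is identified as $\Gamma^{(1)}_{m,m'}(0)\lambda_{m'}$, with the remainder controlled by $\Gamma^{(1)'}_{m,**}$, $\Gamma^{(2)}_{m,*}$, $W$, $M$, and $1-\lambda_{min}$, culminating in \eqref{eqn:ip_bnd1J}--\eqref{eqn:ip_bnd1c}. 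Your route proves the lemma as literally stated, more directly and with the hypotheses (exponential DPSS system, or finite order $Q$ with the $C_{m,M,j}$ tail sum) spelled out more carefully than the paper does; what it does not deliver is the explicit form $I_{m,m'}\approx\Gamma^{(1)}_{m,m'}(0)\lambda_{m'}$ and the bound \eqref{eqn:ip_bnd1c}, which is the quantitative content the paper actually uses downstream for the inner-product detector. If you intend your lemma to feed that application, you should append the paper's second stage (or an equivalent Taylor-plus-orthogonality argument) to extract the linear DC coefficient from $X_{m,m'}$.
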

\begin{proof}
Let $X_{m,m'}$ be the inner-product of the  first two Volterra system responses with the input $V_{m'}(f)$.  
The out-of-band component 
$X^{(o)}_{m,m'}$ of the inner product, $X_{m,m'}$ is approximated by
\begin{equation}
  \left| X^{(o)}_{m,m'} \right| = \left| X^{(o,1)}_{m,m'} \right| + \left| X^{(o,2)}_{m,m'} \right| \ , 
\end{equation} 
where
\begin{eqnarray}
  \lefteqn{ \left| X^{(o,1)}_{m,m'} \right|  =  } && \nonumber \\
    &&\left| \sum_{m''=1}^M \tildeint_{-\frac{1}{2}}^\frac{1}{2} \Gamma^{(1)}_{m,m''}( f ) V_{m''}(f) V_{m'}^*(f) \ df \right| \ , \nonumber \\
    &\leq& \Gamma^{(1)}_{*,m} \sum_{m''=1}^M \sqrt{ 1- \lambda_{m''} } \sqrt{ 1 - \lambda_{m'}} \ , \nonumber \\
    &\leq& M \Gamma^{(1)}_{*,m} \left( 1 - \lambda_{min} \right)   \ .
\end{eqnarray}
The second-order term is bounded in a similar fashion,
\begin{eqnarray}
  \lefteqn{ \left| X^{(o,2)}_{m,m'} \right|  =  } && \nonumber \\
    &&\bigg| \sum_{m_1, m_2=1}^M \tildeint_{-\frac{1}{2}}^\frac{1}{2} \bigg[ \int_{-W}^W \Gamma^{(2)}_{m,m_1, m_2}( f_1, f-f_1 ) \times \nonumber \\
      && \hspace{2cm} V_{m_2}(f-f_1) V_{m_1}( f_1 ) df_1 \bigg] V_{m'}^*(f) \ df \bigg| \ , \nonumber \\
    &\leq& M^2 \Gamma^{(2)}_{*,m} \sqrt{ 1 - \lambda_{m'}} \ . 
\end{eqnarray}
Then, the in-band contribution $X^{(i)}_{m,m'}$ to $X_{m,m'}$ is approximated as:
\begin{equation}
  \left| X^{(i)}_{m,m'} \right| = \left| X^{(i,1)}_{m,m'} \right| + \left| X^{(i,2)}_{m,m'} \right| \ .
\end{equation}
By Taylor expanding the linear transfer function $\Gamma^{(1)}_{m,m''}(f)$, the in-band contribution $X^{(i,1)}_{m,m'}$ 
can be expressed as
\begin{eqnarray}
  \lefteqn{X^{(i,1)}_{m,m'} = } && \nonumber \\
        && \int_{-W}^W \sum_{m''=1}^M \Gamma^{(1)}_{m,m''}(f) V_{m''}(f) V_{m'}^*(f) \ df \ ,  \nonumber \\
        &=&   
           \bigg( \sum_{m''=1}^M \Gamma^{(1)}_{m,m''}(0) \lambda_{m'} \delta_{m',m''} \bigg) + R \ , \nonumber \\
        &=&   
           \Gamma^{(1)}_{m,m'}(0) \lambda_{m'}  + R     \ . \nonumber \\
\end{eqnarray}
%
%
The remainder $R$, due to Taylor's theorem is,
\begin{equation}
  R    = \sum_{m''=1}^M        
          \frac{d\Gamma^{(1)}_{m,m''}(\zeta) }{d\zeta} \int_{-W}^W  f V_{m''}(f) V_{m'}^*(f) \ df \ , 
  \label{eqn:r}
\end{equation}
for some $\zeta \in (0,f)$.  
Next, the second order contribution can be upper bounded as follows:
\begin{eqnarray}
  \lefteqn{\left| X^{(i,2)}_{m,m'} \right| = } && \nonumber \\
        && \bigg| \int_{-W}^W \sum_{m_1,m_2=1}^M \Gamma^{(2)}_{m,m_1,m_2}(f_1, f - f_1 ) \times \nonumber \\
        && \hspace{2cm} V_{m_2}(f-f_1) V_{m_1}(f_1) V_{m'}^*(f) \ df_1 df \bigg| \ ,  \nonumber \\
        &\leq&   \sqrt{2W} M^2 \Gamma^{(2)}_{m,*} \ .
  \label{eqn:aaax}
\end{eqnarray}
Then
\begin{eqnarray}
  \lefteqn{ \left| X^{(i)}_{m,m'} - \Gamma^{(1)}_{m,m'}(0) \lambda_{m'} \right| -  
                      \sqrt{2W} M^2 \Gamma^{(2)}_{m,*} \leq \ } && \hspace{5.5cm} \ \left| R \right| 
           \ , \nonumber \\
  &\leq& \sum_{m''=1}^M        
          \left| \frac{d\Gamma^{(1)}_{m,m''}(\zeta)}{d\zeta}\right| \int_{-W}^W \left| f V_{m''}(f) V_{m'}^*(f) \right| \ df \ , \nonumber \\
  &\leq& W \sum_{m''=1}^M        
          \left| \frac{d\Gamma^{(1)}_{m,m''}(\zeta)}{d\zeta}\right| \int_{-W}^W \left| V_{m''}(f) V_{m'}^*(f) \right| \ df \ , \nonumber \\
  &\leq& W \sum_{m''=1}^M        
          \left| \Gamma^{(1)'}_{m,m''}(\zeta)\right| \sqrt{ \lambda_{m''} \lambda_{m'}} \ , \nonumber \\
  &\leq& W \sum_{m''=1}^M \left| \frac{d\Gamma^{(1) }_{m,m''}(\zeta)}{d\zeta} \right| \ .
          \label{eqn:above197}
\end{eqnarray}
Moving $\sqrt{2W} M^2 \Gamma^{(2)}_{m,*}$ to the right-hand side of \eqref{eqn:above197} yields
\begin{eqnarray}
  \label{eqn:ip_bnd1J}
  \lefteqn{\left| X_{m,m'} - \Gamma^{(1)}_{m,m'}(0) \lambda_{m'} \right|  \leq} && \nonumber \\
    && |R| +  \sqrt{2W} M^2 \Gamma^{(2)}_{m,*} + \nonumber \\
    && \hspace{0cm}  M \Gamma^{(1)}_{m,*} \left(1-\lambda_{min}\right) 
                + M^2 \Gamma^{(2)}_{m,*} \sqrt{ 1 - \lambda_{m'}} \ , \nonumber \\
    &\leq& W\sum_{m''=1}^M \left| \Gamma^{(1)'}_{m,m''}( \zeta ) \right| + \nonumber \\
    && \left( \sqrt{2W} +  \sqrt{1-\lambda_{m'}}\right) M^2 \Gamma^{(2)}_{m,*} \ +   \nonumber \\
    && M \Gamma^{(1)}_{m,*} \left( 1 - \lambda_{min} \right) \ .
\end{eqnarray}
Finally,
\begin{eqnarray}
  \label{eqn:ii_jj__bnd}
  \lefteqn{\left| I_{m,m'} - X_{m,m'} \right| \leq} && \nonumber \\
    && \int_{-\frac{1}{2}}^\frac{1}{2} \left| Y_m(f') - T_{m,1}(f') - T_{m,2}(f') \right| \ \times \nonumber \\
    && \hspace{2cm} \left| V_{m'}^*(f') \right| \ df' \ , \nonumber \\
    &\leq& \epsilon \sqrt{ \lambda_{m'}} \ ,
\end{eqnarray}
so that
\begin{eqnarray}
  \lefteqn{\left| I_{m,m'} - \Gamma^{(1)}_{m,m'}(0) \lambda_{m'} \right| \leq} && \nonumber \\
  && \epsilon \sqrt{\lambda_{m'}} + W\sum_{m''=1}^M \left| \frac{d\Gamma^{(1) }_{m,m''}( \zeta )}{d\zeta} \right| + \nonumber \\
 && M \Gamma^{(1)}_{*,m,m''} \left( 1 - \lambda_{min} \right) \ , \nonumber \\
  &\leq& \epsilon \sqrt{\lambda_{min}} + W M        \Gamma^{(1)'}_{m,**}          + M \Gamma^{(1)}_{*,m,m''} \left( 1 - \lambda_{min} \right) + \nonumber \\
  &&  \left( \sqrt{2W} +  \sqrt{1-\lambda_{m'}}\right) M^2 \Gamma^{(2)}_{m,*} \ .
  \label{eqn:ip_bnd1c}
\end{eqnarray}
Here
\begin{equation}
   \Gamma^{(1)'}_{m,**} = \sup\limits_{\substack{ \zeta \in (0,f) \\ m'' \in {1,2,\ldots,M}}} 
      \left| \frac{d\Gamma^{(1) }_{m,m''}( \zeta )}{d\zeta} \right| \ .
  \label{eqn:gamma_**_prime}
\end{equation}
\end{proof}
Eqn. (\ref{eqn:ip_bnd1c}) establishes an upper bound on the difference between the 
inner-product of the system output on channel $m$ with the DPSS input on channel $m'$ for the case $\delta' = 0$ (see \eqref{eqn:T_bnd1}).  
This situation is only approximately valid (see \eqref{eqn:JJ_approx}, and Eqns. \eqref{eqn:aaax}-\eqref{eqn:ip_bnd1c} are more generally ($\delta' \neq 0$) approximate.
The bound \eqref{eqn:ip_bnd1c} provides conditions under which the DPSS remain orthogonal, even after passing through $\mathcal{H}$.  
As described in the Introduction, and as will be demonstrated in \ref{sect:sys_id} below, this result provides the basis for a detector capable of separating the relative influences of inputs on the system outputs.

\comment
{
While it is possible to use (\ref{eqn:ip_bnd1}) to approximate the inner product using the previously 
established bounds, a tighter bound is available that makes further use of the DPSWF orthogonality to 
that,
\begin{eqnarray}
  \lefteqn{\left| I_{m,m'} - \Gamma^{(1)}_{m,m'}(0) \lambda_{m'} \right|  \leq} && \nonumber \\
    && . \nonumber \\
\end{eqnarray}
} 
\comment
{
\begin{definition}{$\eta$--Linear, Memoryless DPSS System\newline}
  \label{def:exp_dpss_sys}
  Let $\eta > 0$.  Let $m$ be an output channel of a system responding to DPSS system inputs, 
  $v_t^{(j)}$, $j = 1, \ 2, \ldots, M$, with Volterra response $w_{m,t}$ specified in (\ref{eqn:volterra_id}),
  such that 
  \begin{equation}
    \lim\limits_{Q\rightarrow \infty} \left| Y_m^{(Q)}(f) - \Gamma^{(1)}_{m,m'}(0) \sum_{m'=1}^M V_{m'}(f) \right| \leq \eta  \ .
  \end{equation}
  Then $w_{m,t}$ is the response of a $\eta$-linear, memoryless DPSS system.
\end{definition}
\begin{theorem}{$\epsilon$-Exponential DPSS \& $\eta$--Linear, Memoryless System Equivalence\newline}
  Let $Y_m(f)$ be the response on channel $m$ of an exponential DPSS system.  Then there exists an $\eta$, 
  \begin{eqnarray}
    \lefteqn{\epsilon =} && \nonumber \\
  && V_{M,*} \bigg[ e^{ \alpha M \sqrt{ 1 - \lambda_{min}}} - \left( 1 + \alpha M \sqrt{ 1 - \lambda_{min}} \right) \bigg] + \nonumber \\
  && W^{-2}2^{-1} \bigg[ e^{ 2 \gamma M W^2} - \left( 1 + 2 \gamma M W^2 \right) \bigg] + \nonumber \\
  && V_{M,**} \bigg[ \left( v_0^{(*)}\right)^{-1} \bigg( e^{\beta M v_0^{(*)}} - \left( 1 + \beta M v_0^{(*)} \right) \bigg) + \nonumber \\
  &&        \left( 1 - \lambda_{min}\right)^{-1/2} \bigg( e^{\beta M \sqrt{ 1 - \lambda_{min} }} - \left( 1 + \beta M \sqrt{1 - \lambda_{min}}\right) \bigg)
    \bigg] \ , \nonumber \\
  \end{eqnarray}
  such that $W_m(f)$ is the output of an $\eta$-linear, memoryless system.
\end{theorem}
\begin{proof}
  Once gain the proof follows from direct calculation.  Consider,
  \begin{eqnarray}
   \lefteqn{\left| T_{m,1}(f) -  \Gamma^{(1)}_{m,m'}(0) \sum_{m'=1}^M V_{m'}(f) \right| =} && \nonumber \\
    && \left| \sum_{m'=1}^M \left( \Gamma_{m,m'}^{(1)}(f) - \Gamma^{(1)}_{m,m'}(0) \right) V_{m'}(f) \right| \ , \nonumber \\
  \end{eqnarray}
\end{proof}
} 

\section{Linear Narrowband Identification}\label{sect:egs}
  \label{sect:sys_id}
The theory in Sections \ref{sec:content2}-\ref{sect:ortho} can be applied for the purposes of 
identifying the existence of linear narrowband connections between inputs and outputs. 
\comment
{
\begin{figure}
	\centering
		\includegraphics[width=0.45\textwidth]{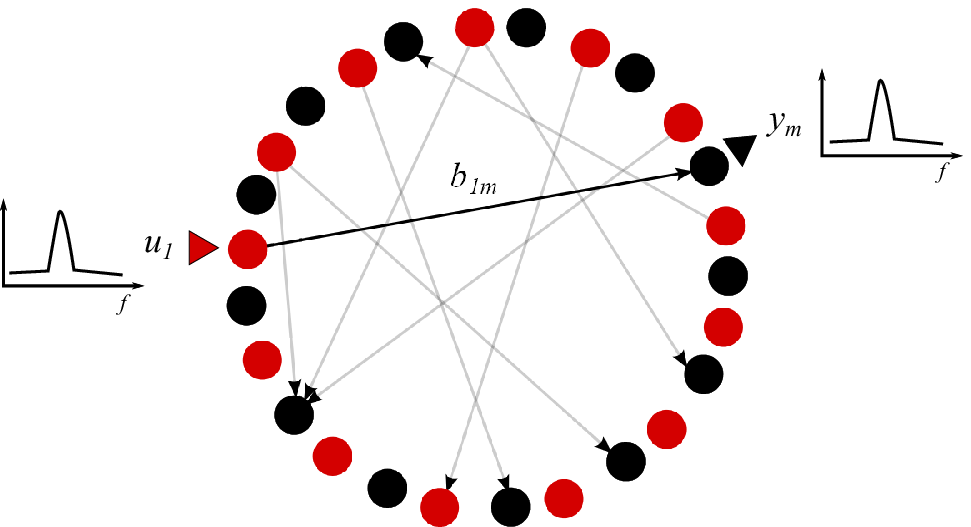}
	\caption{{\bf MIMO interconnection identification.}  
    The narrowband excitation $u_{i,t}$ evokes the system response $y_{j,t}$.  
    The narrowband coupling parameter, $b_{i,j}$, relating $y_{j,t}$ to $u_{i,t}$ is compared against a null
    reponse via the inner-product detector \eqref{eqn:ip_bnd1c}.  It is important to note that
  in this context, full Volterra kernel identification is not the goal.
} 
	\label{fig:mimo_ex}
\end{figure}
} 
Specifically, {\color{black} for an $\epsilon$-quadratic MIMO}, the DPSS can be applied to the input to determine the existence of a {\color{black} a linear narrowband connection }
from input channel $m'$ to output channel $m$ through the use of the inner-product \eqref{eqn:ip_bnd1c}.
Because the inner product of the $\epsilon$-quadratic MIMO system with the DPSS supplied to input $m'$ is approximately
equal to the average, linear impulse response connecting $m'$ with $m$, the inner product determines the existence of in-band linear responses
that do not average to zero (but see Remark 2).
Eqn. (\ref{eqn:ip_bnd1c}) determines the accuracy of this approximation in terms of the quantities listed in
Table \ref{tbl:sys_funcs} and the number of input channels $M$.  

\begin{remark}{Generalization to Non-zero Frequency}\newline
\label{rem_dc_offset}
The results presented in \ref{sec:content1} and \ref{sec:content2} generalize to 
frequency shifted, or modulated DPSS input.
In \ref{sec:content1} and \ref{sec:content2},
instead of Taylor expanding about zero frequency, the Taylor expansions are carried out about
a carrier frequency $f_0$.  That is, the DPSS are multipled by the phase factor  $e^{i 2 \pi f_0 t}$, and the truncated frequency interval changes
from $(-W,W)$ to $(f_0-W, f_0+W)$ (ignoring the contribution from the negative frequencies).  
Thus, the results of this work allow for linear narrowband response at frequency intervals differing from baseband.
\qed
\end{remark}

While it is outside the scope of this paper, note that these conditions are expected to approximate those required for the inner-product detector to achieve the performance of a 
matched-filter when the system response is added to measurement noise prior to observation \cite{scharf1994matched}. This 
scenario is studied in simulation in \ref{sect:sim}, where the DPSS based inner-product detector is found effective.
\begin{remark}{}
\label{rem:DC}
In the theory discussed so far, the zeroth-order Volterra system response, is assumed equal to zero.  Such a response contributes to the system DC offset.  
In situations where this is not guaranteed, restriction to the odd-ordered DPSS ensures that the inner-product will not respond to this offset. \qed
\end{remark}

  \subsection{Linear Narrowband Identification: Simulation}
  \label{sect:sim}
To explore the utility of the proposed method  
two $3^{\rm rd}$ order Volterra SISO systems are simulated. The first system is a null system containing a white, or constant response as a function of frequency, and
the second, or 
alternate, system possesses an elevated response about $2 \ {\rm Hz}$.  In this simulation four methods of detecting a narrowband system response are compared.
Specifically,
the detector responses resulting from null system excitation are compared with detector responses resulting from alternate system excitation.

Each simulation involves $240$ measurements ($n = 240$) and the time-index is an element  in the 
set of time-indices: $t \in  \left\{ 0, \ \ldots, \ n-1\right\}$. 
This set is used in all simulations, each of which involves only a single trial of simulated data. 
The sample period $\Delta$ is equal to $1/30 \ {\rm s}$,  Nyquist frequency $f_N$ is equal to $15 \ {\rm Hz}$, 
the duration of observation is $8 \ {\rm s}$ and the Rayleigh resolution  $f_R$, is equal to $3/8 \ {\rm Hz}$. 
 
Let $\gamma^{(j,n)}_{t}$ be the $j^{\rm th}$ order kernel for the null system
evaluated at time-index $t$, and let $\gamma^{(j,a)}_{t}$ be the $j^{\rm th}$ order kernel for the alternate system.  
The  null system is  specified by
inverse discrete Fourier transforming $\Gamma^{(1,n)}$, set to a constant function of
frequency equal to $3/4$. 
The alternate system is specified by inverse Fourier transforming $\Gamma^{(1,a)}$ specified as:
\begin{equation}
  \Gamma^{(1,a)}(f) = \left\{ \Gamma^{(1,n)}(f) + \begin{array}{ccc} 
      10^{-3} / f^{-3} & , &  3 f_R \leq |f| \leq f_N \\
      10^{-3} / f_R^{-3} & , & |f|  < 3 f_R 
        \end{array} \right.  \ .
\end{equation}
To illustrate the merits of the proposed method
narrowband response detection is compared against the least-squares
kernel identification procedure presented in  
\cite{laguerre_marmarelis1993identification,marmarelis1997modeling}.  
This procedure makes use of a Laguerre polynomial basis.  
For comparison, and to facilitate the specification of the higher-order response functions, 
let $c_k^{(n,1)}$ ($c_k^{(a,1)}$) be the 
$k^{\rm th}$ Laguerre expansion coefficient for the null (alternate) system, multiplying
\begin{eqnarray}
 g_{k,t}  = P_{100(k-1)+1,t} 
\end{eqnarray}
in the representation:
\begin{equation}
  \gamma^{(1,n)}_t = \sum_{k=1}^{50} c^{(1,n)}_k g_{k,t} \ .
  \label{eqn:gamma_lag}
\end{equation}
Here $P_{k,t}$ is the $k^{\rm th}$ order discretized Laguerre polynomial, 
  $P_{k,t}  =  L_k( \Delta t)$, where $L_k$ is the $k^{\rm th}$ Laguerre polynomial:
\begin{equation}
  L_k(x) = \sum_{j=0}^k \frac{(-1)^j }{j!} \binom{k}{j} x^j \ .
\end{equation}
The expansion coefficients, $c_k^{(1,n)}$ ($c_k^{(1,a)}$), $k = 1, \ldots, 50$,  are computed 
as a least-squares solution to \eqref{eqn:gamma_lag} after specifying $\gamma^{(1,n)}$ ($\gamma^{(1,a)}$).
They are plotted in Fig. (\ref{fig:ck2}) (top row).  
The first order kernel is depicted in the time and frequency domains in Fig. (\ref{fig:tf}) (top row).

The second-order kernels, $\gamma^{(2,n)}$,  $\gamma^{(2,a)}$ are specified in a fashion akin to that
for the first-order kernels, save that after computing the expansion coefficients $c_k^{(2,n)}$, $c_k^{(2,a)}$ as above,  
\begin{equation}
  \gamma^{(n,2)}_{t_1,t_2}  = \sum_{k=1}^{50} c_k^{(2,n)} g_{k,t_1} g_{k,t_2} \ .
    \label{eqn:gamma_k2}
\end{equation}
In this procedure $c_k^{(2,a)}$ (Fig. (\ref{fig:ck2})) is chosen such that the linear combination
\begin{equation}
  \sum_{k=1}^{50} c_k^{(2,a)} g_{k,t} \ , 
\end{equation}
plotted in the time (Fig. (\ref{fig:laguerre_k2}), middle row, left) and in the frequency (Fig. (\ref{fig:laguerre_k2}), middle-row, right) domains,
result in the second order kernel shown in Fig. (\ref{fig:tf2}). 
%
\begin{figure}
  \begin{center}
    \includegraphics[width=3.000in]{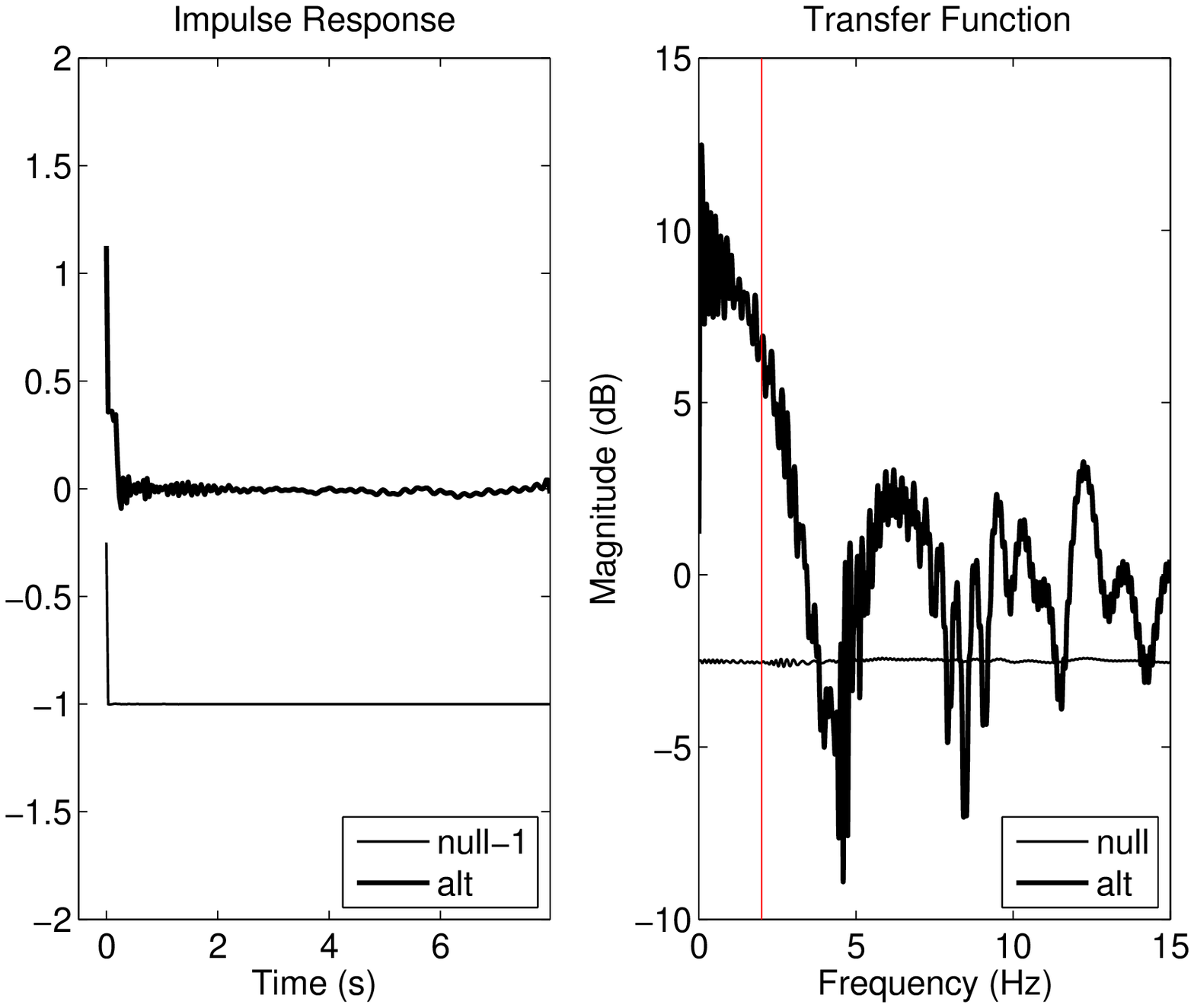}
    \includegraphics[width=3.00in]{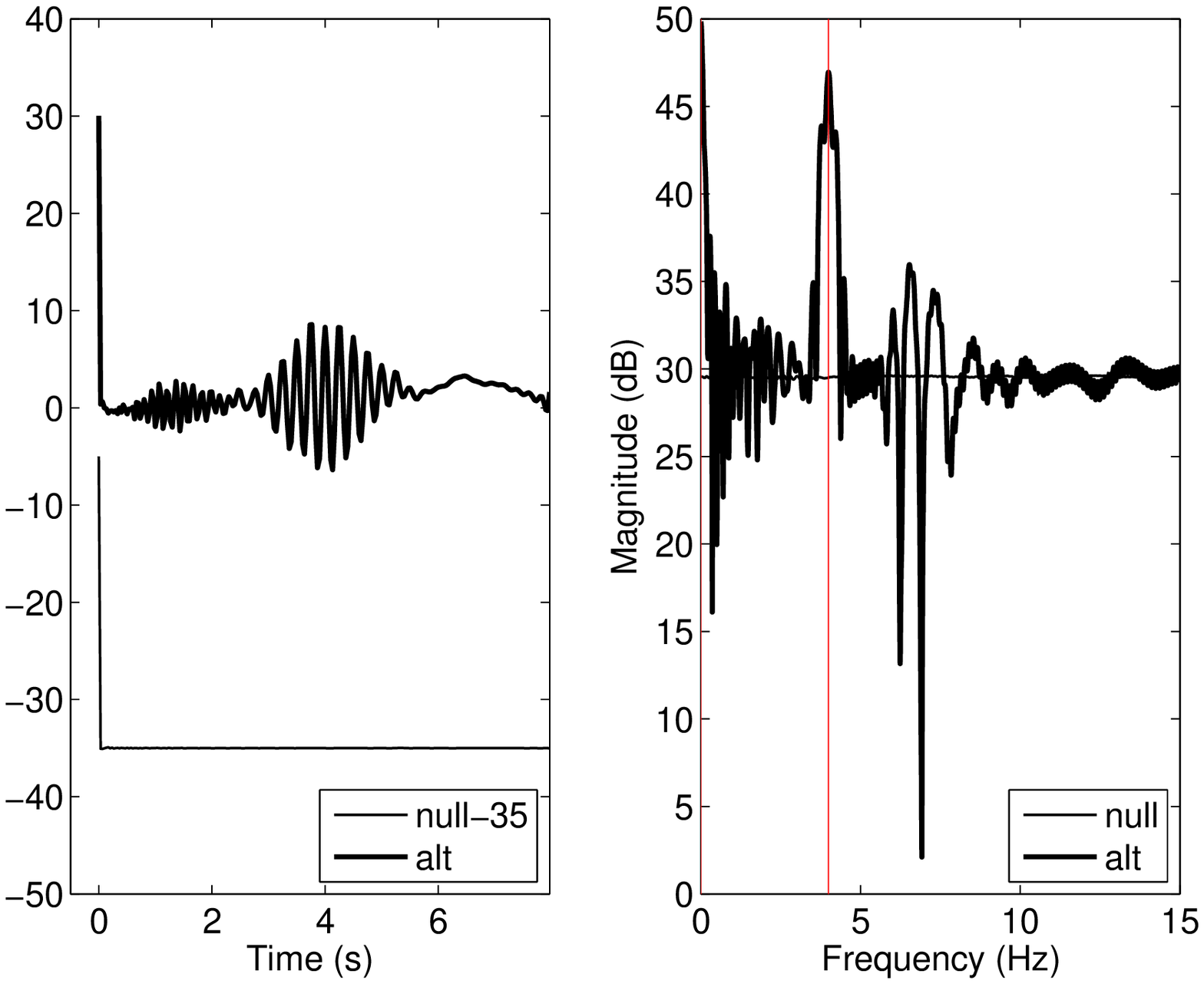} 
    \includegraphics[width=3.00in]{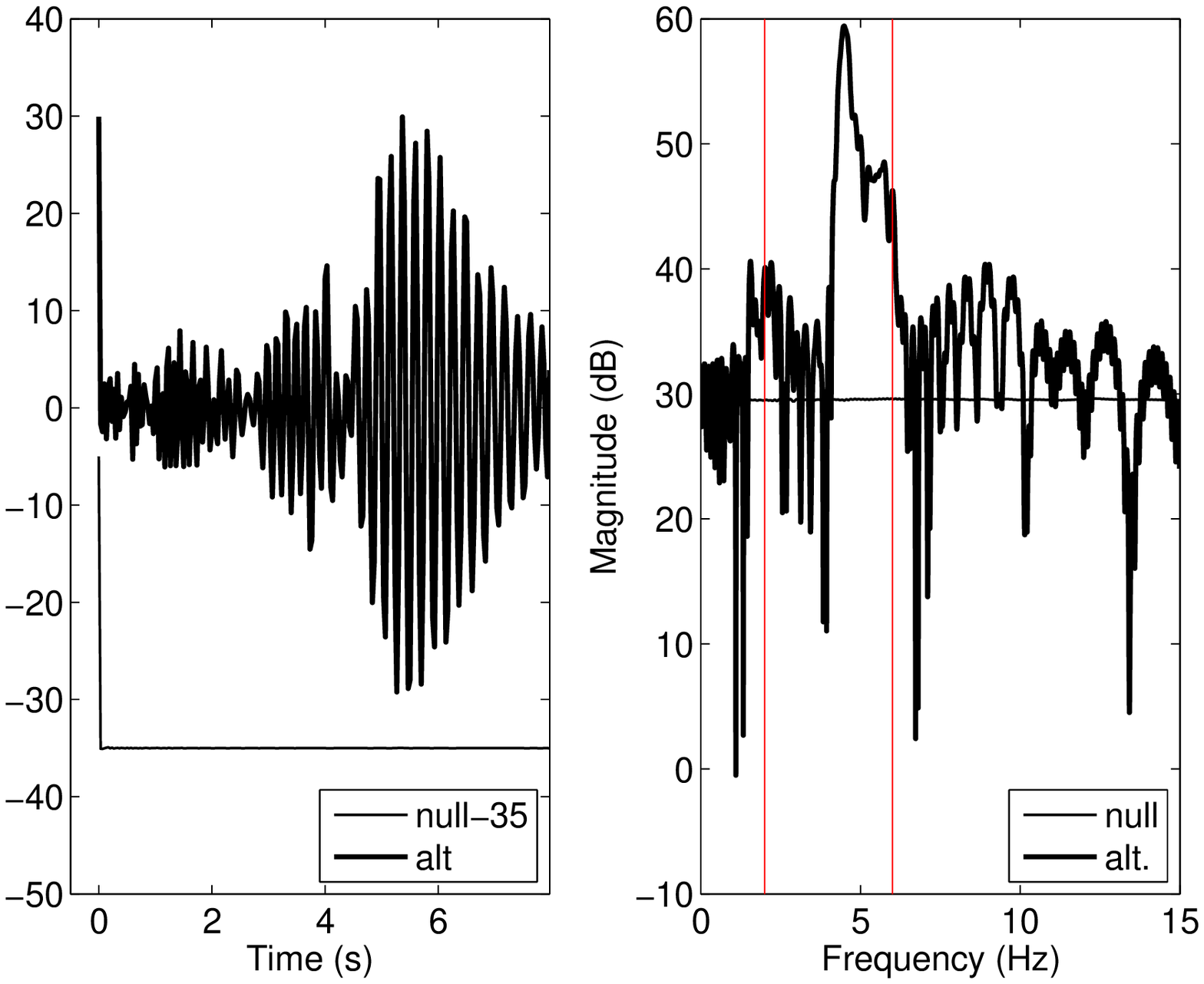} 
  \caption{\label{fig:tf} \label{fig:laguerre_k2} \label{fig_gamma3} {\bf 1D kernel representations.}\newline
    Time domain (left column). Frequency domain (right column).\vspace{.15cm}\newline
    {\bf Top row, left:}   $\gamma^{(1,n)}_t$ (thin), $\gamma^{(1,a)}_t$ (thick). \vspace{.13cm}\newline\vspace{0cm}
      {\bf Top row, right:}   $\Gamma^{(1,n)}$ (thin), $\Gamma^{(1,a)}(f)$ (thick). \newline\vspace{.12cm}
      {\bf Middle row, left:}   $\sum\limits_{k=0}^{50} c^{(2,n)}_k g_{k,t}$ (thin), $\sum\limits_{k=0}^{50} c^{(2,a)}_k g_{k,t}$ (thick). \newline\vspace{.12cm} 
      {\bf Middle row, right:}   Magnitude of DFT of curves plotted on left. \newline\vspace{.12cm} 
      {\bf Bottom row, left:}   $\sum\limits_{k=0}^{50} c^{(3,n)}_k g_{k,t}$ (thin), $\sum\limits_{k=0}^{50} c^{(3,a)}_k g_{k,t}$ (thick). \newline\vspace{.12cm}
      {\bf Bottom row, right:}   Magnitude of DFT of curves plotted on left.  
  } 
  \end{center}
\end{figure}
\begin{figure}
  \begin{center}
  \includegraphics[width=3.20in]{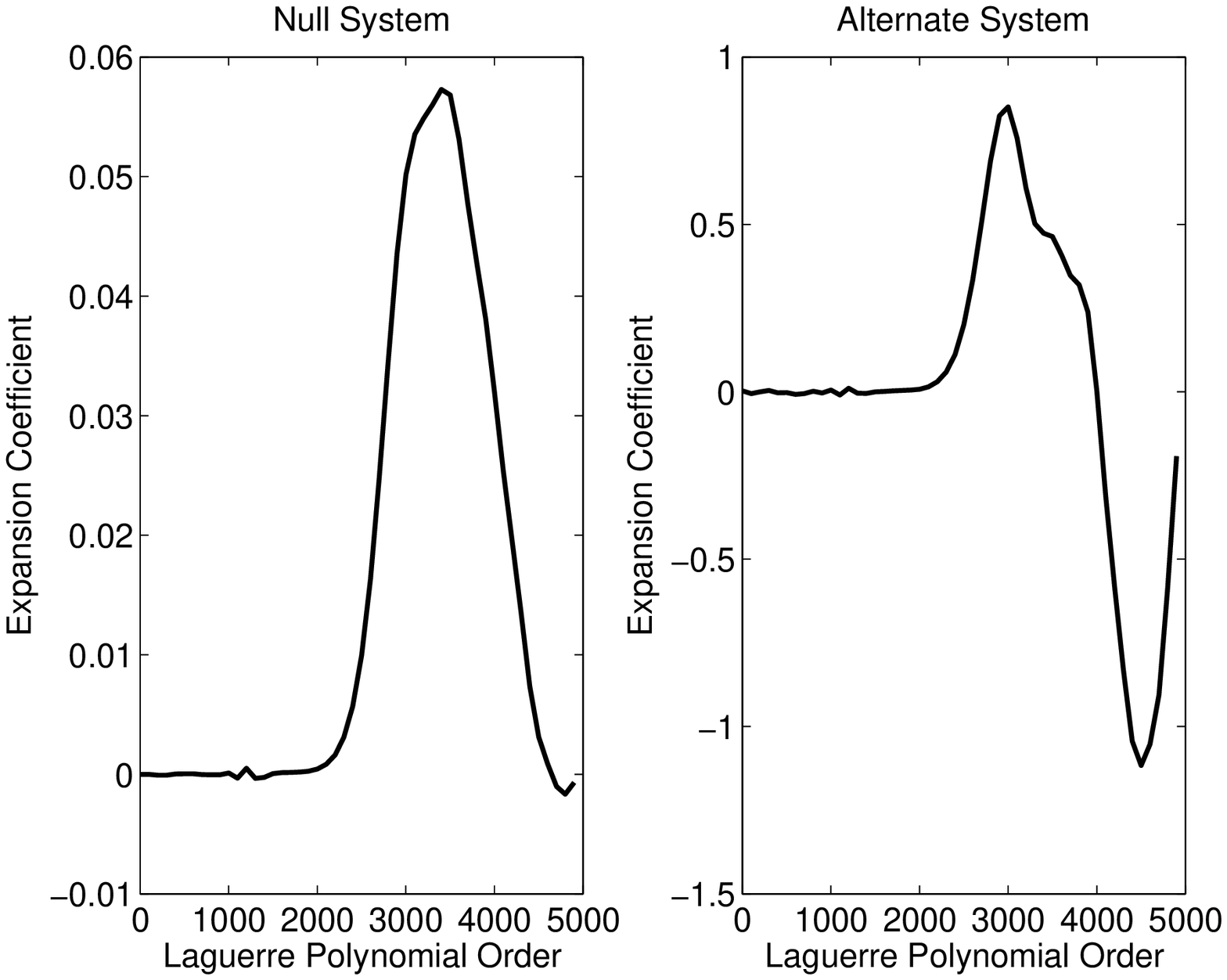}
  \includegraphics[width=3.2in]{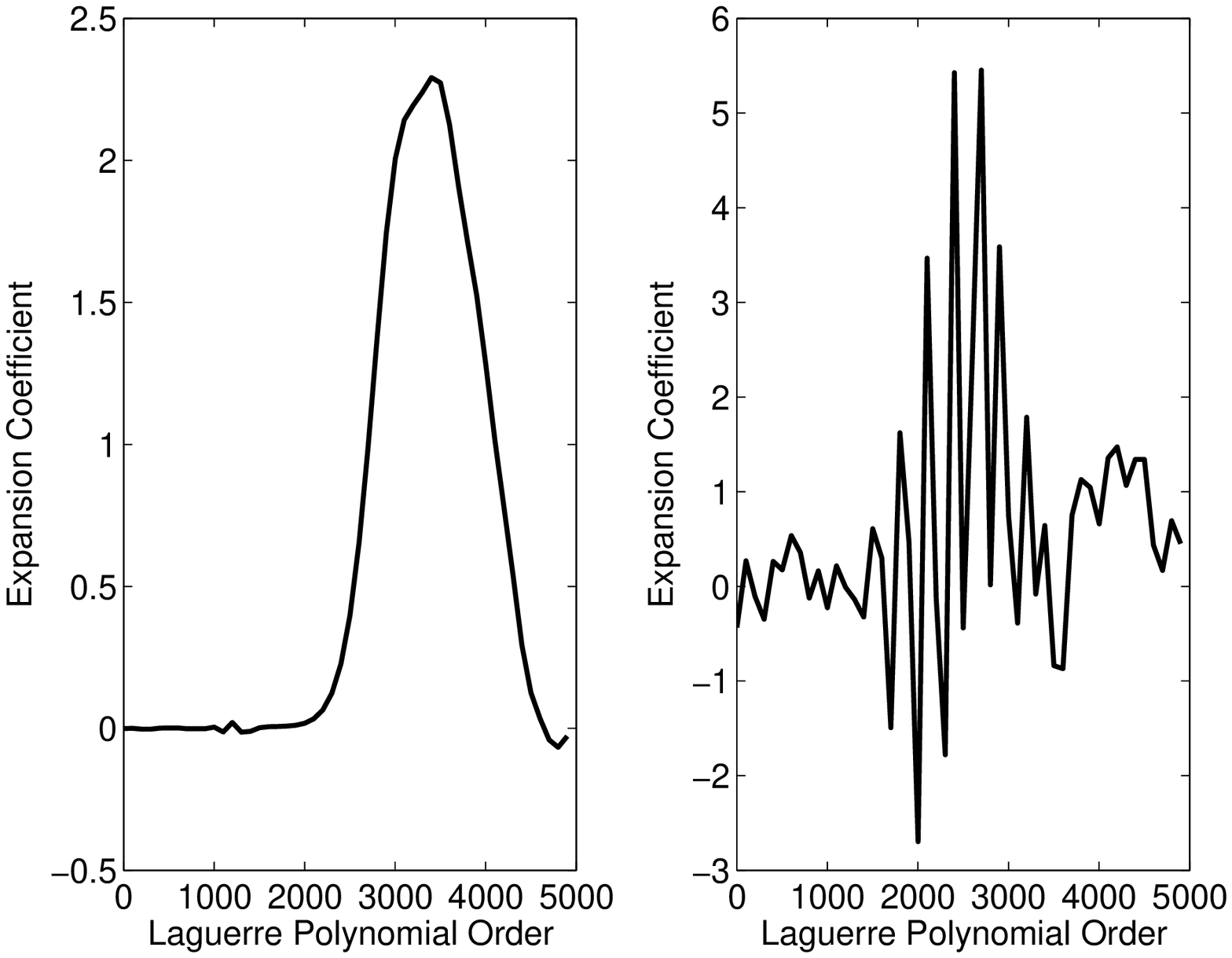} 
  \includegraphics[width=3.20in]{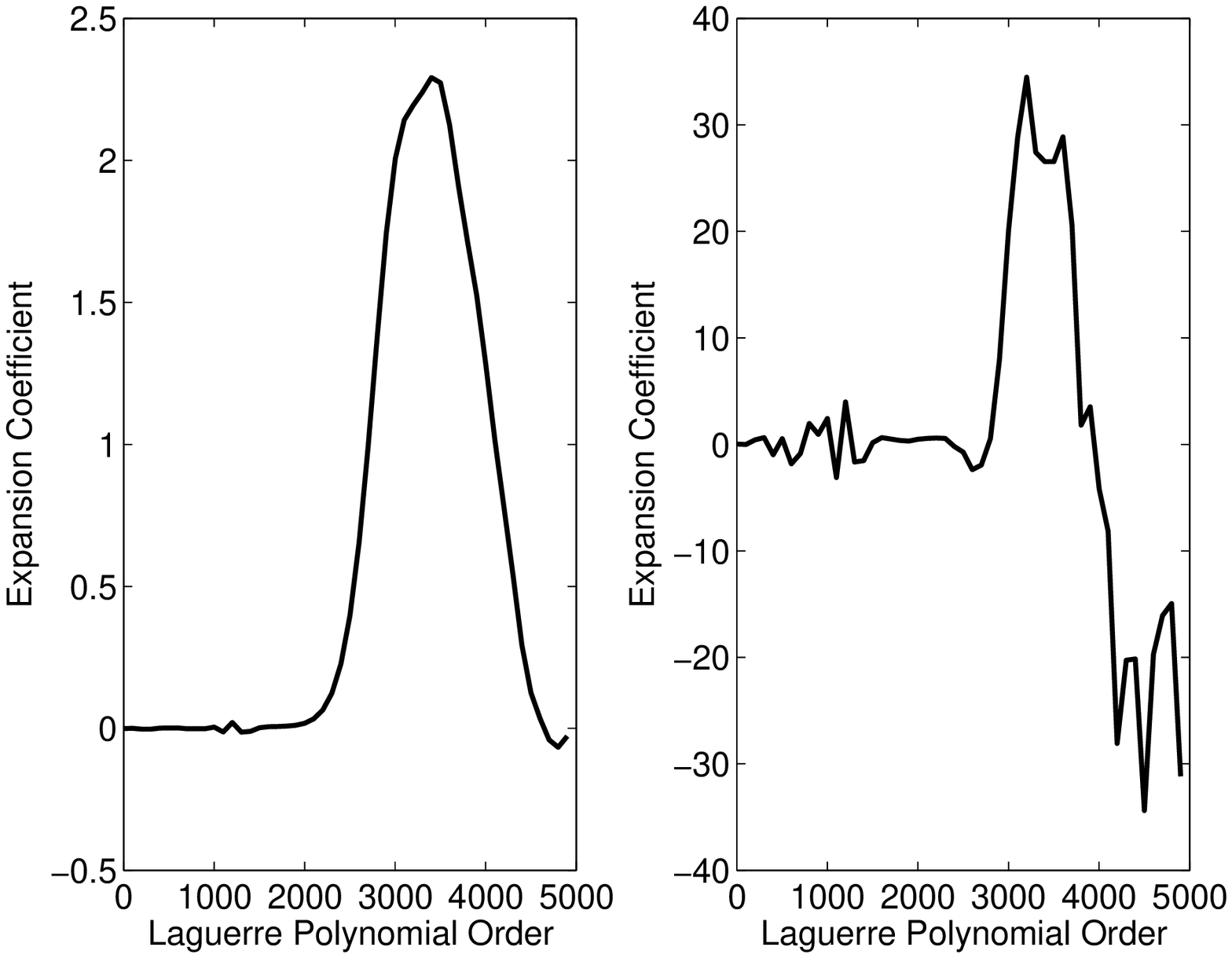} 
  \caption{{\bf Expansion coefficients.} For $k = 1, \ldots, \ 50$:\vspace{.1cm} \newline     
    {\bf Top left:} $c_k^{(1,n)}$. {\bf Top right:} $c_k^{(1,a)}$.\newline \vspace{.1cm}
    {\bf Middle left:} $c_k^{(2,n)}$. {\bf Middle right:} $c_k^{(2,a)}$.\newline \vspace{.1cm}
    {\bf Bottom left:} $c_k^{(3,n)}$. {\bf Bottom right:} $c_k^{(3,a)}$.\newline \vspace{.1cm}
        \label{fig:ck2}
        \label{fig:ck3}
\vspace{2cm}
  }
  \end{center}
\end{figure}
\begin{figure}
  \begin{center}
  \includegraphics[width=3in]{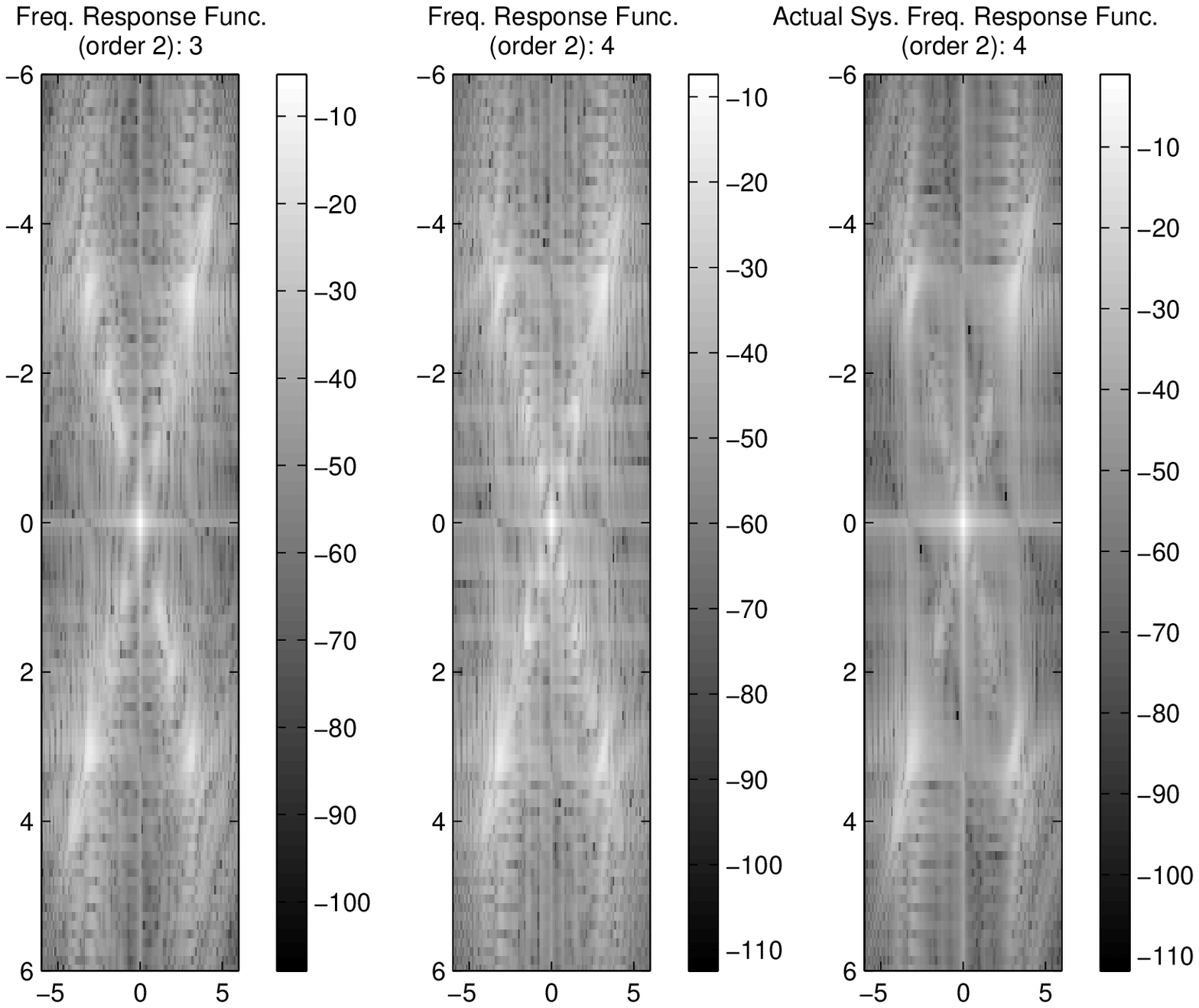}
  \caption{\label{fig:tf2} {\bf The magnitude of the second order general frequency response function, $\left| \Gamma^{(2)}\right|$.} 
  Estimates corresponding to Gaussian white input (left), and M-sequence input (middle). The  
    input signal-to-output-noise ratio (SNRI) is equal to $4 \times 10^6$.  The system response due
    to the second and third order kernels is scaled to have an output sample standard deviation equal to $1.4 \times 10^2$ (variance
    equal to $2 \times 10^4$).  Right: The magnitude of the actual second-order frequency response funciton.
  } 
\end{center}
\end{figure}
The third-order Volterra kernel is specified in a similar fashion:  
\begin{equation}
  \gamma^{(n,3)}_{t_1,t_2,t_3} = \sum_{k=1}^{50} c_k^{(n,3)} g_{k,t_1} g_{k,t_2} g_{k,t_3} \ .
  \label{eqn:gamma_k3}
\end{equation}
The resulting coefficients $c_k^{(n,3)}$, $c_k^{(a,3)}$ are plotted in Fig. (\ref{fig:ck3}) and 
the time and Fourier representations of the associated linear combination of Laguerre polynomials is
plotted in Fig. (\ref{fig:ck3}).  The $3^{\rm rd}$ order kernel for the alternate system is chosen such that 
there is an elevated response at $2 \ {\rm Hz}$ and at $6 \ {\rm Hz}$ such that narrowband system excitation centered
upon $2 \ {\rm Hz}$ excites a strong $3^{\rm rd}$-order Volterra response.

For both the null and alternate systems, narrowband response detection is performed using four methods.  These 
are direct kernel identification (or estimation) using (i) white Gaussian input, and (ii) white M-sequence input.  
Narrowband response excitation results from (iii) an input sequence equal to a sum of sinusoids (SSR) with in-band frequencies, 
and (iv) discrete prolate spheroidal stimulation.  In cases (i)-(iv) the output is added to Gaussian white noise with a variance of
$1$,  and the input signal energy (sum of square sequence elements) is set to one of $4 \times 10^3$, $4 \times 10^4$, or $4 \times 10^6$.  
The higher-order response,
consisting of the response of the second and third order Volterra components, is scaled by one of the factors $2 \times 10^{-6}$, $4 \times 10^{-6}$ or $6 \times 10^{-6}$.  
Ten repetitions of each
input, signal energy and higher-order response scale are performed.  

For input types (i) and (ii) kernel identification is performed assuming a third-order Volterra system using the least-squares
identification procedure specified in \cite{laguerre_marmarelis1993identification}. Unlike in \cite{laguerre_marmarelis1993identification}, 
instead of optimizing the $\alpha$ parameter parameterizing the Laguerre basis (here it is set to zero), the Lagurre expansion is restricted to
include Laguerre polynomials with an order equal to an integer multiple of $100$.\footnote{More accurately, the Associated Laguerre polynomials are
parameterized by $\alpha$.  The Laguerre polynomials result when $\alpha$ equals zero.}  
  For input (i) and (ii), linear narrowband response is taken to be the collection
of values $|\Gamma^{(1,n)}(f)|$ ($|\Gamma^{(1,a)}(f)|$), $f = 2-W, 2-W+df\ \ldots,\ 2+W-df, \ 2+W$. Here $W$ is varied over the values 
$.5 \ {\rm Hz}$, $.75 \ {\rm Hz}$, and $1\ {\rm Hz}$, corresponding respectively to $NW$ equal to $4$, $6$, and $8$.
Due to sampling, input types (i), (ii) depend on $W$; however, for simplicity this dependence is ignored and the results
of simulations involving input (i), (ii) using different values of $W$ are combined.  
In total, $3 \times 3 \times 3 \times 10$ simulations are performed for each of the system types (null \& alternate) resulting in $270$
simulations involving input types (i) and (ii).   For input (iii), (iv) this number is $810$, as every value of $W$ is simulated.  
The SSR input (iii) is comprised of a sum of cosines.  Each
cosine is at a frequency, $f \in (2-W,2+W)$ and the frequencies are spaced by $f_R$.
Input type (iv) is defined in Section \ref{sect:dpss}.  System response to input (iv) is studied and characterized in 
Sections (\ref{sect:result}), (\ref{sect:ortho}).  The narrowband responses to these input are taken
to be the inner-product of the output with the scaled input (scaled to possess an energy equal to $1$).  
\begin{figure}
  \begin{center}
  \includegraphics[width=3in]{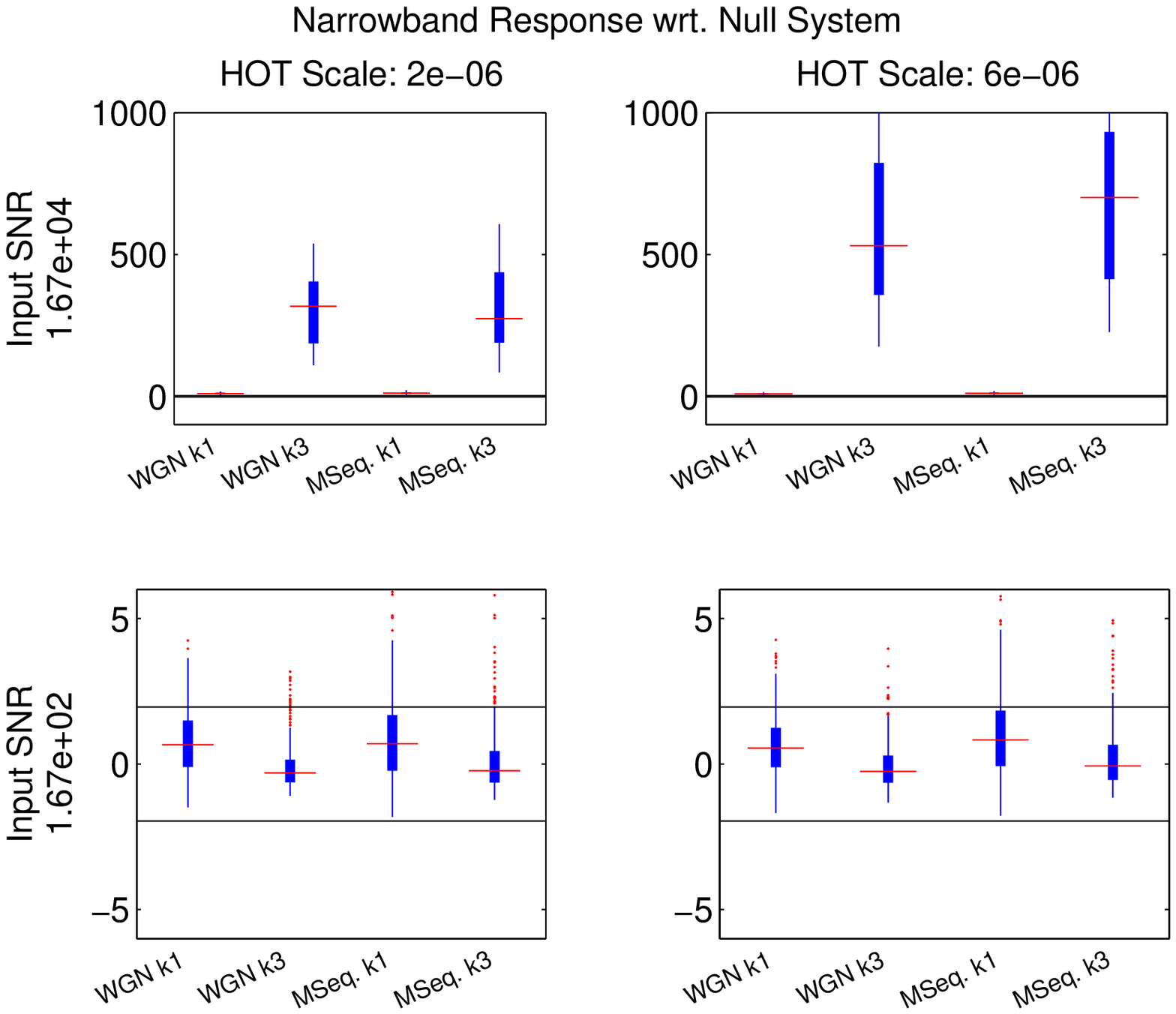}
  \caption{ \label{fig:nb-detect-kernel}
    {\bf Null system normalized magnitude of the collection of in-band generalized frequency responses.}
    Alternate system is detected for large input energy, but not for low-input energy.  Compare to Fig. (\ref{fig:nb-detect}).
    The dependence on higher-order system response is evident.
  } 
\end{center}
For the DPSS stimulation, the DPSSs are modulated to $2 \ {\rm Hz}$ by multiplication with a cosine oscillating with a frequency of $2 \ {\rm Hz}$.\footnote{The DPSSs
in this work are computed using the dpss() MATLAB function.  Literature discussing the generation of the DPSSs includes \cite{Percival&Walden,Thomson1982}.} 
As discussed in Section (\ref{sect:sys_id}), only the odd-ordered DPSS are input to the system, up to a maximum order of $2NW-1$.  The responses associated with
the $j^{\rm th}$ order modulated DPSS are associated, in Fig. (\ref{fig:nb-detect}),  with $v_{(j-1)/2}$, $j \in \left\{0,2,\ldots, 2 \lfloor (2NW-1)/2 \rfloor \right\}$. 
\end{figure}
\begin{figure}
  \begin{center}
  \includegraphics[width=3in]{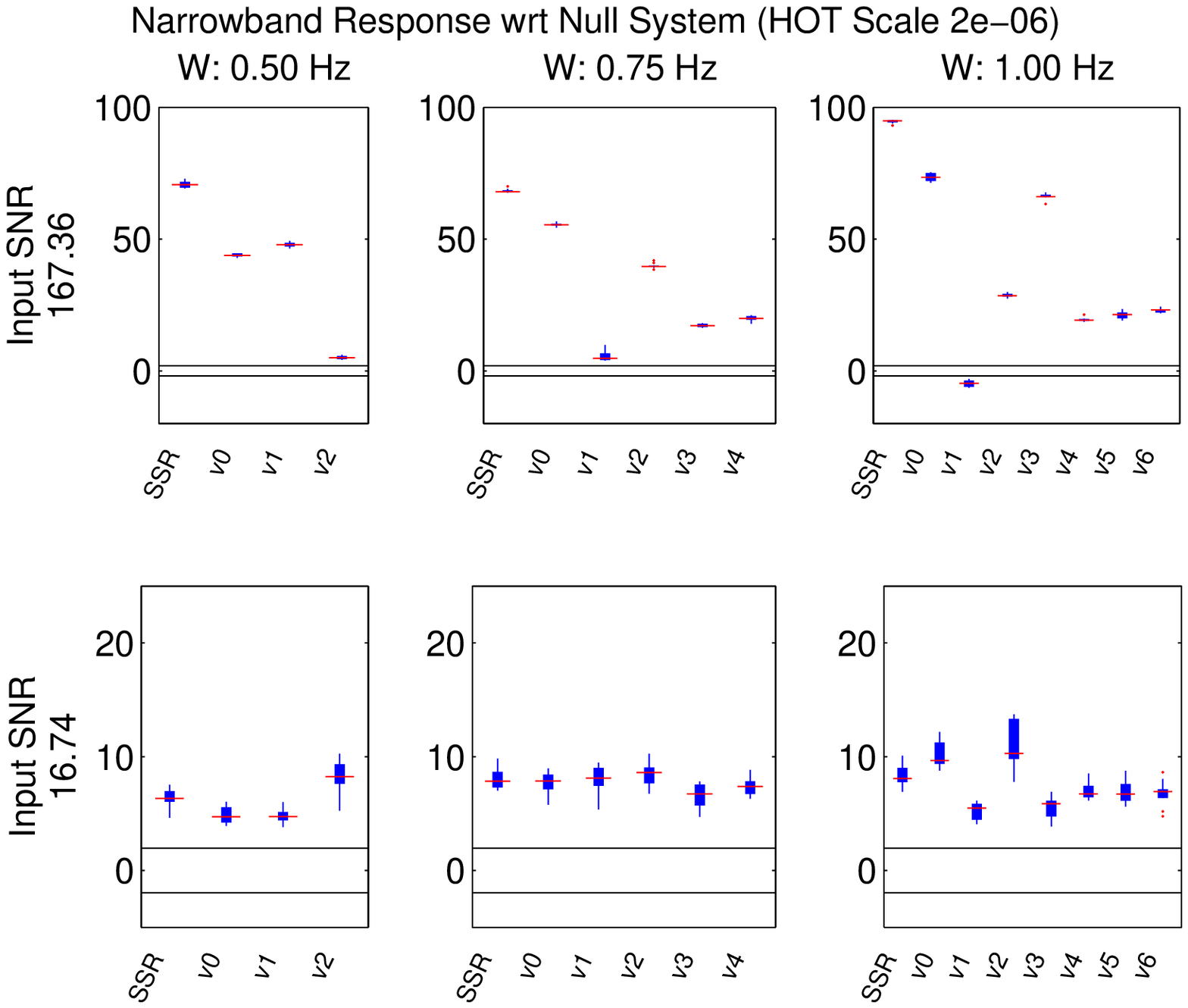}
  \caption{ \label{fig:nb-detect}
    {\bf Null system normalized magnitude of the collection of in-band inner-product detector responses.}
    Alternate system is detected in all cases.  There is a clear dependence on input energy and on $W$.
  } 
\end{center}
\end{figure}
\begin{figure}
  \begin{center}
  \includegraphics[width=3in]{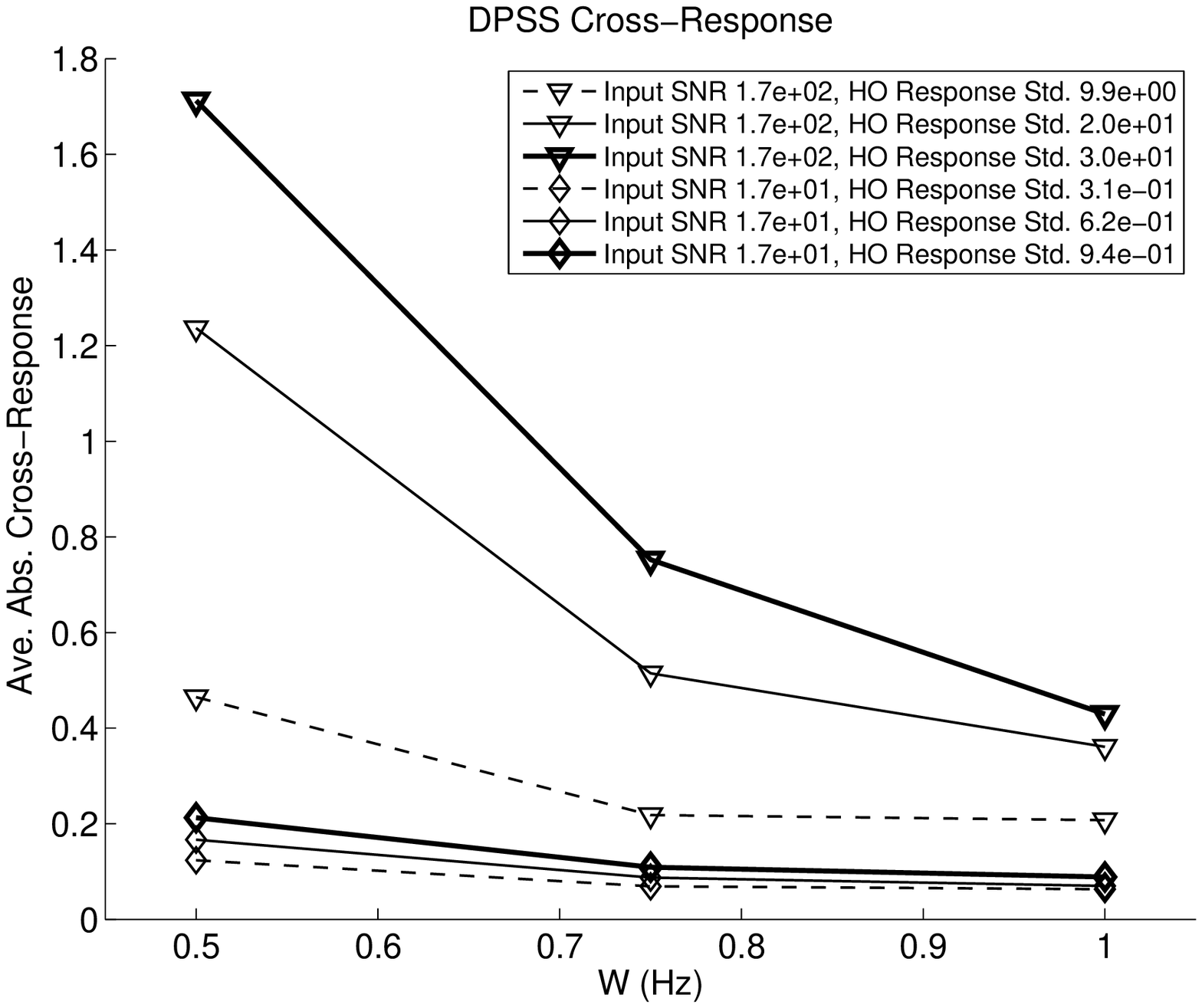}
  \caption{ {\bf DPSS orthogonality depends on $W$, input energy, and on higher-order response.}
    Vertical axis is the sample average of the magnitude of the cross-DPSS products, normalized by the 
    square root of the product of the self-DPSS products.  Here $1-\lambda_j$, for a given DPSS order $j$, decreases with 
    increasing $W$, decreasing out-of-band contribution.
    \label{fig:x}
  } 
\end{center}
\end{figure}

When testing for narrowband response, some performance is due to the choice of detector (choice of hypothesis test).  
To attribute performance to the method
of system identification, for each input method, null system responses are used to normalize the responses of the alternate system.
Let, respectively, $a$, and $\sigma$ be the sample average and the sample standard deviation of the null system responses. 
The alternate system responses
are reduced by $a$ and then divided by $\sigma$ to obtain normalized alternate system responses.
Under the null hypothesis that there is no difference in narrowband response between the null and the alternate systems,
the resulting normalized alternate system responses are realizations of independent standard normal random variables.  Deviations of the
normalized observations from that expected of a standard Gaussian random variable provides evidence in favour of rejecting the hypothesis 
that the observations are the result of the null system.  

The normalized responses for inputs (i) and (ii) are plotted in Fig. (\ref{fig:nb-detect-kernel}).  The responses due to inputs with an 
energy of $4 \times 10^4$ (bottom row) are consistent
with those expected from the null system \footnote{For all plots containing horizontal black lines,  
the probability of lying within the horizontal lines is $.95$ (under the hypothesis that the responses are due to the null system).}, while the responses (top row) associated
with an input energy of $4 \times 10^6$ are not.  The normalized responses for input types (iii) and (iv) are plotted in Fig. (\ref{fig:nb-detect}) (top row: input energy equal
to $4 \times 10^4$, bottom row: input energy equal to $4 \times 10^3$). The normalized inner-product
responses due to the SSR and DPSS input are significantly different from that expected due to the null system. 
Narrowband system response is successfully detected with the inner-product detector using 
input types (iii) and (iv) that is not detected by the kernel identification approach associated with white input.  
The DPSS input yields detections comparable to that of the SSR input, while possessing a larger fraction of in-band energy for $W$ equal to $.5$
or $.75$ (Fig. (\ref{fig:nb-detect})). When $W$ is equal to $1$ the DPSS input yields superior detection performance to that of the SSR input.

Fig. (\ref{fig:x}) depicts the average of the normalized cross-product responses.  Here a cross-product response is
defined to be the absolute value of the inner-product between the response to DPSS stimulation $v_j$ with DPSS $v_{j'}$, $j' \neq j$.
The normalization is performed by dividing the cross-response by the square-root of the absolute value of the product of the self-responses.
The cross-responses become smaller with increasing $W$, with decreasing input signal energy and with decreasing higher-order system response scale.
\comment
{
Each of the
different methods is associated with a different type of input signal.
varying the input signal energy over three values: $.4$, $4$, and $40$.  Similarly, for each of these signal input energies,
the nonlinear contribution to the system response is scaled by $2$, $4$ or $6$.  
}  

\comment
{
Fig. (\ref{fig:tf}) depicts the first order response
The higher-order kernels in the alternate system 
are specified such that 

The three symmetric Volterra kernels characterizing each system are 
specified in terms of $50$ Laguerre polynomials.  The orders of the Laguerre polynomials 
The results of this paper characterize the sense in which the output of a Volterra MIMO system excited by DPSS input
are orthogonal, and further, 
} 


\section{Discussion \& Conclusions}\label{sect:disc}
  In this work, conditions are provided under which nonlinear system output due to discrete prolate spheroidal sequence (DPSS) input
remains approximately orthogonal to other DPSS input.
\comment
{
These conditions, expressed in terms of the Volterra expansion of the nonlinear system,
relate to the degree of system nonlinearity, the extent of system filtering, and the DPSS bandwidth.
The bounds derived in \ref{sec:content1} and \ref{sec:content2} are quantitative
statements resulting from two phenomena: (i) the DPSS are energy $1$ sequences concentrated
within the frequency interval $(-W,W)$. As the DPSS are taken to higher powers, the resulting 
sequence consists of terms that tend to zero and; (ii) due to the in-band energy concentration
of the {\color{black} DPSSs}, filtering effects are limited in the 
frequency domain to multiplicative phase factors within the frequency interval $(-W,W)$.
When the higher order system Volterra 
kernels are small relative to {\color{black} the effect in condition} (i) and are approximately constant over $(-W,W)$, then 
the DPSS remain approximately orthogonal at the system output.\footnote{Because, as in \cite{Thomson:1982}, the product of the DPSWFs 
between even/even, or odd/odd orders is even, when restricting to oddly symmetric DPSSs, this latter condition can
be relaxed to linear within $(-W,W)$, as demonstrated in \ref{sect:sim}.}
Stated differently, under conditions (i) and (ii),
 the in-band dynamics are approximately linear with 
minimal phase perturbation.  In the limiting case of 
a purely static, linear system (as described in \ref{sect:intro}) the output orthogonality is exact.

{\color{red} {\it move to discussion: As discussed in 
\cite{laguerre_marmarelis1993identification,marmarelis1997modeling}, and demonstrated in Fig. (\ref{fig:ck2}), 
for a long-memory system, such as one producing a narrowband response, a Laguerre basis requires a 
large number of non-zero coefficients.}}

This work implies two immediate generalizations which are omitted for brevity. 
Secondly, it follows from the results in this work that linear combinations of DPSSs at a single input can also be 
unmixed at {\color{black} any} output via the inner product operator.
} 
These conditions are developed under two assumptions. Specifically,
that the system admits the Volterra MIMO system representation Eqn. (9) with $y_m^{(0)}$ equal to zero (but see Remark 3), and 
(ii) that Eqn. (32) is an exact bound.  The conditions relate the DPSS bandwidth parameter $W$, the DPSS eigenvalues (through $\lambda_{min}$),
and suprema over the Volterra kernels to higher-order system response.

These properties facilitate  linear narrowband detection in network settings using multiple simultaneous DPSS input, at
both baseband, and higher frequencies.  The performance of these detectors, owing to higher-order nonlinear suppression,
are expected to  approximate matched-filter type detectors with optimal performance where system output is added to noise
prior to observation.  The further development of these notions is left for future study.  

Important limitations to this work are (i) 
due to higher-order nonlinear system response suppression, the utility of the results of this work 
for understanding the full nonlinear system dynamics is expected to be limited and (ii) the results presented in this work require 
  that the Volterra model accurately describes system dynamics.  This precludes, for example, sub-harmonic generation. 

\comment
{
As suggested in \ref{sect:intro}, the results derived herein 
may be of immediate use in domain specific applications in neural engineering. 
Here, a brain network is stimulated to learn its interconnections.  
{\color{black} Other candidate applications } exist in network science, where it is desired to 
attribute certain output effects to one of multiple sources exciting an interconnected system.  
In general, {\color{black}suitable} applications are expected to include situations where system knowledge is limited and it is desirable
 to pass an input through a nonlinear MIMO system relatively unmodified, with maximal energy concentration within a specific frequency interval.
}

In summary, this work provides a mathematical quantification of DPSS stimulation of {\color{black}MIMO Volterra nonlinear systems}.  It 
provides a {\color{black}mathematically principled} foundation from which to further develop nonlinear 
system identification methodology and from which to build applications in engineering and network science. 





\appendices
\section{Generalized Frequency Response Function}
  \label{app:gfrf}
On output channel $m$, the $Q^{\rm th}$-order system response $T_{m,Q}$ as a function of frequency $f$ is
the discrete Fourier transform of the $Q^{\rm th}$ order Volterra response:\footnote{Here the Volterra kernels are indexed at negative lags.  To avoid
an acausal system response, a necessary condition is that these kernels are set equal to zero for negative lag index. With this further stipulation 
\eqref{eqn:T1} is equivalent to \eqref{eqn:volterra_td0}.}
\begin{eqnarray}
  T_{m,Q}(f) 
  &=& \mathcal{F} \bigg\{ 
            \sum_{ {\bf m}_Q = {\bf 1} }^M \sum_{\tau_1,\ldots \tau_Q=-\infty}^\infty \gamma^{(Q)}_{m, {\bf m}_Q, \tau_1, \ldots, \tau_Q} \times \nonumber  \\
        && \hspace{.1cm} u_{m_1, t - \tau_1}  u_{m_2, t - \tau_2} \ \ldots\ u_{m_Q, t - \tau_Q} \bigg\} \ ,  \nonumber \\ 
  &=& \sum_{t=-\infty}^\infty \bigg( 
            \sum_{ {\bf m}_Q = {\bf 1} }^M \sum_{\tau_1,\ldots, \tau_Q=-\infty}^\infty \gamma^{(Q)}_{m, {\bf m}_Q, \tau_1, \ldots, \tau_Q} \times \nonumber  \\
        && \hspace{.1cm} u_{m_1, t - \tau_1}  u_{m_2, t - \tau_2} \ \ldots\ u_{m_Q, t - \tau_Q} \bigg) \nonumber \\ 
        && \hspace{.1cm} e^{-i 2 \pi f t } \ . \nonumber \\
        \label{eqn:T1}
\end{eqnarray}
Substituting the Fourier representation
  \begin{equation}
    u_{m_j,t} = \int_{-\frac{1}{2}}^\frac{1}{2} U_{m_j}(f') \ e^{i 2 \pi f' t} \ df' \ , 
  \end{equation}
  into \eqref{eqn:T1} results in:
  \begin{eqnarray}
        \lefteqn{T_{m,Q}(f)  =}&& \nonumber \\
  && \sum_{t=-\infty}^\infty \bigg( 
            \sum_{ {\bf m}_Q = {\bf 1} }^M 
                  \sum_{\tau_1,\ldots,\tau_Q=-\infty}^\infty 
            \gamma^{(Q)}_{m, {\bf m}_Q, \tau_1, \ldots, \tau_Q} 
                  \times \nonumber  \\
        && \hspace{.1cm} \prod_{j=1}^Q \int_{-\frac{1}{2}}^\frac{1}{2} U_{m_j}(f_j) e^{i 2 \pi f_j (t-\tau_j)} df_j \bigg) e^{-i 2 \pi f t } \ , \nonumber \\
  &=& \sum_{ {\bf m}_Q = {\bf 1} }^M  
                  \sum_{\tau_1,\ldots,\tau_Q=-\infty}^\infty \gamma^{(Q)}_{m, {\bf m}_Q, \tau_1, \ldots, \tau_Q}\times \nonumber  \\
&& \hspace{.1cm}
        \prod_{j=1}^Q \int_{-\frac{1}{2}}^\frac{1}{2} U_{m_j}(f_j) e^{-i 2 \pi f_j \tau_j } \times \nonumber \\
      && \hspace{.1cm} \sum_{t=-\infty}^\infty e^{ -i 2 \pi t \left( f - \sum\limits_{j=1}^Q f_j \right)} \ df_j \ . \nonumber \\
  \end{eqnarray}
  Since $\sum\limits_{t=-\infty}^\infty e^{-i 2 \pi f t}$ is equal to the dirac-delta function $\delta(f)$:
  \begin{eqnarray}
        \lefteqn{T_{m,Q}(f)  =}&& \nonumber \\
  && \sum_{ {\bf m}_Q = {\bf 1} }^M \sum_{\tau_1,\ldots,\tau_Q=-\infty}^\infty \gamma^{(Q)}_{m, {\bf m}_Q, \tau_1, \ldots, \tau_Q} \nonumber \\
&& \hspace{.1cm}
        \prod_{j=1}^Q \int_{-\frac{1}{2}}^\frac{1}{2} U_{m_j}(f_j) e^{-i 2 \pi f_j \tau_j } \ \delta\bigg( f - \sum\limits_{j=1}^Q f_j \bigg) \ df_j \ , \nonumber \\
  &=& \sum_{ {\bf m}_Q = {\bf 1} }^M \int_{-\frac{1}{2}}^\frac{1}{2} U_{m_1}(f_1) \times \ldots \times U_{m_Q}(f_Q)  \ \delta\bigg( f - \sum\limits_{j=1}^Q f_j \bigg)  \nonumber \\
  && \hspace{.1cm} \sum_{\tau_1,\ldots,\tau_Q=-\infty}^\infty \gamma^{(Q)}_{m, {\bf m}_Q, \tau_1, \ldots, \tau_Q} 
        e^{-i 2 \pi \sum\limits_{j=1}^Q f_j \tau_j } \ df_1 \times \ldots \times df_Q \ , \nonumber \\
  &=& \sum_{ {\bf m}_Q = {\bf 1} }^M  
         \int_{ -\frac{1}{2}}^\frac{1}{2} \Gamma^{(Q)}_{m, {\bf m}_Q}(f_1,\ldots,f_{Q-1}, f-\sum\limits_{j=1}^{Q-1} f_j ) \times \nonumber \\
         && \hspace{.1cm} U_{{\bf m}_Q}\bigg(f - \sum_{j=1}^{Q-1} f_j \bigg) 
            \prod_{j=1}^{Q-1} U_{m_j}( f_j ) \ d{\bf f}_{Q-1} \ . \nonumber \\
  \label{eqn:gfrf_end}
\end{eqnarray}
After notational changes, \eqref{eqn:gfrf_end} is identical to \eqref{eqn:tmqf}.

\section{Out-of-band Response of the $Q^{\rm th}$ Order Volterra Kernel}
  \label{app:a}
\begin{eqnarray}
  \lefteqn{\left| T_{m,Q}(f) - T^{(i)}_{m,Q}(f) \right| =}&& \hspace{3cm} \left| T^{(o)}_{m,Q}(f) \right| \nonumber \\
 &=& \bigg| \sum_{ {\bf m}_Q = {\bf 1}_Q }^M \tildeint_{-\frac{1}{2} }^\frac{1}{2}
                        \Gamma^{(Q)}_{m, {\bf m}_Q}\left( {\bf f}_{Q-1}, f - {\bf f}_{Q-1}^T {\bf 1}_{Q-1} \right) \nonumber \\
        && \hspace{.1cm} V_{m_Q}\left( f - {\bf f}_{Q-1}^T {\bf 1}_{Q-1} \right) 
              \prod_{j=1}^{Q-1} V_{m_j}\left( f_j \right)  \ d{\bf f}_{Q-1} \bigg| \ , \nonumber \\
  &\leq&
    \sum_{ {\bf m}_Q = {\bf 1} }^M \tildeint_{-\frac{1}{2} }^\frac{1}{2}
                        \bigg| \Gamma^{(Q)}_{m, {\bf m}_Q}\left( {\bf f}_{Q-1}, f - {\bf f}_{Q-1}^T {\bf 1}_{Q-1} \right) \nonumber \\
        && \hspace{.1cm} V_{m_Q}\left( f - {\bf f}_{Q-1}^T {\bf 1}_{Q-1} \right) \bigg|
              \prod_{j=1}^{Q-1} \bigg| V_{m_j}\left( f_j \right) \bigg|  \ d{\bf f}_{Q-1} \ , \nonumber \\
  &\leq&
    \Gamma^{(Q)}_{m,*} V_{M,*}
    \sum_{ {\bf m}_Q = {\bf 1}_Q }^M \tildeint_{-\frac{1}{2} }^\frac{1}{2}
              \prod_{j=1}^{Q-1} \bigg| V_{m_j}\left( f_j \right) \bigg|  \ d{\bf f}_{Q-1} \ , \nonumber \\
  &\leq&
    \Gamma^{(Q)}_{m,*} V_{M,*}
    \sum_{ {\bf m}_Q = {\bf 1}_Q }^M 
              \prod_{j=1}^{Q-1} \sqrt{ 1 - \lambda_{m_j} } \ , \nonumber \\
  &\leq&
    \Gamma^{(Q)}_{m,*} V_{M,*}
              M^Q \left( 1 - \lambda_{min} \right)^{(Q-1)/2} \ , \nonumber \\
  &=& A_{m,M,Q}(\lambda_{min}, V_{M,*}, \Gamma^{(Q)}_{m,*} )\ .
              \label{eqn:Alambdamin}
\end{eqnarray}
Here
\begin{eqnarray}
  \lefteqn{\Gamma^{(Q)}_{m,*} =} && \nonumber \\
    && \sup\limits_{\substack{ {\bf m}_Q \in \left\{1,2,\ldots,M\right\}^Q \\ {\bf f}_{Q-1} \in (-\frac{1}{2},\frac{1}{2})^{Q-1} \\ f \in (-\frac{1}{2},\frac{1}{2})} } \bigg| \Gamma^{(Q)}_{m, {\bf m}_Q}\left( {\bf f}_{Q-1}, f - {\bf f}_{Q-1}^T {\bf 1}_{Q-1} \right) \bigg| \ , \nonumber \\
\end{eqnarray}
and
\begin{eqnarray}
  \lefteqn{V_{M,*} =} && \nonumber \\
    && \sup\limits_{\substack{      m_Q \in \left\{1,2,\ldots,M\right\}   \\ {\bf f}_{Q-1} \in (-\frac{1}{2},\frac{1}{2})^{Q-1} \\ f \in (-\frac{1}{2},\frac{1}{2})} } 
  \bigg| V_{m_Q}\left( f - {\bf f}_{Q-1}^T {\bf 1}_{Q-1} \right) \bigg| \ . \nonumber \\
\end{eqnarray}
Note that both $\lambda_{min}$, and $V_{M,*}$ depend upon $W$ implicitly.

\section{In-band Response of the $Q^{\rm th}$ Order Volterra Kernel}
  \label{app:b}
\comment
{
Beginning with the $m^{th}$ iterated integral, with $f$ shifted by $\sum\limits_{\substack{j=1 \\ j\neq m}}^{Q-1} f_j$ consider:
\begin{eqnarray}
   \lefteqn{ \int_{-W}^W V_{m_Q}( f -   f_m ) V_m(f_m)  \ df_m = }&& \nonumber \\
  && \sum_{n,n'=0}^{N-1} v_n^{(m_Q)} v_{n'}^{(m)} \int_{-W}^W e^{-i 2 \pi n \left( f - f_m \right)} e^{-i 2 \pi n' f_m } df_m \ , \nonumber \\
  && \sum_{n,n'=0}^{N-1} v_n^{(m_Q)} v_{n'}^{(m)} e^{-i 2 \pi n f} \int_{-W}^W e^{ i 2 \pi f_m \left( n - n' \right)} df_m \ , \nonumber \\
\end{eqnarray}
\hrule
Beginning with the $m^{th}$ iterated integral, with $f$ shifted by $\sum\limits_{\substack{j=1 \\ j\neq m}}^{Q-1} f_j$ consider:
\begin{eqnarray}
   \lefteqn{ \int_{-W}^W V_{m_Q}( f -   f_m ) V_m(f_m)  \ df_m = }&& \nonumber \\
  && \sum_{n,n'=0}^{N-1} v_n^{(m_Q)} v_{n'}^{(m)} \int_{-W}^W e^{-i 2 \pi n \left( f - f_m \right)} e^{-i 2 \pi n' f_m } df_m \ , \nonumber \\
  &=& \sum_{n,n'=0}^{N-1} v_n^{(m_Q)} v_{n'}^{(m)} e^{-i 2 \pi f n } \sinc\big( 2 \pi W ( n - n' ) \big)  \ , \nonumber \\
  &=& \sum_{n =0}^{N-1} v_n^{(m_Q)} e^{-i 2 \pi f n }  \sum_{n'=0}^{N-1} v_{n'}^{(m)}  \sinc\big( 2 \pi W ( n - n' ) \big)  \ . \nonumber \\
  &=& \sum_{n =0}^{N-1} v_n^{(m_Q)} e^{-i 2 \pi f n }  \sum_{n'=0}^{N-1} v_{n'}^{(m)} \sum_{j=0}^{N-1} \lambda_j v_n^{(j)} v_{n'}^{(j)} 
            \ , \nonumber \\
  &=& \sum_{n'=0}^{N-1} v_{n'}^{(m)} \sum_{j=0}^{N-1} \lambda_j v_{n'}^{(j)} \sum_{n =0}^{N-1} v_n^{(m_Q)}  v_n^{(j)} e^{-i 2 \pi f n }  \ , \nonumber \\
  &=& \sum_{n'=0}^{N-1} v_{n'}^{(m)} \sum_{j=0}^{N-1} \lambda_j v_{n'}^{(j)} \int_{-W}^W V_{(m_Q)}(f-f')  V_j(f') df' \ , \nonumber \\
  &=& \lambda_m \sum_{n =0}^{N-1} v_n^{(m_Q)} v_n^{(m)} e^{-i 2 \pi f n }  \ , \nonumber \\
\label{eqn:aab}
  \end{eqnarray}
Substituting the spectral representation of a matrix (A17 in \cite{djt:paleo}),
\begin{equation}
  \sinc\big( 2 \pi W (n - n') \big) = \sum_{j=0}^{N-1} \lambda_j v_n^{(j)} v_{n'}^{(j)} \ ,
\end{equation}
into (\ref{eqn:aab}), obtain
\begin{eqnarray}
   \lefteqn{ \int_{-W}^W V_{m_Q}( f -   f_m ) V_m(f_m)  \ df_m = }&& \nonumber \\
  &=& \sum_{n =0}^{N-1} v_n^{(m_Q)} e^{-i 2 \pi f n }  \sum_{n'=0}^{N-1} v_{n'}^{(m)}  \sum_{j=0}^{N-1} \lambda_j v_n^{(j)} v_{-n'}^{(j)} \ , \nonumber \\
  &=& \sum_{n =0}^{N-1} v_n^{(m_Q)} e^{-i 2 \pi f n }  \sum_{j=0}^{N-1} \lambda_j v_n^{(j)}  \sum_{n'=0}^{N-1} v_{n'}^{(m)} v_{-n'}^{(j)} \ . \nonumber \\
\end{eqnarray}
Because the DPSS are zero outside of the indices $0$ to $N-1$, 
\begin{eqnarray}
   \lefteqn{ \int_{-W}^W V_{m_Q}( f -   f_m ) V_m(f_m)  \ df_m = }&& \nonumber \\
  &=& \sum_{n =0}^{N-1} v_n^{(m_Q)} e^{-i 2 \pi f n }  v_0^{(m)} \sum_{j=0}^{N-1} \lambda_j v_n^{(j)} v_{0}^{(j)} \ , \nonumber \\
  &=& v_0^{(m)} \sum_{n =0}^{N-1} v_n^{(m_Q)}  \sinc\big( 2 \pi W n \big) e^{-i 2 \pi f n } \ , \nonumber \\
  &=& v_0^{(m)} \int_{-W}^W V_{m_Q}( f - f' ) df' \ . \nonumber \\
\end{eqnarray}
Now the $Q-1$ multidimensional integral
\begin{eqnarray}
  \lefteqn{\int_{-W}^{W} V_{m_Q}( f - {\bf f}^T_{Q-1} {\bf  1}_{Q-1} ) \prod_{j=1}^{Q-1} V_{m_j}( f_j ) \ {\bf df}_{Q-1} =} && \nonumber \\
  && v_0^{m_{Q-1}} \int_{-W}^W  \int_{-W}^{W} V_{m_Q}( f - {\bf f}^T_{Q-2} {\bf  1}_{Q-2} - g_1 ) \ dg_1 \times \nonumber \\
      &&  \hspace{3cm} \prod_{j=1}^{Q-2} V_{m_j}( f_j ) \ {\bf df}_{Q-2}   \ , \nonumber          \\
  &=& \left( \prod_{j=1}^{Q-1} v_0^{m_{j}} \right) \int_{-W}^W V_{m_Q}( f - {\bf g}^T_{Q-1} {\bf  1}_{Q-1} ) \ {\bf dg}_{Q-1} \ . \nonumber \\
  \label{dpswf_multidim_integral}
\end{eqnarray}
When $Q=1$ (\ref{dpswf_multidim_integral}) is equal to $V_{m_Q}(f)$.

\hrule
Note that (for e.g., \cite{djt:paleo})
so that
  \begin{equation}
    \lambda_m v^{(m)}_0 =  \left. \sum_{n'=0}^{N-1} v_{n'}^{(m)}  \sinc\big( 2 \pi W ( n + n' ) \big) \right|_{n=0} \ . \nonumber \\
  \end{equation}
The global maximum of $\sinc\big( 2 \pi W ( n + n' ) \big)$ occurs at $n+n'$ equal to zero.
\begin{eqnarray}
  \lefteqn{ 2 \pi W ( n + n' ) = }&& \hspace{2cm} \frac{\ell \pi}{2} \ , \ell = 0, \ \pm 1, \ \ldots \nonumber \\
      n + n' &=& \frac{ \ell  }{ 4 W } \ , \nonumber \\
\end{eqnarray}
the maximum of $ \sinc\big( 2 \pi W( n+n' )\big)$ is 
\hrule
Taylor expanding about $c = 0$, 
  \begin{eqnarray}
    e^{ i 2 \pi   f_m\left( n - n' \right) } = 1 + i 2 \pi f_m n c e^{ i 2 \pi f_m \zeta n  } \ , 
  \end{eqnarray}
  with $\zeta \in (0,c)$.
Now,
  \begin{eqnarray}
   \lefteqn{ \int_{-W}^W V_{m_Q}( f - c f_m ) V_m( f_m)  \ df_m = }&& \nonumber \\
  &=& \sum_{n =0}^{N-1} v_n^{(m_Q)} e^{-i 2 \pi f n} \ \int_{-W}^W V(f_m) df_m  + \nonumber \\
  & & i 2 \pi c \sum_{n   =0}^{N-1} n v_n^{(m_Q)} e^{-i 2 \pi f n}  
      \int_{-W}^W f_m  V_m(f_m) e^{ i 2 \pi f_m \zeta n } df_m  \ . \nonumber
  \end{eqnarray}
Continuing,
  \begin{eqnarray}
  \lefteqn{       \int_{-W}^W V_{m_Q}( f - c f_m ) V_m( f_m)  \ df_m  - V_{m_Q}(f) v^{(m)}_0            = }&& \nonumber \\
  & & i 2 \pi c  \int_{-W}^W f_m  V_m(f_m) \sum_{n   =0}^{N-1} n v_n^{(m_Q)} e^{-i 2 \pi n \left( f - f_m \zeta\right)  }  \ , \nonumber \\
\end{eqnarray}
  \begin{eqnarray}
  \lefteqn{\left| \int_{-W}^W V_{m_Q}( f - c f_m ) V_m( f_m)  \ df_m  - V_{m_Q}(f) v^{(m)}_0 \right| \leq }&& \nonumber \\
  && 2 \pi c \sum_{n   =0}^{N-1} n v_n^{(m_Q)} \left| e^{-i 2 \pi f n} \right| \times \nonumber \\
      && \left| \int_{-W}^W f_m  V_m(f_m) e^{ i 2 \pi f_m \zeta n } df_m \right| \ , \nonumber \\
  &\leq& 2 \pi c \sum_{n=0}^{N-1} n v_n^{(m_Q)} \left| e^{-i 2 \pi f n} \right| \times \nonumber \\
      && \left| \int_{-W}^W f_m  V_m(f_m) e^{ i 2 \pi f_m \zeta n } df_m \right| \ , \nonumber \\
  &\leq& 2 \pi c W \sqrt{\lambda_m} \sum_{n=0}^{N-1} n v_n^{(m_Q)} \left| e^{-i 2 \pi f n} \right| \ . \nonumber \\
  \end{eqnarray}
\begin{lemma}{Approximation of the Shifted DPSWF Inner Product\newline}
  \begin{eqnarray}
    \left| \int_{-W}^W V_{m_Q}( f - {\bf f}_{Q-1}^T {\bf 1}_{Q-1} ) \prod_{j=1}^{Q-1} V_{f_j}(f_j)  \ d{\bf f}_{Q-1} \right| \leq 
      \left[ v^{(*)}_0 + \sqrt{2} W \right]^{Q-1} \ . \nonumber
  \end{eqnarray}
  \label{lemma:V_integral_bnd}
\end{lemma}
Let $II$ 
\begin{eqnarray}
  \lefteqn{ II = } && \nonumber \\
  &=& \int_{-W}^W V_{m_Q}\left( f - {\bf f}_{Q-1}^T {\bf 1}_{Q-1} \right)  \prod_{j=1}^{Q-1} V_{m_j}( f_j ) \ df_j \ , \nonumber \\
\end{eqnarray}
and consider the first iterated integral, $II_1$, over $(-W,W)$:
\begin{eqnarray}
  \lefteqn{ II_1 =}& & \nonumber \\
    && \int_{-W}^W   V_{m_Q}\left( f - c f_1 \right) V_{m_1}( f_1 ) \ df_1 \ ,
\end{eqnarray}
where $c = \sum\limits_{j=2}^{Q-1} f_j$.  Now, substitute the DFT's to obtain
\begin{eqnarray}
  \lefteqn{ II_1 = } && \nonumber \\
    && \int_{-W}^W   \sum_{n=0}^{N-1} v^{(m_Q)}_n e^{-i 2 \pi n \left( f -  c f_1 \right)} \ \sum_{n'=0}^{N-1} v^{m_1}_{n'} e^{-i 2 \pi n' f_1} \ df_1 \ , \nonumber \\
    &=&   \sum_{n,n'=0}^{N-1} v^{(m_Q)}_n v^{(m_1)}_{n'} e^{-i 2 \pi n f}  \int_{-W}^{W} e^{-i 2 \pi f_1 \left( n' - c n \right)} \ df_1 \ , \nonumber \\
    &=&   \sum_{n,n'=0}^{N-1} v^{(m_Q)}_n v^{(m_1)}_{n'} e^{-i 2 \pi n f}  sinc\left( 2 \pi W (n' - c n) \right) \ , \nonumber \\
\end{eqnarray}
\hrule
\vspace{1cm}
} 
The bound in (\ref{eqn:bnd_b}) is established using the multidimensional version of Taylor's remainder theorem applied to the Taylor expansion of
a complex valued function of several real valued variables. Specifically,
\begin{eqnarray}
  \lefteqn{\left| T^{(i)}_{m,Q}(f) - T^{(i)}_{0,m,Q}(f) \right| \leq} && \hspace{3cm}                                 \nonumber \\
 && \bigg| \sum_{ {\bf m}_Q = {\bf 1} }^M \int_{-W}^W
                        {\bf f}_{Q-1}^T \times \nonumber \\
    && \int_0^1 \nabla_{{\bf f}_{Q-1}}
        \Gamma^{(Q)}_{m, {\bf m}_Q}\left( t {\bf f}_{Q-1}, f - t {\bf f}_{Q-1}^T {\bf 1}_{Q-1} \right) dt \times \nonumber \\
&& V_{m_Q}\left( f - {\bf f}_{Q-1}^T {\bf 1}_{Q-1} \right) \ \prod_{j=1}^{Q-1} V_{m_j}( f_j ) \ d{\bf f}_{Q-1} \bigg| \ . \nonumber \\
\end{eqnarray}
More manipulations yield,
\begin{eqnarray}
  \lefteqn{\left| T^{(i)}_{m,Q}(f) - T^{(i)}_{0,m,Q}(f) \right| \leq} && \hspace{3cm}                                 \nonumber \\
&&
      \int_{-W}^W \left| {\bf f}_{Q-1}^T \right| \sum_{ {\bf m}_Q = {\bf 1} }^M \times \nonumber \\
        && \int_0^1 \bigg| \nabla_{{\bf f}_{Q-1}}
        \Gamma^{(Q)}_{m, {\bf m}_Q}\left( t {\bf f}_{Q-1}, f - t {\bf f}_{Q-1}^T {\bf 1}_{Q-1} \right) \bigg|\ dt \times \nonumber \\
&& \bigg| V_{m_Q}\left( f - {\bf f}_{Q-1}^T {\bf 1}_{Q-1} \right) \prod_{j=1}^{Q-1} V_{m_j}(f_j) \bigg| \ \ d{\bf f}_{Q-1} \ , \nonumber \\
&\leq&
    W^{Q-1} \sum_{ {\bf m}_Q = {\bf 1} }^M \int_{-W}^W \times \nonumber \\
  && \int_0^1 \bigg| \nabla_{{\bf f}_{Q-1}}
        \Gamma^{(Q)}_{m, {\bf m}_Q}\left( t {\bf f}_{Q-1}, f - t {\bf f}_{Q-1}^T {\bf 1}_{Q-1} \right) \bigg|\ dt \times \nonumber \\
&& \bigg| V_{m_Q}\left( f - {\bf f}_{Q-1}^T {\bf 1}_{Q-1} \right) \prod_{j=1}^{Q-1} V_{m_j}(f_j) \bigg| \ \ d{\bf f}_{Q-1} \ . \nonumber \\
\label{eqn:tmp2}
\end{eqnarray}
Let 
\begin{eqnarray}
  \lefteqn{\Gamma^{(Q)'}_{m,*} =} && \nonumber \\
  && \sup_{\substack{ {\bf m}_Q \in \left\{1,\ \ldots, \ M\right\}^{Q} \\ f \in (-W,W) \\ {\bf f}_{Q-1} \in \left(-W,W\right)^{Q-1}}} \nonumber \\
  && \int_0^1 \bigg| \nabla_{{\bf f}_{Q-1}}\bigg\{ 
        \Gamma^{(Q)}_{m, {\bf m}_Q}\left( t {\bf f}_{Q-1}, f - t {\bf f}_{Q-1}^T {\bf 1}_{Q-1} \right) \bigg\} \bigg|\ dt \ . \nonumber \\
  \label{eqn:gamma_deriv_max}
\end{eqnarray}
Then, using \eqref{eqn:abs_JJ_ub}, 
 \begin{eqnarray}
   \lefteqn{\left| T^{(i)}_{m,Q}(f) - T^{(i)}_{0,m,Q}(f) \right|  \leq} && \nonumber \\  
      && W^{Q-1} \ \Gamma^{(Q)'}_{m,*} \sum_{ {\bf m}_Q = {\bf 1} }^M  \times \nonumber \\
      && \int_{-W}^W \bigg| V_{m_Q}\left( f - {\bf f}_{Q-1}^T {\bf 1}_{Q-1} \right) \prod_{j=1}^{Q-1} V_{m_j}(f_j) \bigg| \ \ d{\bf f}_{Q-1} \ , \nonumber \\
      &=& W^{Q-1} \ \Gamma^{(Q)'}_{m,*} M^Q \ J_B(W,Q,M) \ , \nonumber \\
 &=&  B_{m,M,Q}(W,\Gamma_{m,*}^{(Q)'}) \ .
 \end{eqnarray}
\comment
{
Here
\begin{eqnarray}
  \lefteqn{D_{Q,*}    =} && \nonumber \\
&& \sup\limits_{\substack{{\bf f}_{Q-1} \in (-W,W)^{Q-1} \\ f \in (-W,W) \\ {\bf m}_Q \in \left\{ 1, 2, \ldots, M\right\}^Q} } \nonumber \\
&& \int_0^1 \bigg| \nabla_{{\bf f}_{Q-1}}\bigg\{ 
        \Gamma^{(Q)}_{m, {\bf m}_Q}\left( t {\bf f}_{Q-1}, f - t {\bf f}_{Q-1}^T {\bf 1}_{Q-1} \right) \times \nonumber \\
&& V_{m_Q}\left( f - t {\bf f}_{Q-1}^T {\bf 1}_{Q-1} \right) \bigg\} \bigg| \ dt . \nonumber \\
\end{eqnarray}
Note that $D_{1,*} = 0$.
} 

 \bibliographystyle{IEEEtran}
 \bibliography{qmt,cs,sc_bib,sc_introneurorefs} 

\end{document}